\documentclass[11pt]{article}
\usepackage{amsmath,amsthm}
\usepackage{amssymb,mathrsfs}
\usepackage{bm,mathtools}
\usepackage{graphics}
\usepackage{tikz}
\usepackage{float}
\usepackage{enumitem}
\usepackage{array}
\usetikzlibrary{calc}
\usepackage{xcolor}

\usepackage[normalem]{ulem}

\usepackage{geometry}
\usepackage[colorlinks=true,
linkcolor=blue,citecolor=blue,
urlcolor=blue]{hyperref}

\makeatletter
\def\@seccntDot{.}
\def\@seccntformat#1{\csname the#1\endcsname\@seccntDot\hskip 0.5em}
\renewcommand\section{\@startsection{section}{1}{\z@}%
	{18\p@ \@plus 6\p@ \@minus 3\p@}%
	{9\p@ \@plus 6\p@ \@minus 3\p@}%
	{\LARGE\bfseries\boldmath}}
\renewcommand\subsection{\@startsection{subsection}{2}{\z@}%
	{12\p@ \@plus 6\p@ \@minus 3\p@}%
	{3\p@ \@plus 6\p@ \@minus 3\p@}%
	{\bfseries\boldmath}}
\renewcommand\subsubsection{\@startsection{subsubsection}{3}{\z@}%
	{12\p@ \@plus 6\p@ \@minus 3\p@}%
	{\p@}%
	{\bfseries\boldmath}}
\makeatother

\renewenvironment{proof}{\noindent\textbf{Proof.}}{\hfill $\blacksquare$\par}

\usepackage{microtype}
\usepackage{float}

\newcommand{\keywords}[1]{%
	\par\medskip
	\noindent{\small\textbf{Keywords:} #1\par}
	\medskip
}

\theoremstyle{plain}
\newtheorem{theorem}{Theorem}[section]
\newtheorem{lemma}[theorem]{Lemma}

\newtheorem{proposition}[theorem]{Proposition}
\newtheorem{problem}[theorem]{Problem}

\newtheorem{Orientation}{Orientation}
\newtheorem{observation}{Observation}
\newtheorem{claim}{Claim}

\theoremstyle{definition}
\newtheorem{definition}[theorem]{Definition}

\numberwithin{equation}{section}
\allowdisplaybreaks
\title{\bf An improved upper bound on the oriented diameter of graphs with diameter $4$}

\author{
	Jifu Lin\thanks{Department of Mathematics, East China Normal University, Shanghai 200241, China, e-mail: {\tt jifulin01@163.com}.}
	\and 
	Xiaolin Wang\thanks{School of Mathematics and Statistics, Fuzhou University, Fuzhou 350108, China, e-mail: {\tt  xiaolinw@fzu.edu.cn}.}
	\and
	Lihua You\thanks{Corresponding author. School of Mathematical Sciences, South China Normal University, Guangzhou 510631, China,
		e-mail: {\tt ylhua@scnu.edu.cn}.}
}

\date{}

\makeatletter
\def\leftharpoonfill@{\arrowfill@\leftharpoonup\relbar\relbar}
\def\rightharpoonfill@{\arrowfill@\relbar\relbar\rightharpoonup}
\newcommand\rbjt{\mathpalette{\overarrow@\rightharpoonfill@}}
\newcommand\lbjt{\mathpalette{\overarrow@\leftharpoonfill@}}
\makeatother

\begin{document}
	\maketitle		
	
	\begin{abstract}
		Let $f(d)$ be the smallest value for which every bridgeless graph
		$G$ with diameter $d$ admits a strong orientation $\rbjt{G}$ such that the diameter of $\rbjt{G}$ is at most $f(d)$. Chv\'atal and Thomassen (JCTB, 1978) established general bounds for $f(d)$, and also  proved that $f(2)=6$ and $f(4)\geq 12$. The works of both Kwok, Liu and West (JCTB, 2010) and Wang and Chen (JCTB, 2022) together determined $f(3)=9$. In this paper, we improve the best known upper bound for $f(4)$ from $21$ (Babu et al., DAM, 2021) to \textbf{$18$}.
	\end{abstract}
	
	\keywords{Diameter; Orientation; Oriented diameter; Edge girth}
	
	\section{Introduction}\label{sec-intro}
	
	Throughout  this paper, let $G=(V(G),E(G))$ be a connected bridgeless multigraph  without loops. 
	For any $u,v \in V(G)$, the distance $d(u,v)$ is the length of a shortest $(u,v)$-path in $G$, and the \emph{diameter} of $G$ is defined as $d(G)=\max\{d(u,v)\mid u,v\in V(G)\}$. A \emph{bridge} in a graph $ G $ is an edge whose deletion results in a disconnected graph. A graph  is called \emph{bridgeless} if it contains no bridges.
	
	An \emph{orientation} $\rbjt{G}$ of $G$ is a digraph obtained from $G$ by assigning a direction to each edge.  We say an orientation $\rbjt{G}$ is \emph{strong} if there exists a directed path between any two vertices in $\rbjt{G}$. The \emph{directed distance} $\partial(u,v)$ is the length of the shortest directed path from $u$ to $v$ in $\rbjt{G}$. If $\rbjt{G}$ is strong, then the diameter of $\rbjt{G}$ is defined as $d(\rbjt{G})=\max\{\partial(u,v)|u,v\in V(\rbjt{G})\}$. 
	
	In 1939, Robbins  \cite{HR} established  a well-known theorem:  a connected undirected graph admits a strong orientation if and only if it is bridgeless. Then the \emph{oriented diameter} of a bridgeless graph $G$ is defined by $$\rbjt{diam}(G)=\min\{d(\rbjt{G})\mid  \rbjt{G} \text{ is a strong orientation of }G\}.$$ 
	
	In recent decades, numerous researchers have studied the oriented diameter and sought sharp upper bounds for various prescribed graph parameters and special graph classes.
	Let $\gamma(G)$ denote the \emph{domination number} of $G$.
	Fomin et al. \cite{FF} first established an upper bound on the oriented diameter $\rbjt{\mathrm{diam}}(G)$ of a graph $G$, proving $\rbjt{\mathrm{diam}}(G)\leq 9\gamma(G)-5$. Subsequently, Kurz and L\"{a}tsch \cite{SK} improved this bound to $\rbjt{\mathrm{diam}}(G)\leq 4\gamma(G)$, and conjectured that $\rbjt{\mathrm{diam}}(G)\leq\left\lceil \frac{7\gamma(G)+1}{2} \right\rceil$. This conjecture was later confirmed by Wang and Chen \cite{WC}.  
	In terms of degree conditions, Dankelmann et al. \cite{PD} gave the tight bound $\rbjt{\mathrm{diam}}(G) \leq n - \Delta + 3$, where $\Delta$ denotes the maximum degree of $G$. For the minimum degree $\delta$, Bau and Dankelmann \cite{SB} derived $\rbjt{\mathrm{diam}}(G) \leq 11n/(\delta + 1) +9$, and Surmacs \cite{MS} further improved this result to $\rbjt{\mathrm{diam}}(G) < 7n/(\delta + 1)$. For other parameters, see 
	\cite{BC, RL}. 
	
	For special graph classes, Gutin \cite{GG} and Plesník \cite{JP} showed that the oriented diameter of any complete $k$-partite graph with $k\geq 3$ lies between $2$ and $4$, while \v{S}olt\'es \cite{LS} proved that the oriented diameter of every complete bipartite graph is at most $4$. Wang and Chen \cite{XW2} showed that all but four maximal outerplanar graphs of order $n \geq 3$ satisfy $\rbjt{\mathrm{diam}}(G) \leq \left\lceil \frac{n}{2} \right\rceil$.
	Some related research on graph oriented diameter is summarized in the survey \cite{KK}.
	A mixed graph is a graph that contains both undirected and directed edges. Related results on the oriented diameter of mixed graphs can be found in \cite{BD2,FR,ZD,HL1,HL2,HL3}.

	The most direct and interesting problem is: given the diameter of $G$, how small can its oriented diameter be?
	Moreover,
	let $f(d)$ be the smallest value for which every bridgeless graph $G$ with $d(G)=d$ admits a strong orientation $\rbjt{G}$ satisfying $d(\rbjt{G}) \leq f(d)$. In 1978, Chvátal and Thomassen \cite{VC} introduced $f(d)$, confirmed its existence, and obtained the following result. 
	
	\begin{theorem}[Chvátal and Thomassen \cite{VC}]\label{t1}
		Let $G$ be a bridgeless graph with $d(G)=d$. Then $$\frac12 d^2+d\leq f(d)\leq 2d^2+2d.$$
	\end{theorem}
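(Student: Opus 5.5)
Theorem~\ref{t1} has two halves: an explicit construction for the lower bound and a general orientation procedure for the upper bound. For the lower bound $f(d)\ge \frac12 d^2+d$, I would build a bridgeless graph of diameter $d$ in which every strong orientation has some pair at directed distance at least $\frac12 d^2+d$. The natural gadget is a hub vertex $v$ with many cycles hanging off it, arranged as a tree-like structure of short cycles. Take a ``tree of triangles'' or a ``tree of cycles'' of depth about $d/2$, with the leaves' distance from the root at most $d/2$, so that the undirected diameter is $d$.

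In any strong orientation, each cycle of length $k$ must be oriented cyclically, since the cycles are edge-disjoint and meet at cut vertices. Going ``against'' the direction along a cycle then costs $k-1$ instead of $1$. I would choose the cycle lengths along a root-to-leaf path so that the radius constraint is met while the detour costs sum to roughly $d^2/2$. For example, on level $i$ use cycles of length $2i+1$ (radius $i$), nested so the undirected depth is about $d/2$. A pigeonhole argument then applies: by taking many branches, some branch is oriented ``badly'' relative to a chosen opposite branch. The directed distance between two such leaves is then a sum of $(\text{length}-\text{radius})$ terms, which is about $\sum_{i\le d/2}(\text{something linear in } i)$, and that evaluates to $\frac12 d^2+d$. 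I would tune the cycle lengths exactly so that the parity works out.

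For the upper bound $f(d)\le 2d^2+2d$, I would grow a strongly oriented subgraph $H$ greedily. Start with $H$ consisting of a single vertex and maintain the invariant that every vertex of $H$ reaches and is reached from a root $r$ within a controlled bound. While some vertex lies outside $H$, pick a vertex $u$ at maximum distance from $H$. Using bridgelessness, choose a shortest cycle through an edge of a path from $H$ to $u$; this yields an ear (a path or cycle) attached to $H$ of length at most $2d+1$. Orient it as a directed ear. The key step is to show that every vertex in $G$ is within distance $d$ of $H$, and that each ear is short. A cleaner route is a distance-layering argument: first take a vertex $r$ and orient so that every vertex within undirected distance $k$ of $r$ has directed distance at most $k(d+1)$-ish to and from $r$. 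Induct on $k$, where each new layer is attached by an edge plus a return path through a shortest cycle containing that edge, of length at most $2d+1$. This gives $\partial(r,x),\partial(x,r)\le d\cdot(d+1)$, hence $d(\vec G)\le 2d(d+1)=2d^2+2d$.

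The main obstacle is the inductive step of the upper bound. When orienting the ear for a new edge $xy$ (with $x$ at level $k$ and $y$ at level $k+1$), the edges of the shortest cycle through $xy$ may already be oriented, or the cycle may dip far from $r$. I would handle this by bounding only the portion of the cycle outside the current oriented set. Then I would show via the triangle inequality and the bridge-free condition (the edge $xy$ lies on a cycle of length at most $2d+1$) that each layer adds at most $d+1$ to both directed distances, and that previously oriented edges are never reversed.
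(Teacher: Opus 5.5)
The paper only quotes this theorem from Chv\'atal--Thomassen, so your sketch has to stand on its own. Its upper-bound half has a genuine gap, and the invariant you propose is false. $C_{2d+1}$ is bridgeless of diameter $d$, and every strong orientation of it is a directed cycle. So the predecessor of the root is at undirected distance $1$ but at directed distance $2d$ from the root, which exceeds $d+1$ for $d\ge 2$. With ears of length up to $2d+1$ at every layer, your step can only certify a cost of $2d$ per layer, i.e.\ $2d^2$ in each direction and $4d^2$ overall. Two ideas are missing.

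\emph{First, the ears must shrink with depth.} Once every vertex within distance $k-1$ of the root lies in the oriented part $H$, contract $V(H)$ to one vertex $h$. The result is still bridgeless and $h$ has eccentricity at most $d-k+1$. So the next ears have length at most $2(d-k)+3$ and cost at most $2(d-k+1)$. The sum $\sum_{k=1}^{d}2(d-k+1)=d^2+d$ is the real source of the bound; with the radius in place of $d$ it is the Chv\'atal--Thomassen bound $r^2+r$ on the oriented radius.

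\emph{Second, and this is the real content, a whole layer must be absorbed at that cost even though the ears overlap.} An ear that stops at a vertex absorbed earlier in the same round inherits that vertex's already inflated distance, so the bounds compound. ``Bounding only the portion outside the current oriented set'' does nothing against this. One way to make a round work, with $s$ the eccentricity of $h$, is to add ears one at a time under a stronger invariant: every absorbed $x$ has $\partial(h,x)+\partial(x,h)\le 2s+1$ and $\min\{\partial(h,x),\partial(x,h)\}=d(h,x)$. For a new neighbour $y$ of $h$:
\begin{enumerate}
\item take a BFS tree at $h$ containing $hy$;
\item go down the tree from $y$, cross a non-tree edge leaving the subtree of $y$, and climb back to $h$ (a cycle of length at most $2s+1$);
\item stop at the first already-absorbed vertex $w$, and orient this ear into $w$ if $\partial(w,h)=d(w,h)$ and out of $w$ otherwise.
\end{enumerate}

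Your lower-bound gadget (odd cycles as blocks, each forced to be a directed cycle) is the right one, but the count as written is off. Summing (length $-$ radius) over cycles of lengths $2i+1$ gives only about $d^2/4$. The next gadget has to hang at a \emph{neighbour} of the root on its cycle: that costs one unit of undirected depth but $2i$ in the wrong direction.

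Concretely, let $T_0$ be a vertex, and let $T_k$ be a $(2k+1)$-cycle through a root $v$ with a copy of $T_{k-j}$ hung at each cycle vertex at cycle-distance $j$ from $v$. Then $T_k$ has diameter $2k$. Every strong orientation of $T_k$ has vertices $y,y'$ with $\partial(v,y),\partial(y',v)\ge k^2+k$, since the two neighbours of $v$ on the cycle play symmetric roles. So two copies glued at $v$ give $f(2k)\ge 2k^2+2k$, with no pigeonhole needed.

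For odd $d=2k+1$ you cannot simply ``tune parity'' by using $T_{k+1}$ on one side, since for $k\ge 1$ $T_{k+1}$ alone already has diameter $2k+2$. Instead, glue $T_k$ at $v$ to a $(2k+3)$-cycle through $v$ carrying one copy of $T_k$ at a single neighbour of $v$. This branch has diameter $2k+1$. It forces directed distance $(k+1)^2+(k+1)$ either to or from $v$, depending on how its cycle is oriented. The other copy of $T_k$ supplies $k^2+k$ in the opposite direction, giving $2k^2+4k+2\ge \frac12 d^2+d$.
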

	
	Recently, Babu et al. \cite{BD}  improved the upper bound to $f(d)\leq1.373d^2 +6.971d-1$, which is strictly smaller than $2d^2+2d$ for all integers $d\geq 8$. Since it is difficult to derive a general bound for $f(d)$, it is worthwhile to find the exact value of $f(d)$ for small $d$.
	In 1980, Boesch and Tindell \cite{FB} showed that $f(1)=3$.
	Chvátal and Thomassen \cite{VC}  proved that $f(2)=6$. 
	Kwok et al. \cite{PK} established $9\le f(3)\le 11$ in 2010; subsequently, Wang and Chen \cite{XW} further determined the exact value $f(3)=9$ in 2022.
	By Theorem \ref{t1},  $12\leq f(4)\leq 40$. Babu et al. \cite{BD} improved its upper bound to 
	$f(4)\leq 21$. In this paper, we prove the following main result.
	
	\begin{theorem}\label{t7}
		$f(4)\leq 18$.
	\end{theorem}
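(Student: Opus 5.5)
We follow the standard Chvátal--Thomassen framework, refined through the notion of \emph{edge girth} (the keywords suggest this). Fix a bridgeless graph $G$ with $d(G)=4$ and a vertex $v$. We grow a strongly oriented subgraph $H$ containing $v$ that every vertex can reach, and that can reach every vertex, within few steps. Any vertex $x$ has $d(x,v)\le 4$, so it lies at distance at most $4$ from $v$. The idea is to orient $H$ so that every $u\in V(H)$ satisfies $\partial(u,v)\le r$ and $\partial(v,u)\le r$. Then the remaining vertices are attached through short oriented cycles so that each of them reaches $H$, and is reached from $H$, within a few extra steps. The final bound is assembled as $\partial(x,y)\le \partial(x,v)+\partial(v,y)$, and we aim for each summand to be at most $9$, giving $18$.

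The key local tool is an edge-girth lemma, proved first. In a bridgeless graph of diameter $d$, every edge lies on a cycle of length at most $2d+1$. If an edge $e=ab$ lay on no short cycle, then $G-e$ would have $d_{G-e}(a,b)\ge 2d+1$, and a distance argument on a vertex near the middle of a shortest $a$--$b$ path in $G-e$ contradicts $d(G)=d$. For $d=4$ this gives cycles of length at most $9$. We will also need a sharper bound for edges between consecutive distance layers $N_i(v)$ and $N_{i+1}(v)$.

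Now build $H$ layer by layer. Start with $H_0=\{v\}$. Let $N_i=\{x: d(x,v)=i\}$. Each edge from $v$ to $N_1$ is covered by a short cycle, and we orient these cycles. Processing edges in BFS order, we add for each not-yet-covered vertex $x$ in $N_{i}$ a shortest cycle through an edge joining $x$ to $N_{i-1}\cap V(H)$, and orient it consistently with the part already in $H$. The union of these oriented ears stays strong, since each is an ear attached to a strong digraph. The bookkeeping shows that a vertex $x\in N_i$ lies on an ear of length $\ell\le 2i+1$ or so, hence it reaches the previous layer's structure, and is reached from it, within about $\ell-1$ steps. This yields $\partial(x,v),\partial(v,x)\le g(i)$ recursively. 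With only the crude ear length $9$ this recursion gives roughly the bound $21$ of Babu et al.

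To improve this to $9$ per direction, we split according to whether $N_3\cup N_4$ is covered with short ears. We choose $v$ carefully, for instance as a centre, or exploit that vertices in $N_4$ have all neighbours in $N_3\cup N_4$. We then attach each far vertex either to an already oriented ear through a triangle or a $4$-cycle, or show that its ear must pass close to $v$. A case analysis on the length of the shortest cycle through the edge joining $x$ to its parent layer, combined with the Kwok--Liu--West and Wang--Chen technique of orienting along a $(u,v)$-path where both endpoints sit in the same layer, should cap the return distances at $9$.

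The main obstacle is this last case analysis. One must check that the ears attaching $N_3$ and $N_4$ never force a vertex to travel along a long ear in the wrong direction twice. It must also be verified that the simultaneous orientation choices for overlapping ears remain consistent. This is where a careful structural lemma is needed, bounding the edge girth of edges incident to $N_4$, for example cycles of length at most $7$ or a special configuration. I would first try to prove that each edge between $N_3$ and $N_4$ lies on a cycle of length at most $8$ contained in $N_2\cup N_3\cup N_4$, and then treat the exceptional configurations separately.
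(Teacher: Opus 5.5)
Your proposal has a genuine gap: the step that is supposed to bring the bound from about $21$ down to $18$ is never carried out. You say that a case analysis on $N_3,N_4$, combined with the Kwok--Liu--West/Wang--Chen technique, ``should cap the return distances at $9$'', and that you ``would first try'' to prove an auxiliary lemma on cycles through $N_3$--$N_4$ edges. That case analysis is the whole content of the theorem, and nothing in your sketch shows it can be done.

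The ear accounting, as it stands, does not fit the budget. If a vertex of $N_4$ lies on an ear closing a cycle of length up to $9$, it may need $8$ steps along that ear just to reach the previously built structure. That leaves one step of your budget of $9$ for reaching $v$. To rescue this you would have to prove that long ears only occur when they pass adjacent to $v$, and you neither prove nor precisely formulate such a statement. Your proposed lemma (cycles of length at most $8$ inside $N_2\cup N_3\cup N_4$) is unproved, not obviously true, and would not fix this arithmetic anyway. Note also that the paper itself does not get a uniform per-vertex bound of $9$ in both directions from one root. For vertices of $B$ and $J'$ its bound on $\partial(w,u)$ is $9$ and $11$, and for $I'$ its bound on $\partial(v,w)$ reaches $g^*+5$. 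Those pairs are closed only by pair-specific arguments that use $d(x,y)\le 4$ directly (Claim~\ref{cla19}).

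The missing idea is the case split on the \emph{global} edge girth $g^*(G)=\max_e l_G(e)$. You use ``edge girth'' only in the sense that every edge lies on a cycle of length at most $2d+1$. Lin and You showed $f(4)=\max_{g^*}F(4,g^*)$, and Theorem~\ref{t5} already gives $F(4,g^*)\le 13$ for $g^*\in\{2,3,6,7,8,9\}$. So only $g^*\in\{4,5\}$ remains, and in that regime \emph{every} edge lies on a cycle of length at most $5$. That is what makes all attachments short. The paper then proceeds as follows:
\begin{itemize}
\item It roots the orientation at an edge $uv$ with $l_G(uv)=g^*$, oriented $u\to v$, rather than at a vertex.
\item It partitions the vertices by the pair $(d(w,u),d(w,v))$ into the sets $S_{i,j}$ and their refinements.
\item It orients so that $\partial(x,u)\le g^*+3$ and $\partial(v,y)\le 9$ for most vertices, giving $\partial(x,y)\le g^*+13$ via $\partial(u,v)=1$.
\item It treats the remaining pairs involving $B\cup J$ and $I'\cup J'$ by direct case analysis.
\end{itemize}
This yields $F(4,4)\le 17$ and $F(4,5)\le 18$, hence $f(4)\le 18$. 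Your vertex-rooted BFS-ear scheme uses neither the $g^*$ dichotomy nor the existing bounds for the other values of $g^*$. Without them, nothing in your argument forces the ears deep in the layering to be short.
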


	Our proof is based on the previous results of Lin and You \cite{LY}.  They provided the following definition as an important tool to study $f(d)$.
	
	\begin{definition}[Lin and You  \cite{LY}]\label{def1}
		Let $G$ be a bridgeless graph and $e\in E(G)$. The \emph{edge girth} of $e$ is the length of a shortest cycle containing $e$ in $G$, denoted by $l_{G}(e)$, and the \emph{edge girth} of $G$ is defined as $ g^*(G) = \max \left\{ l_{G}(e) \mid e \in E(G) \right\}.$
	\end{definition}
	
	Using Definition \ref{def1}, Lin and You \cite{LY} introduced the notation:
	$$F(d,g^*) = \max\{\rbjt{\text{diam}}(G) \mid G \text{ is a bridgeless graph with } d(G) = d , g^*(G) = g^*\},$$
	and showed 
	$f(d) = \max\{F(d,g^*) \mid 2 \leq g^* \leq 2d+1\}$. 
	They also proved the following result.
	
	\begin{theorem}[Lin and You  \cite{LY}]\label{t5}
		$F(4,2)=4$, $F(4,9)=12$, $F(4,3)\leq 12$, $F(4,g^*)\leq 13$ for $g^*\in \{6,7,8\}$. 
	\end{theorem}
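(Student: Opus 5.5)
The statement bundles four claims, and I would prove them separately. Three tools are shared: a root vertex $r$, the distance layers $N_i=\{x: d(r,x)=i\}$ for $0\le i\le 4$, and the defining property of $g^*$, namely that every edge lies on a cycle of length at most $g^*$. Throughout, I would aim for orientations in which $\partial(x,r)\le b_i$ and $\partial(r,x)\le a_i$ for every $x\in N_i$, with $r$ chosen suitably. The diameter bound then follows from $\partial(x,y)\le \partial(x,r)+\partial(r,y)$, sometimes refined by routing through a central directed cycle instead of the single vertex $r$.

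\emph{$F(4,2)=4$.} If $g^*=2$, every edge lies on a $2$-cycle, i.e.\ has a parallel copy. I would orient each pair of parallel edges in opposite directions, so that every undirected path becomes a directed path in both directions. Hence $\partial(u,v)\le d(u,v)\le 4$. Equality is attained by a path on five vertices with every edge doubled, which has diameter $4$, edge girth $2$ and oriented diameter $4$.

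\emph{$F(4,3)\le 12$.} Every edge lies on a triangle. I would orient layer by layer, outward from $r$. For $x\in N_i$ with parent $p\in N_{i-1}$, the edge $px$ lies on a triangle $pxz$ with $z\in N_{i-1}\cup N_i$. I would orient this triangle cyclically so that one of $x\to p$ and $p\to x$ is a direct arc and the other is a path of length $2$. The direction is chosen alternately by parity of the layer, and conflicts with earlier choices are resolved using the already oriented triangle at $p$. This should give $a_i+b_i\le 3i$ with $a_4,b_4\le 6$, hence $\partial(x,y)\le 12$. The delicate part is vertices whose triangle edge $pz$ or $xz$ was already forced by another vertex. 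I expect to handle these with a short case check, showing that the extra detour never exceeds the slack of $1$ per layer.

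\emph{$F(4,9)=12$.} For the lower bound, I would check that the Chv\'atal--Thomassen extremal graph for $f(4)\ge 12$ can be taken to contain an edge whose shortest cycle has length $9=2d+1$; a suitable variant is a $9$-cycle with pendant structures of radius $\le 4$ attached. For the upper bound, fix an edge $e$ with $l_G(e)=9$ and a shortest cycle $C$ through $e$. Since $d(G)=4$, the cycle $C$ is nearly isometric, and every vertex lies within distance at most $2$ of $C$; otherwise a shortcut would create a shorter cycle through $e$. I would orient $C$ as a directed $9$-cycle. Each vertex off $C$ is then attached through short cycles whose edges must largely overlap $C$, and for such $x$ I would show $\partial(x,C)+\partial(C,x)$ is small. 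Summing gives $\partial(x,y)\le \partial(x,C)+8\cdot(\text{fraction of }C)+\partial(C,y)\le 12$, after a case analysis on the positions of the entry and exit points on $C$.

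\emph{$F(4,g^*)\le 13$ for $g^*\in\{6,7,8\}$.} This is where I expect the main obstacle. I would combine the two methods above. Take an edge $e$ with $l_G(e)=g^*$ and its shortest cycle $C$, orient $C$ cyclically, and treat $C$ as a thick root. For vertices at distance $j$ from $C$, I would orient the remaining edges layer by layer, using for each edge to a parent a cycle of length $\le g^*$ that meets the previously oriented part. The goal is to keep $\partial(x,C)+\partial(C,x)$ small. The hard part is controlling how long these cycles can be while still certifying $\partial(x,C)$ and $\partial(C,x)$ within budget. To bound the total by $13$, I would first try exploiting that a long shortest cycle $C$ forces vertices to be close to $C$ (because $d(G)=4$), and then do separate case analyses for $g^*=6,7,8$ according to the distance from $x$ to $C$.
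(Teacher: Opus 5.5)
The paper quotes this theorem from Lin and You and gives no proof of it here, so your sketch has to stand on its own. Only the part $F(4,2)=4$ is actually proved. That argument is correct: orient each pair of parallel edges oppositely to get $\partial(x,y)=d(x,y)$; every orientation has diameter at least $d(G)$; and the doubled path on five vertices shows the class is nonempty.

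\textbf{The case $F(4,3)\le 12$.} The step that fails is the passage from the per-vertex invariant $a(x)+b(x)\le 3i$ to $a_4,b_4\le 6$.
\begin{itemize}
\item Your alternating pattern $1+2+1+2=6$ has no slack at all.
\item Triangles are shared. If two children $x,z$ of $p$ in the same layer must both use the triangle $pxz$, a cyclic orientation gives the direct arc out of $p$ to exactly one of them, and the other receives the reversed pattern.
\item This already happens in $K_4$ rooted at $r$. Putting $ra,rb,rc$ on directed triangles needs two cyclic triangles through $r$, since a $4$-vertex tournament has at most two. In each, $r$ has one in-arc and one out-arc, so the shared edge points opposite to the other two. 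The same count shows some edge of $K_4$ lies on no directed triangle in any orientation.
\item When $pz$ and $xz$ were already forced by earlier choices, the parent edge may lie on no directed triangle at all. So even $a(x)+b(x)\le 3i$ is not secured.
\item If reversals accumulate along different branches, your bounds give only $a(y)\le 8$ for one vertex and $b(x)\le 8$ for another. Then $\partial(x,y)\le b(x)+a(y)$ yields $16$, not $12$.
\end{itemize}
The deferred ``short case check'' is therefore the entire proof, and nothing in the proposal shows it can succeed.

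\textbf{The lower bound $F(4,9)\ge 12$.} The remaining parts are plans rather than proofs. You need an explicit graph with $d(G)=4$ and $g^*(G)=9$, plus an argument that \emph{every} strong orientation has diameter at least $12$. Attaching a $9$-cycle to the Chv\'atal--Thomassen graph at one vertex preserves the oriented-diameter bound but pushes the diameter above $4$. Denser attachments may create shortcuts that lower it.

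\textbf{The upper bound $F(4,9)\le 12$.} Your claim that every vertex lies within distance $2$ of $C$ is not justified by the shortcut argument. In the paper's notation, $l_G(uv)=9$ forces $S_{k,k}=\emptyset$ for $k\le 3$, and $uv$ is the only edge joining $\bigcup_i S_{i,i+1}$ to $\bigcup_i S_{i+1,i}$. For $w\in S_{3,4}$, a shortest $(w,u)$-path followed by $uv$ is a shortest $(w,v)$-path. So the geodesics from $w$ yield no cycle through $e$ and no contradiction.

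Even granting the claim, the arc along the directed $9$-cycle can cost $8$, leaving $4$ for leaving $x$ and reaching $y$. A vertex at distance $2$ from $C$ hangs on cycles of length up to $9$, so one of $\partial(x,C),\partial(C,x)$ can far exceed $2$. You give no rule forcing the arc to be short in exactly those cases, and that trade-off is the whole content.

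\textbf{The case $g^*\in\{6,7,8\}$.} Nothing in the sketch bounds $\partial(x,C)$ or $\partial(C,x)$. Each attachment through a cycle of length $g^*$ may cost $g^*-1$ in one direction, and you have no mechanism stopping these costs from accumulating. Even for $g^*\in\{4,5\}$, this paper needs a long partition-based case analysis to reach $g^*+13$.
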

	
	In \cite{BD}, Babu et al. proved $f(4)\leq 21$, which implies $\max\{F(4,4),F(4,5)\}\leq 21$. Based on Theorem \ref{t5}, Lin and You \cite{LY} proposed the following problem.
	
	\begin{problem}[Lin and You  \cite{LY}]\label{prob1}
		Provide improved upper bounds for $F(4,4)$ and $F(4,5)$.
	\end{problem}	
	
	To address Problem~\ref{prob1}, we investigate bridgeless graphs $G$ with $d(G)=4$ and $g^*(G)\in\{4,5\}$. Our main result is stated below.
	
	\begin{theorem}\label{t6}
		Let $G$ be a bridgeless graph with $d(G)=4$ and $g^*(G)\in \{4,5\}$. Then $\rbjt{diam}({G})\leq g^*(G)+13$.
	\end{theorem}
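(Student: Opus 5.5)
The plan is the standard ``grow a strongly oriented core, then attach the remaining vertices by short ears'' strategy, with the core chosen as a shortest cycle through a well-chosen edge, so that the edge girth $g^*=g^*(G)\in\{4,5\}$ enters additively. Let $k=g^*(G)$ and fix an edge $e_0=uv$ with $l_G(e_0)=k$. Let $C$ be a shortest cycle through $e_0$, so $|C|=k$, and orient $C$ as a directed cycle; any two vertices of $C$ are then within directed distance $k-1$. Next define the layers $N_i=\{x: d(x,V(C))=i\}$. Since $d(G)=4$ and every vertex is within distance $4$ of $u$, each $N_i$ with $i\ge 3$ is essentially controlled by the diameter; indeed, because $C$ has length $k\ge 4$, every vertex lies within distance at most $4$ of $C$, and usually at most $3$.

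The main step is to orient the edges outside $C$ so that every vertex $x$ satisfies $\partial(x,C)\le a$ and $\partial(C,x)\le b$ with $a+b$ small. For any $x,y$ this gives
\[
\partial(x,y)\le \partial(x,C)+(k-1)+\partial(C,y)\le a+b+k-1.
\]
The target $k+13$ therefore needs $a+b\le 14$, e.g.\ $a=b=7$.

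To obtain such an orientation I proceed layer by layer, as in Chv\'atal--Thomassen and Lin--You. For each edge $xy$ with $x\in N_i$, $y\in N_{i+1}$, bridgelessness and the hypothesis $l_G(xy)\le k\le 5$ give a cycle of length at most $5$ through $xy$. Every edge of $G$ lies on a cycle of length at most $5$, which is the key gain over general diameter-$4$ graphs, where edge girths up to $9$ occur. I process the vertices in breadth-first order from $C$. Whenever a vertex $y$ is not yet reached in both directions, I take a shortest cycle $D$ through an edge joining $y$ to the already oriented part $H$. $D$ has length at most $5$, and I orient the maximal subpath of $D$ outside $H$ as a directed ear. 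Each ear has length at most $4$ and starts and ends in $H$ at vertices of depth at most one less. Inductively, a vertex at depth $i$ then has in- and out-distance to $C$ bounded by roughly $i+3$ or so.

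Since depth is at most $4$, this crude induction would give $a,b\approx 7$ only if ears do not stack badly. The layer $N_4$, and vertices whose ears land in $N_3$, need care. For these I would use the diameter condition via the vertex $u$ (or the antipodal structure) to show that such vertices are adjacent to, or lie on short cycles with, lower layers, so that their ears end at depth at most $2$.

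The main obstacle is exactly this depth accounting for the deepest layers: making sure that ears attached at depth $3$ or $4$ do not compound to give in- plus out-distance beyond $14$. I would first try to show that vertices at distance $\ge 3$ from $C$ occur only in constrained configurations. Intuitively, since $C$ has length $4$ or $5$ and $d(G)=4$, a vertex far from $C$ is within $4$ of every vertex of $C$, which forces near-equal distances. I would then treat them via a separate case analysis, possibly choosing $e_0$ or $C$ differently (e.g.\ $C$ minimizing the eccentricity sum) to shrink the far layers. If needed, I would split into the cases $k=4$ and $k=5$, using in the $k=5$ case that a $5$-cycle dominates more of the graph within distance $3$.
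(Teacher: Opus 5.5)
There is a genuine gap. Your whole argument rests on producing an orientation with $\partial(x,C)\le 7$ and $\partial(C,x)\le 7$ for \emph{every} vertex $x$. The layer-by-layer ear procedure you describe does not deliver this, for two concrete reasons.

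First, the ear through $y$ need not end at depth $\le i-1$. In breadth-first order, earlier ears (taken along cycles of length up to $5$) have already put vertices of depth $i$, $i+1$ and even $i+2$ into $H$. So the first vertex of $H$ met along a shortest cycle through $zy$ can be as deep as $y$ itself, or deeper. The recursion for $\partial(y,C)$ then refers to a vertex with no better bound than $y$. Also, an ear can have $5$ edges when the cycle meets $H$ only in $z$.

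Second, even when both ends $h_1,h_2$ are shallower, an ear $h_1\to\cdots\to y\to\cdots\to h_2$ with $\ell_1+\ell_2\le k$ edges gives only $\partial(C,y)\le \partial(C,h_1)+\ell_1$ and $\partial(y,C)\le \ell_2+\partial(h_2,C)$. Since $y$ is typically adjacent to its attaching vertex, one side can grow by $k-1$ per layer. Over four layers that gives bounds of order $4(k-1)$, not $i+3$. This accumulation is exactly why ear arguments of Chv\'atal--Thomassen type yield only quadratic bounds (for $d=4$, the bound $40$). Your last paragraph names the obstacle but only lists things you would try, so the key estimate is missing, not merely unverified.

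The paper's route shows how much more is needed. It partitions $V(G)$ by $(d(w,u),d(w,v))$, refines each class according to where neighbours lie, and uses $R$--$S$ orientations (Lemma~\ref{l1}). Short-cycle ears appear only for isolated leftover vertices. Even so, it does \emph{not} obtain uniform bounds: it only has $\partial(w,u)\le 11$ for $w\in J'_6$ and $\partial(v,w)\le 9$ for $w\in K'_9\cup I_{62}^{(33)}$. The analogue of your $a+b$ is therefore about $20$.

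The target $g^*+13$ is reached only in Claim~\ref{cla19}, where the bad pairs $(x,y)$ are handled individually using $d(x,y)\le 4$. Either the pair is shown to be impossible (e.g.\ $x\in B_8\cup B_9$, $y\in I_{62}^{(33)}$), or a shortest path $xx_1x_2x_3y$ is shown to begin with an arc $x\to x_1$ to a vertex with small $\partial(x_1,u)$, or to end with an arc $x_3\to y$ from a vertex with small $\partial(v,x_3)$.

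Your reduction $\partial(x,y)\le \partial(x,C)+(k-1)+\partial(C,y)$ discards precisely this pairwise diameter information. Completing your route would require a uniform oriented-radius bound around $C$ that the paper does not establish, and your proposal offers no mechanism for proving it.
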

	
	By Theorem \ref{t6}, we have  $F(4,4)\leq 17$ and $F(4,5)\leq 18$. Combining $f(d)=\max\{F(d,g^*)\mid 2\leq g^*\leq 2d+1\}$ and Theorem \ref{t5}, we obtain Theorem \ref{t7}.
	
	In the rest of this paper, we introduce some notations and some important lemmas in Section \ref{sec-pre}.
	The proof of Theorem \ref{t6} is postponed to Section \ref{sec-3}. 
	We propose several new orientation techniques and combine them with existing methods to investigate the oriented diameter of graphs with diameter 4. Our approach improves the best known upper bound from $21$ to $18$. The orientation techniques developed in the proof may also be useful in the study of related problems on oriented diameters.

	\section{Notations and Lemmas}\label{sec-pre}
	Since this paper deals with multigraphs, for any two vertices $u$ and $v$, the notation $uv$ denotes a specific edge between $u$ and $v$. 
	
	Let $u_1,u_2,\dots,u_k$ be distinct vertices. A \emph{directed path} is a sequence $u_1 u_2 \ldots u_k$ such that $u_1\rightarrow u_2\rightarrow\cdots \rightarrow u_k$; a \emph{directed cycle} is a sequence $u_1 u_2 \ldots u_ku_1$ such that $u_1\rightarrow u_2\rightarrow \cdots \rightarrow u_k\rightarrow u_1$. We say that $u_1u_2\cdots u_k$ is a \textit{bidirectional path} if either $u_1u_2\cdots u_k$ or $u_ku_{k-1}\cdots u_1$ is a directed path. Similarly, $u_1u_2\cdots u_ku_1$ is called a \textit{bidirectional cycle} if either $u_1u_2\cdots u_ku_1$ or $u_ku_{k-1}\dots u_1u_k$ is a directed cycle.
	
	Recall that a mixed graph is a graph containing both directed and undirected edges. In a mixed graph, a vertex $z$ is called \emph{undirected} if all edges incident to $z$ are undirected; otherwise, $z$ is called \emph{directed}. A cycle (path) $H$ in a mixed graph is called \emph{mixed} if each undirected edge of $H$ can be oriented to form a bidirectional cycle (path). Such an orientation is called a bidirectional-cycle orientation (respectively, a bidirectional-path orientation), abbreviated as a $B$-$C$ orientation (respectively, a $B$-$P$ orientation).

	Let $G$ be a bridgeless graph. For $S\subseteq V(G)$, we use $G[S]$ to denote the subgraph of $G$ induced by $S$. For $v\in V(G)$ and $U\subseteq V(G)$, let $N(v)=\{u\ |\ uv\in E(G)\}$ and $N(U)= \bigcup\limits_{v \in U} N(v) - U$. Let $U$ and $V$ be two subsets of $V(G)$. We use $[U,V]$ to denote the set of all edges between $U$ and $V$. If $U=\{u\}$ or $V=\{v\}$, we write $[u,V]$ or $[U,v]$ for short. The notation $U\rightarrow V$ means that every edge in $[U,V]$ is oriented from $U$ to $V$. We write $u\rightarrow V$ or $U\rightarrow v$ if $U=\{u\}$ or $V=\{v\}$. Let $ u \in V(G) $ and $ S \subseteq V(G) $ with $ u \notin S $. A $ (u, S) $-path is defined as the shortest path from the vertex $ u $ to any vertex in $ S $, and $ d(u, S) $ denotes the length of such a path. A directed $ (u, S) $-path is the shortest directed path from $ u $ to $ S $, and $ \partial(u, S) $ denotes the length of this directed path. Similarly, a directed $(S,u)$-path and its length $\partial(S,u)$ are also defined. Since $\partial(u,S)$ and $\partial(S,u)$ need not be equal, we define $ \theta(u, S) $ as the maximum of these two values, that is, $ \theta(u, S) = \max\{\partial(u, S), \partial(S, u)\} $. Similarly, we define $\theta(u,v)=\max\{\partial(u,v), \partial(v,u)\}$. 
	
	A special orientation was implicitly used by Kwok et al.\ in the proof of \cite[Lemma 3.4]{PK}. Wang and Chen~\cite{XW} later refined this orientation and introduced the term \emph{$R$--$S$ orientation}. We recall the definition below.
	\begin{definition}[\text{Kwok et al.} \cite{PK}, \text{Wang and Chen} \cite{XW}]\label{def2}
		Let $G$ be a graph, and let $ R $ and $ S $ be two disjoint vertex subsets of $G$ such that $ G[S] $ has no isolated vertex, and $ N(w) \cap R \neq \emptyset $ for every $ w \in S $. Let $F$ be a spanning forest of $G[S]$ with bipartition $(V_1,V_2)$. Orient all edges of $ [R, V_1] $, $ [V_1, V_2] $ and $ [V_2, R] $ as $ R \rightarrow V_1 \rightarrow V_2 \rightarrow R $. This orientation is called $R$-$S$ orientation $($see Figure \ref{fig4}$)$.
	\end{definition}
	
	\begin{figure}[ht]
		\centering
		\includegraphics[scale=0.18]{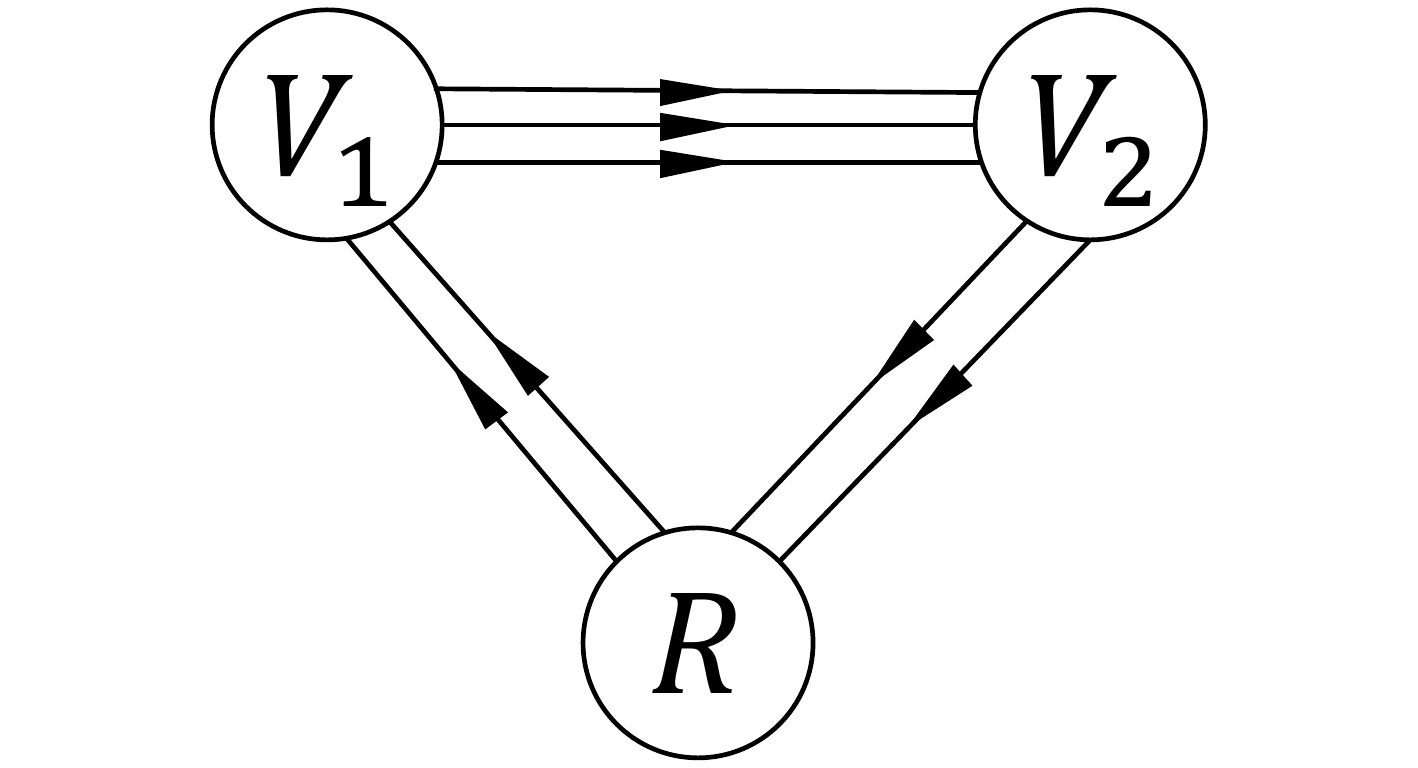}
		\vspace{-10pt}
		\caption{$R$-$S$ orientation.}\label{fig4}
	\end{figure}
	\begin{lemma}[Kwok et al. \cite{PK}]\label{l1}
		In an $R$-$S$ orientation, every vertex $w\in S$ satisfies $ \theta(w, R) \leq 2 $. 
	\end{lemma}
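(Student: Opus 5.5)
The plan is to prove the two bounds $\partial(w,R)\le 2$ and $\partial(R,w)\le 2$ separately, for an arbitrary $w\in S$. By Definition \ref{def2}, every edge of $[R,V_1]$ points into $V_1$, every edge of $[V_1,V_2]$ points into $V_2$, and every edge of $[V_2,R]$ points into $R$. I would use only two facts: $w$ has a neighbour in $R$, and $w$ has a neighbour in $F$.

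First, $w$ is not isolated in $F$. Since $G[S]$ has no isolated vertex, $w$ has a neighbour in $G[S]$. Its component in $G[S]$ therefore has at least two vertices, and the spanning forest $F$ of $G[S]$ contains a spanning tree of that component. Hence $w$ has an $F$-neighbour $w'$. Because $(V_1,V_2)$ is a bipartition of $F$, the vertices $w$ and $w'$ lie in different parts. Also fix a neighbour $r\in N(w)\cap R$, which exists by hypothesis.

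\emph{Case $w\in V_1$.} The edge $rw$ lies in $[R,V_1]$, so it is oriented $r\to w$, giving $\partial(R,w)=1$. The $F$-edge $ww'$ has $w'\in V_2$ and lies in $[V_1,V_2]$, so $w\to w'$. Next, $w'\in S$ also has a neighbour $r'\in R$, and the edge $w'r'$ lies in $[V_2,R]$, so $w'\to r'$. Thus $w\to w'\to r'$ is a directed path and $\partial(w,R)\le 2$.

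\emph{Case $w\in V_2$.} This is symmetric. The edge $wr$ lies in $[V_2,R]$, so $w\to r$ and $\partial(w,R)=1$. The $F$-neighbour $w'$ lies in $V_1$ and has a neighbour $r'\in R$. Then $r'\to w'$ because $r'w'\in[R,V_1]$, and $w'\to w$ because $w'w\in[V_1,V_2]$. This gives $\partial(R,w)\le 2$.

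Combining the two cases, $\theta(w,R)\le 2$.

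There is no real obstacle. The only step needing care is the remark that a non-isolated vertex of $G[S]$ has an $F$-neighbour. The argument uses only edges of $F$ and edges between $S$ and $R$, so any other edges of $G[S]$ and any edges inside $R$ may be oriented arbitrarily.
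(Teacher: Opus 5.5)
Your proof is correct. The paper gives no proof of its own and just cites Kwok et al.; your two-case argument (a direct edge between $w$ and $R$ in one direction, and a directed path of length $2$ through an $F$-neighbour $w'$ in the opposite part in the other) is the standard argument. You were also right to single out the existence of an $F$-neighbour: the lemma genuinely needs the hypothesis that $G[S]$ has no isolated vertex, together with reading $F$ as a maximal spanning forest (a spanning tree of each component).
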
 
	
	\section{The Proof of Theorem \ref{t6}}\label{sec-3}
	
	We establish the main result step by step: partition the vertex set of $G$ into several small subsets, prescribe an orientation for the edges among these subsets, and then calculate the directed distances between any ordered pairs of vertices.
	
	Let $G$ be a bridgeless graph with $d(G)=4$ and $g^*=g^*(G)\in\{4,5\}$. Then there is an edge $e=uv\in E(G)$ such that $l_G(e)=g^*\in\{4,5\}$. 
	
	Let $ S_{i,j} = \{ w \in V(G) \mid d(w, u) = i \text{ and } d(w, v) = j \} $. Then $S_{1,1}=\emptyset$ by $g^*\geq 4$, and thus $ S_{1,2}, S_{2,1}, S_{2,2}, S_{2,3}, S_{3,2}, S_{3,3} , S_{3,4}, S_{4,3}, S_{4,4}$ form a partition of $V(G)\backslash \{u,v\}$. We further partition every $S_{i,j}$ except $ S_{2,2} $ as shown in Figure \ref{fig1}.
	\begin{align*}
		A' &= \{ w \mid w \in S_{1,2} \text{ and } N(w) \subseteq S_{1,2} \cup \{u\} \}, & A &= S_{1,2} - A'; \\
		B' &= \{ w \mid w \in S_{2,1} \text{ and } N(w) \subseteq S_{2,1} \cup \{v\} \}, & B &= S_{2,1} - B'; \\
		I' &= \{ w \mid w \in S_{2,3} \text{ and } N(w) \subseteq S_{2,3} \cup A \}, & I &= S_{2,3} - I'; \\
		J' &= \{ w \mid w \in S_{3,2} \text{ and } N(w) \subseteq S_{3,2} \cup B \}, & J &= S_{3,2} - J'; \\
		X' &= \{ w \mid w \in S_{3,3} \text{ and } |[w, S_{2,2} \cup I \cup J\cup K\cup L]| = 1 \}, & X &= S_{3,3} - X';\\
		K' &= \{ w \mid w \in S_{3,4} \text{ and } N(w) \subseteq S_{3,4} \cup I \}, & K &= S_{3,4} - K'; \\
		L' &= \{ w \mid w \in S_{4,3} \text{ and } N(w) \subseteq S_{4,3} \cup J \}, & L &= S_{4,3} - L'; \\
		M' &= \{ w \mid w \in S_{4,4} \text{ and } |[w, S_{3,3} \cup K \cup L]| = 1 \}, & M &= S_{4,4} - M'.
	\end{align*}
	
	\vspace{-15pt}
	\begin{figure}[ht]
		
		\centering
		\includegraphics[scale=0.6]{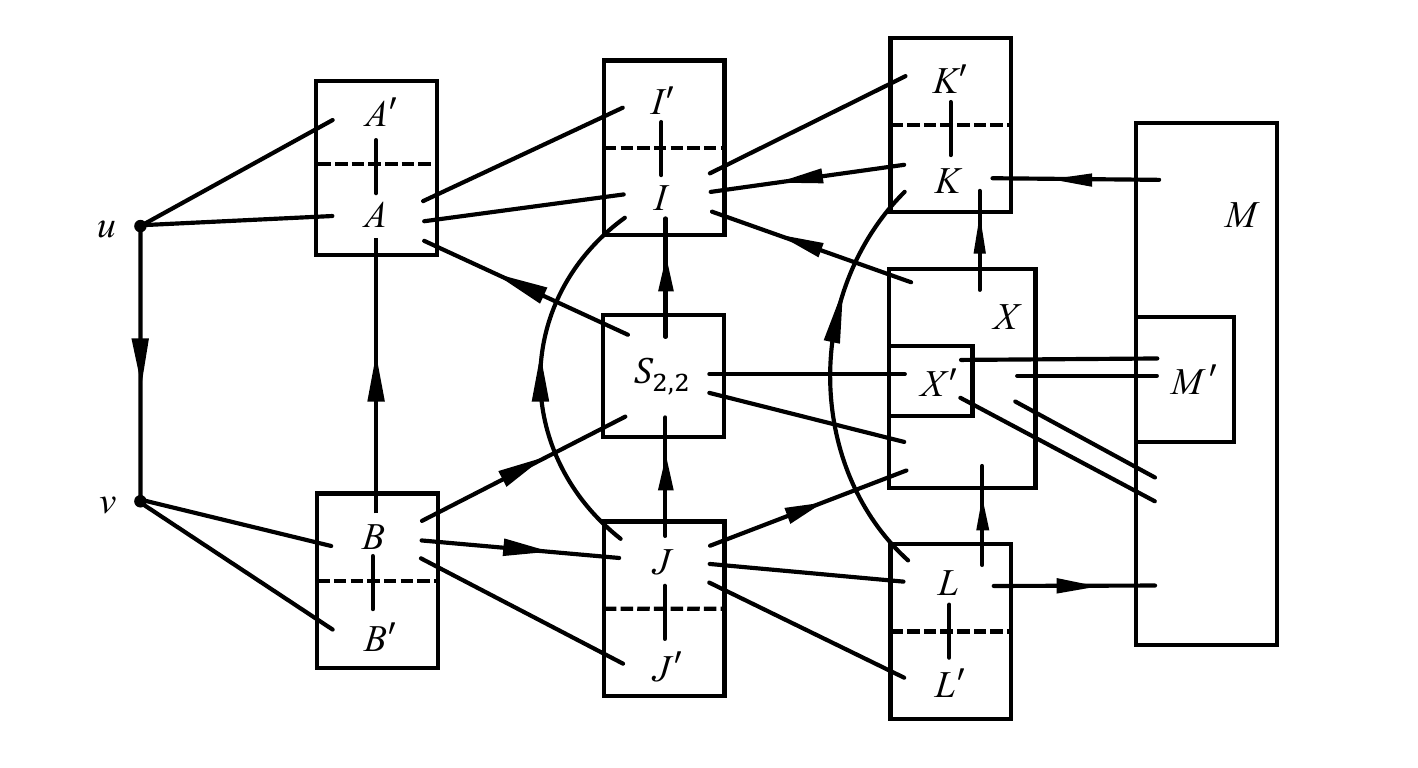}
		\vspace{-20pt}
		\caption{The partition of $V(G)$; orientations will be given subsequently.}\label{fig1}
		\vspace{-5pt}
	\end{figure}	
	
	\begin{proposition}\label{pro1}
		{\rm (i)} For $w\in S_{4,4}$, if $N(w)\cap S_{3,3}=\emptyset$, then $N(w)\cap K\neq \emptyset$ and $N(w)\cap L\neq \emptyset$; if $w\in M'$, then $|[w,S_{3,3}]|=1$ and $N(w)\cap (K\cup L)=\emptyset$.\\
		{\rm (ii)} For $w\in S_{3,3}$, if $N(w)\cap S_{2,2}=\emptyset$, then $N(w)\cap I\neq \emptyset$ and $N(w)\cap J\neq \emptyset$; if $w\in X'$, then $|[w,S_{2,2}]|=1$ and $N(w)\cap (I\cup J\cup K\cup L)=\emptyset$.
	\end{proposition}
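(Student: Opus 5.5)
The plan is to prove both parts by the same elementary counting of distances. Two facts drive everything. First, adjacent vertices $w,z$ satisfy $|d(w,u)-d(z,u)|\le 1$ and $|d(w,v)-d(z,v)|\le 1$. Second, $d(G)=4$ forces every vertex into one of the listed classes $S_{i,j}$ with $i,j\le 4$. We also have $|i-j|\le 1$ because $uv$ is an edge, and the pair $(1,1)$ is empty since $g^*\ge 4$.

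\textbf{Part (ii).} Let $w\in S_{3,3}$. Any neighbour $z$ of $w$ with $d(z,u)=2$ lies in $S_{2,j}$ with $j\in\{2,3\}$. Since $w$ is at distance $3$ from $u$, it has a neighbour at distance $2$ from $u$; if $N(w)\cap S_{2,2}=\emptyset$, that neighbour lies in $S_{2,3}$. It cannot lie in $I'$, because vertices of $I'$ have all neighbours in $S_{2,3}\cup A$, and $w\notin S_{2,3}\cup A$. Hence $N(w)\cap I\neq\emptyset$. The symmetric argument with $v$ gives a neighbour in $S_{3,2}\setminus J'=J$.

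Now let $w\in X'$, so $|[w,S_{2,2}\cup I\cup J\cup K\cup L]|=1$.
\begin{itemize}
\item If $N(w)\cap S_{2,2}=\emptyset$, the previous paragraph gives edges to both $I$ and $J$, which is two edges. So $w$ has a neighbour in $S_{2,2}$, and therefore $|[w,S_{2,2}]|=1$.
\item The single counted edge is this one, so there are no edges to $I\cup J\cup K\cup L$.
\end{itemize}

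\textbf{Part (i).} Let $w\in S_{4,4}$. Its neighbours lie in $S_{3,3}$, $S_{3,4}$, $S_{4,3}$ or $S_{4,4}$. A neighbour at distance $3$ from $u$ exists; if $N(w)\cap S_{3,3}=\emptyset$, it lies in $S_{3,4}$. It cannot lie in $K'$, since vertices of $K'$ have neighbours only in $S_{3,4}\cup I$, and $w\notin S_{3,4}\cup I$. So $N(w)\cap K\neq\emptyset$, and symmetrically $N(w)\cap L\neq\emptyset$.

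For $w\in M'$ we have $|[w,S_{3,3}\cup K\cup L]|=1$. If $w$ had no neighbour in $S_{3,3}$, it would have edges to both $K$ and $L$, which is a contradiction. Hence the unique counted edge goes to $S_{3,3}$, so $|[w,S_{3,3}]|=1$ and $N(w)\cap(K\cup L)=\emptyset$.

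\textbf{Main obstacle.} The only delicate point is the exclusion of $I'$, $J'$, $K'$ and $L'$. This needs the observation that $w$ lies outside the allowed neighbourhood sets in their definitions, which follows because those sets sit in different $S_{i,j}$ classes from $w$. Everything else is routine bookkeeping.
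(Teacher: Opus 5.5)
Your proof is correct and follows the same route as the paper. In each part you take a neighbour one step closer to $u$ (resp.\ $v$), observe that it must lie in $S_{3,4}$ or $S_{2,3}$ (resp.\ $S_{4,3}$ or $S_{3,2}$) when there is no neighbour in $S_{3,3}$ or $S_{2,2}$, and then use the single-edge count in the definition of $M'$ (resp.\ $X'$). You also spell out why that neighbour cannot lie in $K'$, $L'$, $I'$ or $J'$, namely their neighbourhood conditions; the paper leaves this implicit in the phrase ``by $d(w,u)=d(v,w)=4$''.
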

	
	\begin{proof}
		If $N(w)\cap S_{3,3}= \emptyset$, then $N(w)\cap K\neq \emptyset$ and $N(w)\cap L\neq \emptyset$ by $d(w,u)=d(v,w)=4$.
	
		If $w\in M'$ and $N(w)\cap S_{3,3}$
		=$\emptyset$, then $|[w,S_{3,3}\cup K\cup L]|\geq 2$, a contradiction with the definition of $M'$. Thus $N(w)\cap S_{3,3}\neq\emptyset$, which implies $|[w,S_{3,3}]|=1$ and $N(w)\cap (K\cup L)=\emptyset$ by the definition of $M'$.  Similarly, we can prove (ii) holds.
	\end{proof}
	
	We orient $uv$ as $u \to v$; then $\partial(u,v) = 1$. Based on the forthcoming Orientations \ref{cons1}-\ref{cons11} and Claims \ref{cla1}–\ref{cla13}, the relevant directed distances are listed in Tables \ref{Table8-1}–\ref{Table8-2}. Note that $\partial(x,y) \leq \partial(x,u) + \partial(u,v) + \partial(v,y)$. 
	
	\begin{table}[H]
		\centering
		
		\renewcommand{\arraystretch}{1.1}
		\setlength{\tabcolsep}{4pt}
		\begin{tabular}{|l|*{8}{>{\centering\arraybackslash}p{1cm}|}}
			\hline
			\textbf{for \(w\) in} & \(A\) & \(B\)& $A'$ & $B'$ & \(I\) & \(J\)& $I'$&$J'$  \\ \hline
			\(\partial(w, u) \leq\) & $g^*-1$ & 9 &2&$g^*+1$& $g^*$ & 8&$g^*+2$ & 11 \\ \hline
			\(\partial(v, w) \leq\) & $g^*+3$ & $g^*-1$ & $g^*+1$&2&9 & 2 &$g^*+5$& $g^*+1$ \\ \hline
		\end{tabular}
		\vspace{-3pt}
		\caption{Directed-distance bounds, Part 1.}\label{Table8-1}
	\end{table}

	\begin{table}[H]
		\centering
	
		\vspace{5pt}
		\renewcommand{\arraystretch}{1.1}
		\setlength{\tabcolsep}{4pt}
		\begin{tabular}{|l|*{8}{>{\centering\arraybackslash}p{1cm}|}}
			\hline
			\textbf{for \(w\) in} & \(K\) & \(L\)&$K'$&$L'$&$S_{2,2}$& $S_{3,3}$ &\(M\)&$M'$  \\ \hline
			\(\partial(w, u) \leq\) &3 & 7& $g^*+2$&7&2&$g^*+1$ & $g^*+2$&$g^*+3$  \\ \hline
			\(\partial(v, w) \leq\) & 5 & 4 & $9$&4&2&$6$ &$7$&$8$  \\ \hline
		\end{tabular}
		\vspace{-5pt}
		\caption{Directed-distance bounds, Part 2.}\label{Table8-2}
	\end{table}

	\begin{claim}\label{cla18}
		If $\{x,y\}\subseteq A\cup A'\cup B'\cup I\cup K\cup L\cup K'\cup L'\cup S_{2,2}\cup  S_{3,3}\cup M\cup M'$ or $\{x,y\}\cap \{u,v\}\neq \emptyset$, then $\partial(x,y)\leq g^*+13$.
	\end{claim}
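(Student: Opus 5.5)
The plan is to read the claim off Tables \ref{Table8-1} and \ref{Table8-2}, taking their bounds as established by the forthcoming orientations and claims, and to route every path through the arc $u\to v$. For $x,y$ in the listed set $\mathcal{T}=A\cup A'\cup B'\cup I\cup K\cup L\cup K'\cup L'\cup S_{2,2}\cup S_{3,3}\cup M\cup M'$ we use
\[
\partial(x,y)\le \partial(x,u)+\partial(u,v)+\partial(v,y)=\partial(x,u)+1+\partial(v,y).
\]
It then suffices to show that $\max_{x\in\mathcal{T}}\partial(x,u)+\max_{y\in\mathcal{T}}\partial(v,y)\le g^*+12$. The two maxima cannot be taken independently, so I will pair the large entries carefully.

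From the tables, the first row over $\mathcal{T}$ gives
\[
\partial(x,u)\le g^*+3 \quad (x\in M'),\qquad \partial(x,u)\le g^*+2 \quad (x\in I'\text{-free part: }K',M),
\]
and every other class has $\partial(x,u)\le g^*+1$ or at most $7$. The second row gives $\partial(v,y)\le 9$ for $y\in I\cup K'$, $\partial(v,y)\le 8$ for $y\in M'$, $\partial(v,y)\le g^*+3\le 8$ for $y\in A$, and smaller values elsewhere. Naively, $(g^*+3)+1+9=g^*+13$. So $\max\partial(x,u)\le g^*+3$ and $\max\partial(v,y)\le 9$ already give the bound, since $7+1+9=17\le g^*+13$ for $g^*\ge4$ as well. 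This reduces the pairs inside $\mathcal{T}$ to a routine check of the worst combination.

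For pairs meeting $\{u,v\}$, take the full table including $B,J,I',J'$.
\begin{itemize}
\item If $x=u$, then $\partial(u,y)\le 1+\partial(v,y)\le 1+\max(g^*+5,9)=g^*+6$ (with $y=v$ trivial).
\item If $y=v$, then $\partial(x,v)\le \partial(x,u)+1\le 12$.
\item If $x=v$, then $\partial(v,y)\le g^*+5$ directly.
\item If $y=u$, then $\partial(x,u)\le 11$.
\item The remaining cases $\partial(v,u)$ and $\partial(u,v)$ are covered by $\partial(v,w)+\partial(w,u)$ for any $w\in S_{2,2}$, which gives at most $4$, or via $A$/$B$ if $S_{2,2}=\emptyset$.
\end{itemize}
All of these values are at most $g^*+13$.

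The main obstacle is really the entries in the tables themselves: $\partial(w,u)\le g^*+3$ for $M'$ and $\partial(v,w)\le 9$ for $I,K'$. Here I rely on the paper's orientation claims. I also need to confirm that no excluded class ($B,J,I',J'$) is needed on the paths; the excluded classes are exactly those whose values ($9$, $11$, $g^*+5$) would break the sum.
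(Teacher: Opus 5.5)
Your proposal is correct and follows the paper's proof: both bound $\partial(x,u)\le g^*+3$ and $\partial(v,y)\le 9$ over the listed classes using Tables \ref{Table8-1}--\ref{Table8-2}, route through the arc $u\to v$, and handle the cases meeting $\{u,v\}$ from the full tables together with $\partial(u,v)=1$ and $\partial(v,u)=g^*-1$ (Claim \ref{cla1}(i), which you can cite directly instead of detouring through $S_{2,2}$ or $A$). Your worries that the two maxima ``cannot be taken independently'' and that the paths must avoid $B,J,I',J'$ are unfounded: the table entries bound directed distances, so the triangle inequality applies to the two maxima directly, which is exactly what you end up doing.
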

	
	\begin{proof}
		By Tables \ref{Table8-1}–\ref{Table8-2}, we have $\partial(x,u) \le g^* +3$ and $\partial(v,y) \le 9$. Hence $
		\partial(x,y)\leq \partial(x,u) + \partial(u,v) + \partial(v,y) \le  g^* +13$ by $\partial(u,v)=1$.
		The cases in which one of $x,y$ is $u$ or $v$ follow immediately from $\partial(u,v) = 1$, $\partial(v,u) = g^* -1$, and the same two tables.
	\end{proof}
	
	By Claim \ref{cla18}, it remains to consider the following claim. 
	
	\begin{claim}\label{cla19} Let $x,y \in V(G)$ with $\{x,y\} \cap \{u,v\} = \emptyset$. Then we have\\
		{\rm (i)} If $\{x,y\}\cap (B\cup J)\neq \emptyset$, then $\partial(x,y)\leq g^*+13$.\\
		{\rm (ii)} If $\{x,y\}\cap (I'\cup J')\neq \emptyset$, then $\partial(x,y)\leq g^*+13$.
	\end{claim}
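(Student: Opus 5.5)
We cannot simply route every pair through the edge $u\to v$: by Tables \ref{Table8-1}--\ref{Table8-2}, $\partial(w,u)$ is large ($9$, $8$, $11$) for $w\in B\cup J\cup J'$, and $\partial(v,w)$ is large ($g^*+5$) for $w\in I'$. Using $\partial(x,y)\le\partial(x,u)+1+\partial(v,y)$ therefore fails exactly when $x\in B\cup J\cup J'$ with $\partial(v,y)$ large, or $y\in I'$ with $\partial(x,u)$ large. The plan is to handle these pairs with a second hub. Each \emph{bad} source $x$ has a short route to $v$, or to $S_{2,2}$ or $K\cup L$, and from there we reach $y$ without passing through $u$.

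\textbf{Small sums through $u$.} First record, for each part, the two quantities $\partial(x,u)$ and $\partial(v,y)$. Whenever their sum is at most $g^*+12$, the pair is done. For $x\in J$ we have $\partial(x,u)\le 8$, so any $y$ with $\partial(v,y)\le g^*+4$ is fine. This covers every $y$ except $y\in I\cup K'$ (where $\partial(v,y)=9$) and $y\in I'$. For $x\in B$ ($\partial(x,u)\le 9$) and $x\in J'$ ($\partial(x,u)\le 11$), the admissible targets are those with $\partial(v,y)\le g^*+3$, respectively $\le g^*+1$. Symmetrically, for $y\in B\cup J$ we have $\partial(v,y)\le g^*-1$, so every source with $\partial(x,u)\le 13$ works. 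Since all entries are at most $11$, every pair with $y\in B\cup J$ and $x\notin B\cup J\cup J'$ is immediate. The same bookkeeping disposes of most pairs involving $I'$ or $J'$, leaving a short explicit list of residual pairs.

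\textbf{Residual pairs.} For the residual pairs we use the structure of Orientations~\ref{cons1}--\ref{cons11}, as recorded in Claims~\ref{cla1}--\ref{cla13}.

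For $x\in B\cup J\cup J'$ we want $\partial(x,v)$ to be small. This should hold because $B\subseteq S_{2,1}$ and $J,J'\subseteq S_{3,2}$ are oriented towards $v$ by an $R$-$S$ orientation, so Lemma~\ref{l1} gives $\theta(w,R)\le 2$. The route is then $x\to v\to y$, with length $\partial(x,v)+\partial(v,y)$. Here $\partial(x,v)\le 3$ or $4$, and $\partial(v,y)\le 9$ or $g^*+5$, which gives the bound.

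For $y\in I\cup I'\cup K'$ and the remaining sources, reach $y$ from $u$ instead. Use $\partial(u,A)=1$ and $\partial(A,I)\le 2$ by the $R$-$S$ structure, then go $x\to u\to A\to I\to I'$. This bounds the distance by $\partial(x,u)+\partial(u,y)$, where $\partial(u,y)\le 5$ or so.

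Pairs with both endpoints in $B\cup J\cup J'$ are handled inside the $v$-side: go from $x$ to $v$ in at most $4$ steps, then from $v$ to $y$ in at most $g^*+1$ steps, which is well within $g^*+13$.

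\textbf{Main obstacle.} The hard case is $x\in J'$ with $y\in I'$, and similarly $x\in B$ with $y\in I'\cup K'$, because neither hub is short on both ends. I would first try the route $x\to v\to u\to A\to I\to I'$. Its length is $\partial(x,v)+\partial(v,u)+\partial(u,y)\le 3+(g^*-1)+\partial(u,y)$, which is at most $g^*+13$ provided $\partial(u,y)\le 11$ for $y\in I'$. The latter follows if the orientation puts $I'$ within $2$ of $A$, by Lemma~\ref{l1} applied with $R=A$, $S=I'$.

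If $\partial(u,I')$ is not available in the needed direction, I would route through $S_{2,2}$ or $S_{3,3}$ instead. These satisfy $\partial(\cdot,u),\partial(v,\cdot)\le 2$ or so, and the definitions of $I$ and $J$ guarantee edges into $S_{2,2}\cup S_{3,3}$. The care needed is in checking that every such residual pair has some admissible route, consistent with the specific orientations.
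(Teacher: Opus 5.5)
Your threshold bookkeeping is correct and isolates essentially the same residual pairs as the paper:
\begin{itemize}
\item $x\in B$ with $y\in I\cup I'\cup K'\cup M'$;
\item $x\in J$ with $y\in I\cup I'\cup K'$;
\item $x\in J'$ with $y\in A\cup I\cup I'\cup K'\cup S_{3,3}\cup M\cup M'$;
\item $x\in M'$ with $y\in I'$.
\end{itemize}
The gap is in how you resolve them. Both of your ``second hubs'' rely on arcs pointing the opposite way from the actual orientation.

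\textbf{The hub $v$.} Orientation~\ref{cons1} sets $v\to\cup_{i=1}^9 B_i$, and $J\cup J'\subseteq S_{3,2}$ is not adjacent to $v$. Since $N(v)=\{u\}\cup B\cup B'$, every in-neighbour of $v$ lies in $\{u\}\cup B_{10}\cup B'$. Also, the $R$-$S$ orientations on $J'_{53},J'_{63}$ have $R\subseteq B$, not $R=\{v\}$. Nothing in the construction gives a short path from, say, $x\in B_8\cup B_9$, $x\in J_5$ or $x\in J'_6$ into $B_{10}\cup B'$. So in general all you know is $\partial(x,v)\le\partial(x,u)+1$, and the route $x\to v\to y$ is no better than the route through $u\to v$. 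Your ``main obstacle'' route $x\to v\to u\to\cdots$ is even worse, since it pays $\partial(v,u)=g^*-1$ on top.

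\textbf{The hub $u$.} The claims $\partial(u,A)=1$ and $u\to A\to I\to I'$ are false: Orientation~\ref{cons1} has $\cup_{i=1}^5 I_i\to\cup_{i=1}^8 A_i\to u$. The $R$-$S$ orientations with $R\subseteq A$ on $I'_{73},I'_{83}$ only yield $\partial(v,y)\le\partial(v,A)+2$. That is precisely the source of the bad entry $g^*+5$ for $I'$, not a way around it.

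\textbf{The fallback.} Routing through $S_{2,2}$ or $S_{3,3}$ has the same defect. The bounds you quote for them are $\partial(\cdot,u)$ and $\partial(v,\cdot)$, i.e.\ again routes through $u\to v$.

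\textbf{The missing idea.} Keep the single hub $u\to v$, and sharpen one summand using the finer subclasses together with $d(G)=4$.

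For example, take $x\in B$, $y\in I$. One has $\partial(x,u)\le 2g^*-2$ unless $x\in B_8\cup B_9$, and $\partial(v,y)\le g^*+3$ unless $y\in I_{62}^{(33)}$. In the doubly bad case, the neighbourhood restrictions coming from the definitions of these subclasses force $d(x,y)\ge 5$, a contradiction. The same happens for $x\in J_5$ with $y\in I_{62}^{(33)}$ or $y\in K'_9$.

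In the remaining residual cases you take a shortest path $xx_1x_2x_3y$ and show one of the following:
\begin{itemize}
\item $x_1$ is forced into a set such as $J'_1\cup J'_2$ or $\cup_{i=2}^5B_i$, with $x\to x_1$ and $\partial(x_1,u)$ small; or
\item $x_3$ is forced into a set such as $(\cup_{i=1}^3 I'_i)\cup I'_{41}\cup I'_{51}\cup I'_{61}\cup I'_7$, with $x_3\to y$ and $\partial(v,x_3)\le g^*+2$.
\end{itemize}
Sometimes the relevant end arc points the wrong way inside an $R$-$S$ block ($y\in I'_{83}$ with $y\to x_3$, or $x\in J'_{63}$ with $x_1\to x$). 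Then you pass to an $R$-$S$ partner $y'\to y$ (resp.\ $x\to x'$) in the same block and rerun the argument at the cost of one arc.

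Your proposal contains none of this, so it does not prove the bound for any residual pair.
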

	
	We postpone the proof of Claim \ref{cla19} to the end of this section. By combining Claims $\ref{cla18}$ and $\ref{cla19}$, we obtain $d(\rbjt{G}) \leq g^* + 13$. Consequently, $\rbjt{\text{diam}}(G) \leq d(\rbjt{G}) \leq g^* + 13$. This completes the proof of Theorem~\ref{t6}.
	{\hfill $\blacksquare$\par}

	To define Orientations \ref{cons1}-\ref{cons11} and prove Claims \ref{cla1}--\ref{cla13}, we further partition the sets $M,X',K,I,J,L,$ $ A,B$ as follows.
	\begin{align*}
		M_1&=\{w\mid w\in M \text{ and } N(w)\cap  K\neq\emptyset\}, \quad M_2=\{w\mid w\in M-M_1\text{ and } N(w)\cap L\neq\emptyset\},\\ M_3&=M-M_1-M_2;\\
		X'_1 &= \{ w \mid w \in X' \text{ and } N(w) \cap M_1 \neq \emptyset \},\quad X'_2= \{ w \mid w \in X'-X'_1 \text{ and } N(w) \cap M_2 \neq \emptyset \},\\
		X'_3&=\{w\mid w\in X'-X'_1-X'_2\text{ and } N(w)\cap X\neq\emptyset\},\\
		X'_4 &= \{ w \mid w \in X' - \cup_{i=1}^3X'_i \text{ and } w \text{ is isolated in } G[X' - \cup_{i=1}^3X'_i] \},\quad \\
		X'_5 &= \{ w \mid w \in X' - \cup_{i=1}^3X'_i \text{ and } w \text{ is not isolated in } G[X' - \cup_{i=1}^3X'_i] \};\\
		K_1 &= K \cap N(S_{3,3} \cup L),\quad K_2=K\cap N(K_1)-K_1, \quad  K_3 = K - K_1-K_2;\\
		I_1 &= I \cap N(S_{2,2} \cup J), \  I_2=I\cap N(S_{3,3})-I_1,\  I_3=I\cap N(I_1)-I_1-I_2,\\
		I_4 &=I\cap N(I_2\cup K_1)-\cup_{i=1}^3 I_i,\ I_5=I\cap N(K_2\cup K_3)-\cup_{i=1}^4 I_i, \  I_6= I-\cup_{i=1}^5 I_i;\\
		J_1 &= J \cap N(S_{2,2} \cup I), \  J_2=J\cap N(S_{3,3})-J_1,\ J_3=J\cap N(J_1)-J_1-J_2,\\
		L_1 &= L \cap N(K),\quad L_2=L\cap N(L_1\cup M_1)-L_1, \\ L_3 &= L\cap N(J_1)\cap N(L')-L_1-L_2,\quad L_4=L-\cup_{i=1}^3 L_i; \\
		J_4 &=J\cap N(J_2\cup L_1)-\cup_{i=1}^3 J_i,\    J_5=J\cap N(L_2\cup L_3\cup L_4)-\cup_{i=1}^4 J_i, \  J_6= J-\cup_{i=1}^5 J_i;\\
		A_1 &= A \cap N(B),\quad  A_2 = A \cap N(S_{2,2}) - A_1,\quad
		A_3 = A\cap N(A_1)-\cup_{i=1}^2 A_i,\\
		A_{3+j} &= A \cap N(I_{j}) - \cup_{i=1}^{2+j}A_i,\ j=1,2,\dots,5,\quad  A_{9}=A-\cup_{i=1}^8 A_i;\\
		B_1 &= B \cap N(A),\quad  B_2 = B \cap N(S_{2,2}) - B_1,\quad
		B_3 =B\cap N(B_1)-\cup_{i=1}^2 B_i ,\\
		B_{3+j} &= B \cap N(J_{j}) - \cup_{i=1}^{2+j}B_i,\ j=1,2,\dots,6,\quad  B_{10}=B-\cup_{i=1}^9 B_i.
	\end{align*}
	
	The definitions of $A_{9},B_{10},I_6,J_6,K_3,L_4,M_1,M_2$ give the following observation.
	
	\begin{observation}\label{obs1}
		For every $w\in A_{9}\cup B_{10}\cup I_6\cup J_6\cup K_3\cup L_4$, we have $N(w)\cap S_{2,2}=\emptyset$, $N(w)\cap W_1= \emptyset$ and $N(w)\cap W_2\neq\emptyset$, where $W_1,W_2$ are defined in Table~\ref{Table 2}.
		\vspace{0cm}
		\begin{table}[ht]
			\centering

			\renewcommand{\arraystretch}{1.1}
			\setlength{\tabcolsep}{6pt}
			\begin{tabular}{|c|c|c|c|c|c|c|}
				\hline
				when $w$ in & $A_{9}$ & $B_{10}$ & $I_6$ & $J_6$ & $K_3$&  $L_4$ \\ \hline
				$W_1=$ & $B\cup (\cup_{i=1}^5 I_i)$ & $ A\cup  J$ & $ J\cup S_{3,3}\cup K$ & $I\cup S_{3,3}\cup L$ & $ S_{3,3}\cup L$ & $K$ \\ \hline
				$W_2=$ & $ I_6\cup I'$ & $ J'$ & $ K'$ & $ L'$ & $ M_1$ & $ S_{3,3}\cup M$ \\ \hline
			\end{tabular}
			\vspace{-5pt}
			\caption{Neighborhoods of selected vertices.}
			\label{Table 2}
		\end{table}
	\end{observation}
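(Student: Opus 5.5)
The plan is to handle the six rows of Table~\ref{Table 2} one at a time, using only three ingredients. First, the distance constraint: if $wz\in E(G)$ then $|d(w,u)-d(z,u)|\le 1$ and $|d(w,v)-d(z,v)|\le 1$, which restricts which $S_{i,j}$ can contain a neighbour of $w$; recall also $S_{1,1}=\emptyset$. Second, the defining conditions of the primed sets, which force, for example, $N(A')\subseteq S_{1,2}\cup\{u\}$, $N(I')\subseteq S_{2,3}\cup A$, and so on, so that a vertex outside these ranges cannot be adjacent to a primed vertex. Third, the exclusion built into the indexing: $w$ lies in none of the earlier subclasses, so it has no neighbours in the corresponding sets. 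In each case, the fact that $w$ lies in an unprimed set gives a neighbour outside the "home" region, and the exclusions leave only $W_2$ as its possible location.

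\emph{Case $w\in A_{9}$.} The neighbours of $w\in S_{1,2}$ lie in $\{u\}\cup S_{1,2}\cup S_{2,1}\cup S_{2,2}\cup S_{2,3}$.
\begin{itemize}
\item Since $w\notin A_1\cup A_2$, $w$ has no neighbour in $B$ or in $S_{2,2}$.
\item No vertex of $B'$ is adjacent to $w$, because $N(B')\subseteq S_{2,1}\cup\{v\}$.
\item Since $w\notin A_4\cup\cdots\cup A_8$, $w$ has no neighbour in $I_1\cup\cdots\cup I_5$.
\end{itemize}
As $w\notin A'$, some neighbour lies outside $S_{1,2}\cup\{u\}$, and by the above it must lie in $S_{2,3}\setminus(I_1\cup\cdots\cup I_5)=I_6\cup I'$.

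\emph{Case $w\in B_{10}$.} This case is symmetric. The neighbours lie in $\{v\}\cup S_{2,1}\cup S_{1,2}\cup S_{2,2}\cup S_{3,2}$.
\begin{itemize}
\item Since $w\notin B_1\cup B_2$, $w$ has no neighbour in $A$ or in $S_{2,2}$.
\item No vertex of $A'$ is adjacent to $w$, because $N(A')\subseteq S_{1,2}\cup\{u\}$.
\item Since $w\notin B_4\cup\cdots\cup B_9$, $w$ has no neighbour in $J=J_1\cup\cdots\cup J_6$.
\end{itemize}
Since $w\notin B'$, the forced outside neighbour lies in $J'$.

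\emph{Cases $w\in I_6$ and $w\in J_6$.} For $w\in I_6\subseteq S_{2,3}$, the possible neighbours lie in $S_{1,2},S_{2,2},S_{2,3},S_{3,2},S_{3,3},S_{3,4}$.
\begin{itemize}
\item Since $w\notin I_1$, $w$ has no neighbour in $S_{2,2}\cup J$; and $J'$ is excluded because $N(J')\subseteq S_{3,2}\cup B$.
\item Since $w\notin I_2$, $w$ has no neighbour in $S_{3,3}$.
\item Since $w\notin I_4\cup I_5$, $w$ has no neighbour in $K_1\cup K_2\cup K_3=K$.
\item $A'$ is excluded because $N(A')\subseteq S_{1,2}\cup\{u\}$.
\end{itemize}
Since $w\notin I'$, it has a neighbour outside $S_{2,3}\cup A$, and the only remaining option is $K'$. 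The case $J_6$ is identical with the roles of $u$ and $v$ exchanged. The partition $J_1,\dots,J_6$ excludes $S_{2,2}\cup I$, then $S_{3,3}$, then $L_1$, then $L_2\cup L_3\cup L_4$, so $L$ is excluded entirely. Also $I'$ and $B'$ are excluded by their neighbourhood conditions, which leaves $L'$.

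\emph{Cases $w\in K_3$ and $w\in L_4$.} For $w\in K_3\subseteq S_{3,4}$, neighbours lie in $S_{2,3}\cup S_{3,3}\cup S_{3,4}\cup S_{4,3}\cup S_{4,4}$; in particular $S_{2,2}$ is excluded since $d(w,v)=4$.
\begin{itemize}
\item Since $w\notin K_1$, $w$ has no neighbour in $S_{3,3}\cup L$.
\item $I'$ and $L'$ are excluded by their neighbourhood conditions.
\end{itemize}
Since $w\notin K'$, the neighbour outside $S_{3,4}\cup I$ must be some $z\in S_{4,4}$. By Proposition~\ref{pro1}(i), $z\notin M'$ because $z$ is adjacent to $K$; hence $z\in M$, and $z\in M_1$ since $N(z)\cap K\ne\emptyset$. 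For $w\in L_4\subseteq S_{4,3}$, the set $S_{2,2}$ is again excluded by distance, and $w\notin L_1$ gives $N(w)\cap K=\emptyset$. The sets $J'$ and $K'$ are excluded by their neighbourhood conditions. So the neighbour outside $S_{4,3}\cup J$ lies in $S_{3,3}$ or in $S_{4,4}$, and in the latter case it lies in $M$ by Proposition~\ref{pro1}(i). This gives $N(w)\cap(S_{3,3}\cup M)\neq\emptyset$.

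The main obstacle is bookkeeping rather than any idea. One must check carefully, for each of the six sets, that every $S_{i,j}$ permitted by the distance constraint is either excluded by the index ordering, excluded by a primed-set neighbourhood condition, or equal to the target $W_2$. The only nontrivial input is Proposition~\ref{pro1}(i), which is needed to rule out $M'$ in the last two cases.
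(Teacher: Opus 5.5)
Your proof is correct and takes the same approach as the paper, which justifies the observation only by saying it follows from the definitions of $A_9,B_{10},I_6,J_6,K_3,L_4,M_1,M_2$. Your row-by-row check supplies the verification the paper leaves implicit: the distance layering, the primed-set neighbourhood conditions, the index exclusions, and Proposition~\ref{pro1}(i) to rule out $M'$ in the $K_3$ and $L_4$ rows.
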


    We now construct the desired orientation of $G$. During this process, the undirected graph $G$ is first transformed into a mixed graph, and then into a directed graph.
	\begin{Orientation}\label{cons1}
		Let $  u \rightarrow v $, $B\rightarrow A\cup J$, \( B\cup J \rightarrow S_{2,2} \rightarrow A\cup I  \), $J\rightarrow I\cup S_{3,3}$, $L\rightarrow S_{3,3}\cup K\cup M$, $ S_{3,3}\rightarrow K\cup I$, $M\rightarrow K\rightarrow I$, as shown in  Figure \ref{fig1}; Let $J\rightarrow L_1\cup L_2\cup L_4$, $J\rightarrow L'\cap N(L_3)$, $L'\cup (J-J_1)\rightarrow L_3\rightarrow J_1$,  $\cup_{i=1}^5 I_i\rightarrow  \cup_{i=1}^8 A_i\rightarrow u$, $v\rightarrow \cup_{i=1}^9 B_i$, \( A_i \rightarrow A_j \) for \( 1 \leq i < j \leq 8\), \( B_j \rightarrow B_i \) for \( 1 \leq i < j \leq 9 \), \( I_i \rightarrow I_j \) for \( 1 \leq i < j \leq 5 \), \(J_j \rightarrow J_i \) for \( 1 \leq i < j \leq 5 \), $K_i\rightarrow K_j$ for $1\leq i<j\leq 3$, and $L_j\rightarrow L_i$ for $1\leq i<j\leq 4$.
		
	\end{Orientation}

	\begin{samepage}
		\begin{claim}\label{cla1}
			{\noindent\rm (i)} $\partial(u,v)=1$, $\partial(v,u)= g^*-1$, $\partial(w,u)=\partial(v,w)=2$ for $w\in S_{2,2}$.\\
			{\rm (ii)} Table \ref{Table1} presents upper bounds on directed distances for $I\cup A\cup J\cup B$.
			\begin{table}[htbp]
				\centering
		
				\renewcommand{\arraystretch}{1.1}
				\setlength{\tabcolsep}{2pt}
				$
				\begin{array}{|*{18}{c|}} 
					\hline
					\text{for $w$ in}
					&I_1&I_3
					&A_1&A_2&A_3&A_4&A_6
					&J_1&J_3
					&B_1&B_2&B_3&B_4&B_6\\
					\hline
					\partial(w,u)\le
					&2&2&1&1&1&1&1&3&4&2&3&3&4&5\\
					\hline
					\partial(v,w)\le
					&3&4&2&3&3&4&5&2&2&1&1&1&1&1\\
					\hline
				\end{array}
				$
				\vspace{-5pt}
				\caption{Directed-distance bounds for $w\in I\cup A\cup J\cup B$}
				\label{Table1}
			\end{table}
			\vspace{-10pt}
		\end{claim}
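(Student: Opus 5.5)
The plan is to read every bound in Claim \ref{cla1} directly off Orientation \ref{cons1}, by exhibiting explicit short directed paths built from the arcs fixed so far. The arcs used are $u\to v$, $B\to A\cup J$, $B\cup J\to S_{2,2}\to A\cup I$, $\cup_{i=1}^5 I_i\to \cup_{i=1}^8 A_i\to u$, $v\to\cup_{i=1}^9 B_i$, and the chain orders among the $A_i$, $B_i$, $I_i$, $J_i$. For each set we use its defining neighbourhood property (e.g.\ $A_1\subseteq N(B)$, $A_3\subseteq N(A_1)$, $I_1\subseteq N(S_{2,2}\cup J)$), together with the distance information $d(w,u),d(w,v)$ coming from the sets $S_{i,j}$.

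\textbf{Part (i).} $\partial(u,v)=1$ holds by the orientation. For $\partial(v,u)$, take a shortest cycle $C$ through $uv$, of length $g^*\in\{4,5\}$.
\begin{itemize}
\item If $g^*=4$, then $C=uabv$ with $a\in S_{1,2}$ and $b\in S_{2,1}$. Since $ab$ is an edge, $a\in A$ and $b\in B$, so $a\in A_1$ and $b\in B_1$. The path $v\to b\to a\to u$ has length $3=g^*-1$.
\item If $g^*=5$, then $C=uacbv$ with $a\in A$, $b\in B$ and $c\in S_{2,2}$. Here $a\in A_1\cup A_2$ and $b\in B_1\cup B_2$. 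In fact $ab\notin E$ (otherwise $g^*\le 4$), so $a\in A_2$ and $b\in B_2$. The path $v\to b\to c\to a\to u$ has length $4=g^*-1$.
\end{itemize}
A shortest directed $v$–$u$ path cannot be shorter than $g^*-1$: together with $uv$ it would close a cycle through $uv$ shorter than $g^*$. Hence $\partial(v,u)=g^*-1$. For $w\in S_{2,2}$, $w$ has a neighbour $a\in S_{1,2}$, and $wa$ goes from $S_{2,2}$ into $S_{1,2}$, so $a\in A$. Then $a\in A_2$ or $a\in A_1$, and both give $a\to u$, so $\partial(w,u)=2$. Symmetrically, a neighbour $b\in B_1\cup B_2$ of $w$ gives $v\to b\to w$, so $\partial(v,w)=2$.

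\textbf{Part (ii).} Each entry is one explicit path; I would verify them in dependency order.
\begin{itemize}
\item $A_1,A_2,A_3$: $\partial(w,u)=1$. For $\partial(v,\cdot)$, use $v\to b\to w$ with $b\in B$ for $A_1$; use $v\to B\to S_{2,2}\to w$ for $A_2$, via part (i); and use $A_1\to A_3$ for $A_3$.
\item $I_1$: $w\to A\to u$. Here $w$ has a neighbour in $A$ because $d(w,u)=2$, and that neighbour is in $\cup_{i\le 8}A_i$; if it were in $A_9$ this would contradict Observation \ref{obs1}, as $A_9$ has no neighbour in $\cup_{i\le5}I_i$. For the other direction, $v\to B\to S_{2,2}\to w$ or $v\to B\to J\to w$ gives length $3$.
\item $I_3$: reach it through $I_1\to I_3$, giving length $4$; $\partial(w,u)=2$ as before.
\item $A_4$, $A_6$: these come from $I_1\to A_4$ and $I_3\to A_6$, giving lengths $4$ and $5$.
\item $J_1,J_3,B_1,B_2,B_3,B_4,B_6$: the mirror images. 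For $B_1,B_2,B_3$ we have $v\to w$ directly, and $w\to A\to u$ or $w\to S_{2,2}\to A\to u$ or $B_3\to B_1\to A\to u$. For $J_1$: $v\to B\to w$, and $w\to S_{2,2}\to A\to u$ or $w\to I_1\to A\to u$. For $J_3$: $J_3\to J_1$. For $B_4\to J_1$ and $B_6\to J_3$: add $1$.
\end{itemize}

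The main obstacle is the bookkeeping, since arcs exist only between specific subsets. At each step one must confirm that the neighbour used lies in the correctly oriented subset: for instance, that $w\in J_1$ has a neighbour $b$ with $v\to b$, i.e.\ $b\in\cup_{i\le 9}B_i$ rather than $B_{10}$. This follows from Observation \ref{obs1}, since $B_{10}$ has no neighbour in $J$.
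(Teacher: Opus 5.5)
Your proposal is correct and follows the same route as the paper: every entry is read off from explicit directed paths in Orientation \ref{cons1} (e.g.\ $v\to B_1\to A_1\to u$ when $g^*=4$, $v\to B_1\cup B_2\to S_{2,2}\to A_1\cup A_2\to u$ when $g^*=5$, then $I_1\to I_3$, $I_1\to A_4$, $I_3\to A_6$ and the mirror chains for $J$ and $B$). You also supply details the paper leaves implicit, such as the lower bound $\partial(v,u)\ge g^*-1$ and the check that the neighbours used avoid $A_9$ and $B_{10}$. One harmless slip: $ab\notin E$ alone does not give $a\in A_2$, since you need $a$ to have no neighbour in $B$ at all (true, because any such neighbour $b'$ yields the $4$-cycle $uab'vu$); in any case $a\in A_1\cup A_2$ and $b\in B_1\cup B_2$ already suffice for your path.
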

		
		\begin{proof}
			By Orientation \ref{cons1}, (i) holds since $u\rightarrow v$, $v\rightarrow B_1\rightarrow A_1\rightarrow u$ appears only if $g^*=4$, and $v\rightarrow B_1\cup B_2\rightarrow S_{2,2}\rightarrow A_1\cup A_2\rightarrow u$. 
			
			For $w\in I_1$, we have $N(w)\cap (S_{2,2}\cup J_1)\neq \emptyset$, which implies $\partial(w,u)\leq 2$ and $\partial(v,w)\leq 3$ since $v\rightarrow \cup_{i=1}^4 B_i\rightarrow S_{2,2}\cup J_1\rightarrow w\rightarrow \cup_{i=1}^4 A_i\rightarrow u$ under Orientation \ref{cons1}. 
			
			For $w\in I_3$, $N(w)\cap I_1\neq\emptyset$, which implies $\partial(w,u)\leq 2$ and $\partial(v,w)\leq \partial(v,I_1)+\partial(I_1,w)\leq 4$ since $I_1\rightarrow w\rightarrow \cup_{i=1}^6 A_i\rightarrow u$ under Orientation \ref{cons1}.
			
			For $w\in A_1\cup A_2\cup A_3\cup A_4\cup A_6$, we have $\partial(w,u)=1$ since $w\rightarrow u$ under Orientation \ref{cons1}. If $w\in A_1$, then $N(w)\cap B_1\neq\emptyset$, which implies $\partial(v,w)\leq 2$ by $v\rightarrow B_1\rightarrow A_1$. If $w\in A_2$, then $N(w)\cap S_{2,2}\neq\emptyset$, which implies $\partial(v,w)\leq \partial(v,S_{2,2})+\partial(S_{2,2},w)\leq 3$ by (i) and $S_{2,2}\rightarrow w$.	If $w\in A_3$, then $\partial(v,w)\leq \partial(v,A_1)+\partial(A_1,w)\leq 3$ by $N(w)\cap A_1\neq\emptyset$ and $A_1\rightarrow w$. If $w\in A_4$, then $\partial(v,w)\leq \partial(v,I_1)+\partial(I_1,w)\leq 4$ by $I_1\rightarrow w$. If $w\in A_6$, then $\partial(v,w)\leq \partial(v,I_3)+\partial(I_3,w)\leq 5$ by $I_3\rightarrow w$.
			
			Similarly, for $w\in J_1\cup J_3\cup  B_1\cup B_2\cup B_3\cup B_4\cup B_6$, the conclusion holds.
		\end{proof}
	\end{samepage}


	\begin{Orientation}\label{cons4}
	Let $w \in X$. If $|[w, S_{2,2}]| \geq 2$, we orient the edges of $[w, S_{2,2}]$ in two ways. If $|[w, S_{2,2}]| =1$, let: \\{\rm (i)} $w \rightarrow S_{2,2}$ if $N(w) \cap J \neq \emptyset$, \\{\rm (ii)} $S_{2,2} \rightarrow w$ if $N(w) \cap J = \emptyset$ and $N(w) \cap I \neq \emptyset$, \\{\rm (iii)}  $S_{2,2}\rightarrow w\rightarrow K\cup M_1$ if $N(w)\cap (K\cup M_1)\neq\emptyset$ and $N(w)\cap (I\cup J)=\emptyset$, \\{\rm (iv)} $w\rightarrow S_{2,2}$ if $N(w)\cap (I\cup J\cup K\cup M_1)=\emptyset$.
	\end{Orientation}
	

\begin{claim}\label{cla4}
For every $w\in X$, we have $|[w, S_{2,2} \cup I \cup J\cup K\cup L]|\geq 2$, and 
\\{\rm(i)} $\partial(w,u)\leq 3$ and $\partial(v,w)\leq 3$ if $|[w,S_{2,2}]|=0$, or $|[w,S_{2,2}]|\geq 2$, or $N(w)\cap (I\cup J)\neq\emptyset$;
\\{\rm(ii)} $\partial(w,u)\leq 5$ and $\partial(v,w)\leq 3$ if $|[w,S_{2,2}]|=1$, $N(w)\cap (K\cup M_1) \neq\emptyset$, $N(w)\cap (I\cup J)=\emptyset$;
\\{\rm(iii)} $\partial(w,u)\leq 3$ and $\partial(v,w)\leq 5$ if $|[w,S_{2,2}]|=1$ and $N(w)\cap (I\cup J\cup K\cup M_1)=\emptyset$.
\end{claim}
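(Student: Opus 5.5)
The first assertion follows from the definitions. A vertex $w\in X=S_{3,3}-X'$ is by definition not in $X'$, so $|[w,S_{2,2}\cup I\cup J\cup K\cup L]|\neq 1$. It remains to rule out $0$. Since $d(w,u)=3$, $w$ has a neighbour at distance $2$ from $u$. Such a neighbour lies in $S_{2,1}\cup S_{2,2}\cup S_{2,3}$, and $S_{2,1}$ is excluded because its vertices are at distance $1$ from $v$. So $w$ has a neighbour in $S_{2,2}\cup I\cup I'$. A neighbour in $I'$ is impossible, since $N(I')\subseteq S_{2,3}\cup A$. Hence $[w,S_{2,2}\cup I\cup J\cup K\cup L]\neq\emptyset$, and together with the above the count is at least $2$.

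For (i)--(iii) I will use Orientation~\ref{cons1}, Orientation~\ref{cons4} and Claim~\ref{cla1}. From Claim~\ref{cla1}(i) we have $\partial(v,S_{2,2})\le 2$ and $\partial(S_{2,2},u)\le 2$. From Orientation~\ref{cons1} we have $J\to S_{3,3}$, $S_{3,3}\to I$, $v\to\cup_{i=1}^9B_i$, $B\to J$ and $\cup_{i=1}^5I_i\to\cup_{i=1}^8A_i\to u$. The useful observation is that if $w$ has a neighbour $z\in J$, then $z\in J_1\cup J_2$: we are in $J\cap N(S_{3,3})$, so $z\in J_1$ if $z$ is adjacent to $S_{2,2}\cup I$, and $z\in J_2$ otherwise. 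Symmetrically, a neighbour $z\in I$ lies in $I_1\cup I_2$. For $z\in J_1\cup J_2$ we have $v\to B_4\cup B_5$ with $B_4\cup B_5\to z$, or $z$ is reached from $v$ via some $B_i$ with $i\le 9$ in two steps. Indeed $z\in S_{3,2}$ has a neighbour in $S_{2,1}$, and such a neighbour cannot be in $B'$ because $N(B')\subseteq S_{2,1}\cup\{v\}$; so it lies in $B$ and is oriented $B\to J$. Since $v\to\cup_{i=1}^9B_i$, we get $\partial(v,z)\le 2$. Symmetrically, $\partial(z',u)\le 2$ for $z'\in I_1\cup I_2$.

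Case (i) splits three ways.
\begin{itemize}
\item If $|[w,S_{2,2}]|\ge 2$, the two edges are oriented in both directions. This gives $v\leadsto S_{2,2}\to w$ of length at most $3$ and $w\to S_{2,2}\leadsto u$ of length at most $3$.
\item If $|[w,S_{2,2}]|=1$ and $N(w)\cap(I\cup J)\ne\emptyset$, use Orientation~\ref{cons4}(i),(ii). When $w\to S_{2,2}$ and $N(w)\cap J\neq\emptyset$, we get $\partial(w,u)\le 3$ through $S_{2,2}$ and $\partial(v,w)\le 3$ through $J\to w$. When $S_{2,2}\to w$ and $N(w)\cap I\ne\emptyset$, we get $\partial(v,w)\le 3$ and $\partial(w,u)\le 3$ through $w\to I$.
\item If $|[w,S_{2,2}]|=0$, then by Proposition~\ref{pro1}(ii) $w$ has neighbours in both $I$ and $J$, and the same two paths apply.
\end{itemize}

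Case (ii): Orientation~\ref{cons4}(iii) gives $S_{2,2}\to w$, so $\partial(v,w)\le 3$. We also have $w\to K\cup M_1$. For a neighbour in $K$, it lies in $K_1$ (since $K_1=K\cap N(S_{3,3}\cup L)$), and $\partial(K,u)\le 3$ via $K\to I$. That bound needs justifying: a vertex of $K\subseteq S_{3,4}$ has a neighbour in $S_{2,3}$, which must be in $I$ because $N(I')\subseteq S_{2,3}\cup A$. Then $I\to A$ gives the bound, giving $\partial(w,u)\le 4$. For a neighbour in $M_1$, use $M\to K$, giving $1+1+3=5$.

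Case (iii): Orientation~\ref{cons4}(iv) gives $w\to S_{2,2}$, so $\partial(w,u)\le 3$. For $\partial(v,w)$, $w$ has no neighbours in $I\cup J\cup K\cup M_1$. Since $|[w,S_{2,2}\cup\cdots]|\ge 2$ while $|[w,S_{2,2}]|=1$, $w$ has a neighbour in $L$, and $L\to S_{3,3}$. A vertex of $L\subseteq S_{4,3}$ has a neighbour in $S_{3,2}$, which lies in $J$ (again excluding $J'$), and $J\to L_1\cup L_2\cup L_4$. This gives $v\to B\to J\to L\to w$ of length $4$ for $L\setminus L_3$.

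The main obstacle is the subcase where that neighbour lies in $L_3$, where the orientation is $L'\cup(J-J_1)\to L_3$. The route $v\to B\to (J-J_1)\to L_3\to w$ has length $4$ if the $J$-neighbour of the $L_3$ vertex is not in $J_1$. Otherwise I would route through $L'$: we have $J\to L'\cap N(L_3)$, which gives $v\to B\to J\to L'\to L_3\to w$ of length $5$. This matches the bound $5$, and I expect checking this corner to be the delicate step.
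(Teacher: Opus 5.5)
Your proof is correct and follows the paper's argument essentially step for step. It uses the same case split on $|[w,S_{2,2}]|$ and $N(w)$ and the same directed routes: through $J\to w\to I$ or $S_{2,2}$ in (i), through $K$ or $M_1\to K$ in (ii), and through $J\to L$ or $J\to L'\cap N(L_3)\to L_3$ in (iii). Your exclusion of the zero-edge case, via the fact that $I'$ has no neighbours in $S_{3,3}$, fills in what the paper leaves to ``the definition of $X$''.

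One small tightening is needed. Orientation~\ref{cons1} gives $v\to\cup_{i=1}^9B_i$ and $\cup_{i=1}^5I_i\to\cup_{i=1}^8A_i$, not $v\to B$ and $I\to A$. So you should note that $B$-neighbours of $J$ avoid $B_{10}$, and that $I$-neighbours of $K$ lie in $I-I_6$, whose $A$-neighbours avoid $A_9$. Both facts are immediate from the definitions of $B_{3+j}$, $I_4$, $I_5$ and $A_{3+j}$.
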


\begin{proof}
By the definition of $X$, $|[w, S_{2,2} \cup I \cup J\cup K\cup L]|\geq 2$.

If $|[w,S_{2,2}]|=0$, then $N(w)\cap (I_1\cup I_2)\neq\emptyset$ and $N(w)\cap (J_1\cup J_2)\neq\emptyset$ by Proposition \ref{pro1}, and thus $v\rightarrow (B-B_{10})\rightarrow J_1\cup J_2\rightarrow w\rightarrow I_1\cup I_2\rightarrow (A-A_9) \rightarrow u$ under Orientation \ref{cons1}. Hence $\partial(v,w)\leq 3$ and $\partial(w,u)\leq 3$. 

If $|[w, S_{2,2}]| \geq 2$, by Orientations \ref{cons1}-\ref{cons4}, there exists $w_1,w_2\in S_{2,2}$ such that $w_1\rightarrow w\rightarrow w_2$, and thus $\partial(v,w)\leq \partial(v,w_1)+\partial(w_1,w)= 3$ and $\partial(w,u)\leq \partial(w,w_2)+\partial(w_2,u)=3$ by (i) of Claim \ref{cla1}. It remains to consider the case $|[w, S_{2,2}]|=1$, where the argument follows from Orientations \ref{cons1}-\ref{cons4}.

If $N(w)\cap (I\cup J)\neq\emptyset$, for $N(w)\cap J\neq\emptyset$, we have $v\rightarrow (B-B_{10})\rightarrow J_1\cup J_2\rightarrow w\rightarrow S_{2,2}$, $\partial(v,w)\leq 3$ and $\partial(w,u)\leq 3$; for $N(w)\cap I\neq\emptyset$ and $N(w)\cap J=\emptyset$, we have $S_{2,2}\rightarrow w\rightarrow I_1\cup I_2\rightarrow (A-A_9)\rightarrow u$, $\partial(v,w)\leq 3$ and $\partial(w,u)\leq 3$. 

If $N(w)\cap (K\cup M_1)\neq\emptyset$ and $N(w)\cap (I\cup J)=\emptyset$, we have $S_{2,2}\rightarrow w\rightarrow K\cup M_1$, $M_1\rightarrow K\rightarrow (I-I_6)\rightarrow (A-A_9)\rightarrow u$, and thus $\partial(v,w)\leq 3$ and $\partial(w,u)\leq 5$ by (i) of Claim \ref{cla1}. It remains to assume $N(w)\cap (I\cup J\cup K\cup M_1)=\emptyset$. Then $N(w)\cap L\neq\emptyset$ by $|[w, S_{2,2} \cup I \cup J\cup K\cup L]|\geq 2$. If $N(w)\cap (L_1\cup L_2\cup L_4)\neq\emptyset$, then $v\rightarrow (B-B_{10})\rightarrow J\rightarrow L_1\cup L_2\cup L_4\rightarrow w\rightarrow S_{2,2}$, which implies $\partial(w,u)\leq 3$ and $\partial(v,w)\leq 4$; if $N(w)\cap L_3\neq\emptyset$, then $v\rightarrow (B-B_{10})\rightarrow J\rightarrow L'\cap N(L_3)\rightarrow L_3\rightarrow w\rightarrow S_{2,2}$, which implies $\partial(w,u)\leq 3$ and $\partial(v,w)\leq 5$.
\end{proof}

\begin{proposition}\label{prop1}
Let $w\in X'_4$ such that $N(w)\cap (\cup_{i=1}^3 X'_i)=\emptyset$.  \\
{\rm (i)} Then  $N(w)\cap S_{2,2}\neq\emptyset$ and $N(w)\cap (M'\cup M_3)\neq\emptyset$.  \\      
{\rm (ii)} Let $w_0\in N(w)\cap S_{2,2}$, $w_1\in N(w)\cap(M'\cup M_3)$ with $|N(w_1)\cap S_{3,3}|$ maximized. If $|N(w_1)\cap S_{3,3}|=1$, then $g^*=5$, and the shortest cycle through $w_0w$ is the $5$-cycle $w_0wy_1y_2y_3w_0$, where $y_1\in M'\cup M_3$, $y_2\in S_{4,4}$, $y_3\in S_{3,3}$.
\end{proposition}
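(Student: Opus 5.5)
The plan is to first pin down $N(w)$ completely from the definitions, then read off both parts.

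\textbf{Step 1 (locating $N(w)$).} Since $w\in X'$, Proposition~\ref{pro1}(ii) gives $|[w,S_{2,2}]|=1$ and $N(w)\cap(I\cup J\cup K\cup L)=\emptyset$. Because $d(w,u)=d(w,v)=3$, every neighbour of $w$ lies in some $S_{i,j}$ with $i,j\in\{2,3,4\}$. The pairs $(2,4)$ and $(4,2)$ cannot occur, since $|i-j|\le 1$ always holds because $u$ and $v$ are adjacent. So $N(w)\subseteq S_{2,2}\cup S_{2,3}\cup S_{3,2}\cup S_{3,3}\cup S_{3,4}\cup S_{4,3}\cup S_{4,4}$. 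We then exclude these sets one at a time.
\begin{itemize}
\item $I'$ is excluded because vertices of $I'$ have all neighbours in $S_{2,3}\cup A$. Similarly $J'$, $K'$ and $L'$ are excluded by their definitions. Together with $N(w)\cap(I\cup J\cup K\cup L)=\emptyset$, this rules out $S_{2,3}\cup S_{3,2}\cup S_{3,4}\cup S_{4,3}$.
\item $N(w)\cap X=\emptyset$, since otherwise $w\in X'_3$.
\item $N(w)\cap(M_1\cup M_2)=\emptyset$, since otherwise $w\in X'_1\cup X'_2$.
\item $N(w)\cap X'=\emptyset$, because $w$ has no neighbour in $\cup_{i=1}^3X'_i$ by hypothesis and is isolated in $G[X'-\cup_{i=1}^3X'_i]$.
\end{itemize}
Hence $N(w)$ consists of exactly one edge to $S_{2,2}$ together with edges to $M'\cup M_3$.

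\textbf{Step 2 (part (i)).} $N(w)\cap S_{2,2}\ne\emptyset$ since $|[w,S_{2,2}]|=1$. If $N(w)\cap(M'\cup M_3)$ were empty, then $w$ would have degree $1$ and its unique edge would be a bridge, which is impossible. This proves (i).

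\textbf{Step 3 (structure of $w_1$ in part (ii)).} Suppose the maximum of $|N(w_1)\cap S_{3,3}|$ equals $1$, so every $y\in N(w)\cap(M'\cup M_3)$ has $w$ as its only neighbour in $S_{3,3}$. We identify where such a $y$ can have neighbours.
\begin{itemize}
\item If $y\in M'$, then by Proposition~\ref{pro1}(i) its single edge into $S_{3,3}\cup K\cup L$ is the edge to $w$.
\item If $y\in M_3$, then $y\notin M_1\cup M_2$, so $N(y)\cap(K\cup L)=\emptyset$, and its only $S_{3,3}$-neighbour is $w$ (possibly via parallel edges).
\end{itemize}
Since $y\in S_{4,4}$ has neighbours only in $S_{3,3}\cup S_{3,4}\cup S_{4,3}\cup S_{4,4}$, we need to rule out $K'$ and $L'$ as well. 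This follows because $K'$ only sees $S_{3,4}\cup I$, and $L'$ only sees $S_{4,3}\cup J$. Therefore all neighbours of $y$ other than $w$ lie in $S_{4,4}$.

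\textbf{Step 4 (the cycle through $w_0w$).} Let $C$ be a shortest cycle through the edge $w_0w$. Since $|[w,S_{2,2}]|=1$, $C$ leaves $w$ along an edge to some $y_1\in M'\cup M_3$. It cannot return from $y_1$ to $w$, since $C$ is a cycle through $w$ and $y_1$, so its next vertex is some $y_2\in S_{4,4}$. Now $C$ must get from $y_2$, at distance $4$ from $u$, back to $w_0$, at distance $2$ from $u$. Distances change by at most $1$ per edge, so at least two more edges are needed, and $|C|\ge5$. On the other hand $|C|=l_G(w_0w)\le g^*\le5$. Hence $|C|=5$, $g^*=5$, and $C=w_0wy_1y_2y_3w_0$. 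The vertex $y_3$ is adjacent to both $y_2\in S_{4,4}$ and $w_0\in S_{2,2}$, which forces $d(y_3,u)=d(y_3,v)=3$, i.e.\ $y_3\in S_{3,3}$.

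\textbf{Main obstacle.} The step needing most care is Step 1, the exhaustive exclusion of the neighbour classes of $w$. In particular one has to check that the ``primed'' sets $I',J',K',L'$ are incompatible with adjacency to a vertex of $S_{3,3}$ (or, in Step 3, of $S_{4,4}$) purely by their defining neighbourhood conditions.
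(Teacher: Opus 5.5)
Your proof is correct and follows the same route as the paper. You use Proposition~\ref{pro1}(ii) and the definitions of $X'_1,\dots,X'_4$ to get $N(w)\subseteq S_{2,2}\cup M'\cup M_3$, bridgelessness for (i), and the fact that each $y\in N(w)\cap(M'\cup M_3)$ has all neighbours other than $w$ in $S_{4,4}$ to force the shortest cycle through $w_0w$ to have length exactly $5$; along the way you supply details the paper leaves implicit, namely the exclusion of $I',J',K',L'$ and the distance count showing $|C|\ge 5$.
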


\begin{proof} (i) By Proposition \ref{pro1}, we have  $|[w,S_{2,2}]|=1$ and $N(w)\cap (I\cup J\cup K\cup L)=\emptyset$. Since $w\in X'_4$ and $N(w)\cap (\cup_{i=1}^3 X'_i)=\emptyset$, $N(w)\cap (S_{3,3}\cup M_1\cup M_2)=\emptyset$. Thus $\emptyset\neq N(w)-S_{2,2}\subseteq M'\cup M_3$ since $G$ is bridgeless. 
	
	(ii) For any $x\in N(w)\cap (M'\cup M_3)$, we have $N(x)\cap S_{3,3}=\{w\}$ by the maximality of $|N(w_1)\cap S_{3,3}|$, and $N(x)\cap (K\cup L)=\emptyset$.  Therefore, the shortest cycle containing $w_0w$ is a $5$-cycle $C=w_0wy_1y_2y_3w_0$ by $g^*\in\{4,5\}$, where $y_1\in M'\cup M_3$, $y_2\in S_{4,4}$ and $y_3\in S_{3,3}$, which implies $g^*=5$.
\end{proof}

By (ii) of Proposition \ref{pro1} and the definition of $X'$, we have $|N(w)\cap S_{2,2}|=1$ for any $w\in X'$. By the definition of $X'_5$, $G[X'_5]$ has no isolated vertices. We now orient some edges incident to $X'$ as follows.

\begin{Orientation}\label{cons5}
{\rm (i)}	Let $M_2\rightarrow X'_2\rightarrow S_{2,2}\rightarrow X'_1\rightarrow M_1$, $X\rightarrow X'_3\rightarrow S_{2,2}$, $R=S_{2,2}$ and $S=X'_5$. We then give an $R$-$S$ orientation for the edges of $G[S]$ and $[R,S]$. 

{\noindent\rm (ii)} Let  $w\in X'_4$ be an undirected vertex. Using the following iterative procedure, we assign orientations to some edges of $G$. If $N(w)\cap (\cup_{i=1}^3 X'_i)\neq\emptyset$, then let $X'_1\rightarrow w\rightarrow S_{2,2}$ if $N(w)\cap X'_1\neq\emptyset$, and let $ S_{2,2}\rightarrow w\rightarrow X'_2\cup X'_3$ otherwise. It remains to assume $N(w)\cap (\cup_{i=1}^3 X'_i)=\emptyset$. By Proposition \ref{prop1}, let $w_0\in N(w)\cap S_{2,2}$, and $w_1\in N(w)\cap(M'\cup M_3)$ such that $|N(w_1)\cap S_{3,3}|$ is maximum.

If $|N(w_1)\cap S_{3,3}|\geq 2$, then we take $w_2\in N(w_1)\cap S_{3,3}-\{w\}$ and $w_3\in N(w_2)\cap (S_{2,2}\cup I\cup J)$ such that $w_1w_2w_3$ is a mixed path by iteration. Since $w$ is undirected, we have $w_0ww_1w_2w_3$ is a mixed path or cycle, and thus we orient the edges of $w_0ww_1w_2w_3$ by a $B$-$C$ or $B$-$P$ orientation.

If $|N(w_1)\cap S_{3,3}|= 1$, then the shortest cycle containing $w_0w$ is a $5$-cycle $C=w_0wy_1y_2y_3w_0$, where $y_1\in M'\cup M_3$, $y_2\in S_{4,4}$, $y_3\in S_{3,3}$ by Proposition \ref{prop1}. If $y_1y_2$ is oriented, there exists a bidirectional path or cycle 
$w'_0w'y_1y_2y'_3y'_4$, where $w'\in S_{3,3}$, which implies $w'=w$ and $w$ is a directed vertex since $|N(y_1)\cap S_{3,3}|\leq |N(w_1)\cap S_{3,3}|=1$, a contradiction. Hence $w_0wy_1y_2$ is undirected since $w$ is undirected. If $C$ is mixed, then orient the edges of $C$ by a $B$-$C$ orientation. Otherwise, $y_2y_3$ is oriented, and there exists a bidirectional path 
$y_2y_3y''_4$ with $y''_4\in S_{2,2}\cup I\cup J$, yielding a mixed path or cycle
$C'=w_0wy_1y_2y_3y''_4$. Orient the edges of $C'$ by a $B$-$P$ or $B$-$C$ orientation.
\end{Orientation}

The directed graph obtained from Orientation \ref{cons5} is denoted by $\rbjt{D_1}$.

\begin{claim}\label{cla5}
{\rm (i)} For any $w\in X'_1$, $\partial(w,u)\le 5$ and $\partial(v,w)\le 3$;\\
{\rm (ii)} For any $w\in X'_2\cup X'_3$, $\partial(w,u)\le 3$ and $\partial(v,w)\le 6$;\\
{\rm (iii)} For any $w\in X'_4$, $\partial(w,u)\le g^*+1$ and $\partial(v,w)\le g^*+1$;\\
{\rm (iv)} For any $w\in X'_5$, $\partial(w,u)\le 4$ and $\partial(v,w)\le 4$.
\end{claim}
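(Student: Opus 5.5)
We verify each part by exhibiting explicit directed paths in $\rbjt{D_1}$ from the arcs fixed in Orientations \ref{cons1}, \ref{cons4}, \ref{cons5}. We combine them with the bounds of Claim \ref{cla1}(i) ($\partial(w,u)=\partial(v,w)=2$ for $w\in S_{2,2}$), Claim \ref{cla4}, and Table \ref{Table1}. Throughout, each $w\in X'$ has exactly one neighbour $w_0\in S_{2,2}$ (Proposition \ref{pro1}(ii)).

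\textbf{(i) and (ii).} For $w\in X'_1$ we have $S_{2,2}\rightarrow w\rightarrow M_1$, so $\partial(v,w)\le 2+1=3$. For the return we follow $M_1\rightarrow K\rightarrow (I-I_6)\rightarrow (A-A_9)\rightarrow u$. This uses that a vertex of $M_1$ has a $K$-neighbour, that $K\rightarrow I$ with the $I$-endpoint in $I_1\cup\dots\cup I_5$ (a $K$-neighbour forces membership in $I_4$ or $I_5$ or earlier), and that $\cup_{i=1}^5 I_i\rightarrow \cup_{i=1}^8A_i\rightarrow u$. This gives $\partial(w,u)\le 5$.

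For $w\in X'_2$, $w\rightarrow S_{2,2}$ gives $\partial(w,u)\le 3$. In the other direction, $w$ has a neighbour $m\in M_2$, which has a neighbour in $L$, and $L\rightarrow M\rightarrow X'_2$. A path $v\rightarrow (B-B_{10})\rightarrow J\rightarrow L_1\cup L_2\cup L_4\rightarrow m\rightarrow w$ gives length $5$. If the $L$-neighbour lies in $L_3$, we insert $J\rightarrow L'\cap N(L_3)\rightarrow L_3$, which gives $6$.

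For $w\in X'_3$, $w\rightarrow S_{2,2}$ again gives $3$. The inward path is $v\rightarrow\cdots\rightarrow x\rightarrow w$ with $x\in X$, using $\partial(v,x)\le 5$ from Claim \ref{cla4}, which gives $6$.

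\textbf{(iv).} Apply Lemma \ref{l1} with $R=S_{2,2}$ and $S=X'_5$. Here $G[S]$ has no isolated vertex, and every vertex of $S$ has a neighbour in $R$. So $\theta(w,S_{2,2})\le 2$, and Claim \ref{cla1}(i) gives $4$ in both directions.

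\textbf{(iii).} This case needs the iterative procedure. If $N(w)\cap X'_1\ne\emptyset$, then $X'_1\rightarrow w\rightarrow S_{2,2}$, so $\partial(w,u)\le3$ and $\partial(v,w)\le 3+1=4$. Otherwise $S_{2,2}\rightarrow w\rightarrow X'_2\cup X'_3\rightarrow S_{2,2}$, so $\partial(v,w)\le3$ and $\partial(w,u)\le 1+3=4$. Since $g^*\ge4$, both cases give at most $g^*+1$.

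Otherwise $w$ lies on the bidirectional path or cycle built in Orientation \ref{cons5}(ii). On a $B$-$C$ oriented cycle through $w_0$ of length at most $g^*$, $w$ reaches $w_0$ and $w_0$ reaches $w$ within $g^*-1$ steps going around. Since $w_0$ is adjacent to $w$, one direction costs $1$ and the other $g^*-1$, so the total is at most $2+(g^*-1)=g^*+1$. For the bidirectional path $w_0ww_1w_2w_3$, or the path $C'=w_0wy_1y_2y_3y''_4$, both endpoints lie in $S_{2,2}\cup I\cup J$. So $w$ is one step from $w_0$ in one direction and three or four steps from the other endpoint in the other direction.

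The inductive hypothesis of the iteration says $w_1w_2w_3$ was mixed, so its orientation is consistent with the already-directed vertices. We then need the endpoint bounds: at most $2$ for $S_{2,2}$, at most $3$ toward $u$ for $J$ and $I$ endpoints in the appropriate direction, read off Table \ref{Table1}. With $g^*=5$ in the $C'$ case (Proposition \ref{prop1}(ii)), this gives $\le 4+2$.

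\textbf{Main obstacle.} The hard step is checking that the endpoint $w_3$ or $y''_4$ sits on the correct side, i.e.\ that the $B$-$P$ orientation direction matches a short directed connection to $u$ or from $v$. When it is a $J$-vertex oriented into the path, or an $I$-vertex oriented out of the path, Table \ref{Table1} only controls one direction. I would try first to use that the earlier-oriented edges were themselves part of the same iterative scheme, to pick the favourable choice, and fall back to bounding via $\theta\le 2$ to $S_{2,2}$ when needed.
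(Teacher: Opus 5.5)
Your parts (i), (ii), (iv) and the first sub-case of (iii), where $w$ has a neighbour in $X'_1\cup X'_2\cup X'_3$, are correct and follow the paper's argument. The gap is in the remaining case of (iii), exactly at the point you flag as the ``main obstacle''. You never determine which way the $B$-$P$ orientation runs along $w_0ww_1w_2w_3$ (or along $C'$), and without that the bound $g^*+1$ does not follow. For example, take $g^*=4$ and $w_3\in J_1$. If the path ran $w_0\to w\to w_1\to w_2\to w_3$, you would only get $\partial(w,u)\le 3+\partial(w_3,u)$. Table \ref{Table1} bounds this by $6>g^*+1$.

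Your proposed remedies do not close this. By the time Orientation \ref{cons5}(ii) is applied there is no choice left, since a $B$-$P$ orientation must agree with arcs already present. Also, $w\in X'_4$ is not part of any $R$-$S$ configuration, so there is no $\theta\le 2$ bound to $S_{2,2}$ to fall back on. The vertex $w$ is one step from $w_0$ in only one direction.

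The missing observation is that the unfavourable configurations cannot occur, because Orientation \ref{cons1} already fixes $S_{3,3}\to I$ and $J\to S_{3,3}$. Since $w_2\in S_{3,3}$ (resp.\ $y_3\in S_{3,3}$), the edge $w_2w_3$ (resp.\ $y_3y''_4$) is already directed whenever its far endpoint lies in $I\cup J$. The whole path must then be oriented consistently with that arc.

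\begin{itemize}
\item If $w_3\in I$, then $w_3\in I_1\cup I_2$, since it has a neighbour in $S_{3,3}$. The path runs $w_0\to w\to w_1\to w_2\to w_3$, and $w_3\to\cup_{i=1}^5A_i\to u$. This gives $\partial(w,u)\le 3+2$ and $\partial(v,w)\le 2+1$.
\item If $w_3\in J$, then $w_3\in J_1\cup J_2$. The path runs $w_3\to w_2\to w_1\to w\to w_0$, and $v\to\cup_{i=1}^5B_i\to w_3$. This gives $\partial(v,w)\le 2+3$ and $\partial(w,u)\le 1+2$.
\item If $w_3\in S_{2,2}$, either direction works by Claim \ref{cla1}(i).
\end{itemize}

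The same argument applies to $C'$, where $g^*=5$ and $w$ is four steps from $y''_4$; it gives $6=g^*+1$. Note also which endpoint bounds are actually used: $\partial(I_1\cup I_2,u)\le 2$ and $\partial(v,J_1\cup J_2)\le 2$. Both come directly from Orientation \ref{cons1}. They are not ``at most $3$ toward $u$'' bounds read off Table \ref{Table1}, which does not even list $I_2$ or $J_2$.
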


\begin{proof}
By (ii) of Proposition \ref{pro1}, for any $w\in X'$, $N(w)\cap S_{2,2}\neq\emptyset$.

For $i\in \{1,2\}$ and $w\in X'_i$, we have $N(w)\cap M_i\neq\emptyset$. For $w\in X'_3$, we have $N(w)\cap X\neq\emptyset$. By Orientations \ref{cons1} and \ref{cons5}, we have $v\rightarrow B_1\cup B_2\rightarrow S_{2,2}\rightarrow X'_1\rightarrow M_1\rightarrow K\rightarrow (I-I_6)\rightarrow (A-A_9)\rightarrow u$, which implies (i) holds. In addition, we have $v\rightarrow (B-B_{10})\rightarrow J\rightarrow (L-L_3)\rightarrow M_2\rightarrow X'_2\rightarrow S_{2,2}\rightarrow A_1\cup A_2\rightarrow u$, and $v\rightarrow (B-B_{10})\rightarrow J\rightarrow L'\cap N(L_3)\rightarrow L_3\rightarrow M_2\rightarrow X'_2$. By classifying the neighbors of $N(X'_2)\cap M_2$ as lying in $L- L_3$ or in $L_3$, we obtain the result for $X_2'$.  Similarly, $X\rightarrow X'_3\rightarrow S_{2,2}$ and Claim \ref{cla4} settle the case $w\in X'_3$. Hence (ii) holds. By Lemma \ref{l1} and Orientation \ref{cons5}, (iv) holds. 

Let $w\in X'_4$. If $N(w)\cap (\cup_{i=1}^3 X'_i)\neq\emptyset$, then the desired bounds follow by Orientation \ref{cons5} and (i)-(ii). It remains to assume $N(w)\cap (\cup_{i=1}^3 X'_i)=\emptyset$. 

Let $w_0\in S_{2,2}$ and $w_1\in M'\cup M_3$ be defined under Orientation \ref{cons5}.

If $|N(w_1)\cap S_{3,3}|\geq 2$, then by Orientation \ref{cons5}, there exists a bidirectional path (or cycle) $w_0ww_1w_2w_3$ such that $w_2\in S_{3,3}$ and $w_3\in S_{2,2}\cup I\cup J$. Moreover, $w_3\in S_{2,2}\cup I_1\cup I_2\cup J_1\cup J_2$. Thus $v\rightarrow B_1\cup B_2\rightarrow w_0\rightarrow w\rightarrow w_1\rightarrow w_2\rightarrow w_3\rightarrow \cup_{i=1}^5 A_i\rightarrow u$ when $w_3\in S_{2,2}\cup I_1\cup I_2$, and $v\rightarrow \cup_{i=1}^5 B_i\rightarrow  w_3\rightarrow w_2\rightarrow w_1\rightarrow w\rightarrow w_0\rightarrow A_1\cup A_2\rightarrow u$ when $w_3\in J_1\cup J_2$ by Orientations \ref{cons1} and \ref{cons5}, which implies $\partial(w,u)\leq 5$ and $\partial(v,w)\leq 5$. 

If $|N(w_1)\cap S_{3,3}|=1$, then $g^*=5$ by (ii) of Proposition \ref{prop1}, and there exists a bidirectional path (or cycle) $w_0wy_1y_2y_3y_4$ such that $y_3\in S_{3,3}$ and $y_4\in S_{2,2}\cup I\cup J$ by Orientation \ref{cons5}. Similar to the case $|N(w_1)\cap S_{3,3}|\geq 2$, we obtain $\partial(w,u)\leq 6$ and $\partial(v,w)\leq 6$.
\end{proof}
\vspace{10pt}
To orient the edges incident to $M\cup M'$, we partition $M_3,M'$ as follows.
\begin{align*}
M_{31}&=M_3\cap V(\rbjt{D_1}),\quad M_{32}=M_3-M_{31};\\
M'_1&=M'\cap V(\rbjt{D_1}),\quad M'_2=M'-M'_1;\\
M'_{21} &= \{ w \mid w \in M'_2 \text{ and } N(w) \cap M \neq \emptyset \},\quad \\
M'_{22} &= \{ w \mid w \in M'_2 - M'_{21} \text{ and } w \text{ is isolated in } G[M'_2 - M'_{21}] \},\quad \\
M'_{23} &= \{ w \mid w \in M'_2 - M'_{21} \text{ and } w \text{ is not isolated in } G[M'_2 - M'_{21}] \}.
\end{align*}
For any $w\in M_{32}$, we have $N(w)\cap {(K\cup L)}=\emptyset$, and thus $|[w,S_{3,3}]|\geq 2$ by the definition of $M'$.
By Proposition \ref{pro1} and the definition of $M'_{23}$, we have $N(w)\cap S_{3,3}\neq\emptyset$ for any $w\in M'_{23}$, and $G[M'_{23}]$ has no isolated vertices. We now orient some edges incident to $M'$.

\begin{Orientation}\label{cons6}
Let $M_2\rightarrow X\cup X'_1$, $ X\rightarrow M_1$, $M\rightarrow M'_{21}\rightarrow S_{3,3}$, $S_{3,3}\rightarrow M'_{22}\rightarrow M'_1\cup M'_{21}$, $R=S_{3,3}$, $S=M'_{23}$, $w \in M_{32}$. We orient the edges of $[w, S_{3,3}]$ in two ways, and give an $R$-$S$ orientation for the edges of $G[S]$ and $[R,S]$. 
\end{Orientation}

\begin{claim}\label{cla6}
Table \ref{Table3-1} presents upper bounds on directed distances for $L\cup M\cup M'$.
\begin{table}[htbp]
\centering
\renewcommand{\arraystretch}{1.1}
\setlength{\tabcolsep}{2pt}
$
\begin{array}{|*{12}{c|}} 
	\hline
	\text{for $w$ in}
	&L_1&L_2&L_3&L_4&M_1&M_2&M_{31}\cup M'_1&M_{32}& M'_2\\
	\hline
	\partial(w,u)\le
	&4&5&4&7&4&6&g^*&g^*+2&g^*+3\\
	\hline
	\partial(v,w)\le
	&3&3&4&3&4&5&g^*&7&8\\
	\hline
\end{array}
$\vspace{-5pt}
\caption{Directed-distance bounds for $w\in L\cup M\cup M'$}
\label{Table3-1}
\end{table}
\vspace{-12pt}
\end{claim}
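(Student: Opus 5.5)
We check the entries of Table \ref{Table3-1} class by class. For each class we exhibit an explicit directed path to $u$ and an explicit directed path from $v$, using only arcs fixed in Orientations \ref{cons1}, \ref{cons4}, \ref{cons5} and \ref{cons6}. We then concatenate with the bounds already proved in Claims \ref{cla1}, \ref{cla4} and \ref{cla5}, always in the form $\partial(v,w)\le \partial(v,z)+\partial(z,w)$ and $\partial(w,u)\le\partial(w,z)+\partial(z,u)$ for a suitable neighbour $z$. The order is $L$ first, then $M_1,M_2$, then $M_{31}\cup M'_1$, then $M_{32}$, and finally $M'_2$, because each later class routes through the earlier ones.

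\textbf{Step 1: the class $L$.} The incoming bound comes from $v\to (B-B_{10})\to J\to L_1\cup L_2\cup L_4$, giving $\partial(v,w)\le 3$. For $L_3$ we use $v\to (B-B_{10})\to J\to L'\cap N(L_3)\to L_3$, giving $4$. For the outgoing bound:
\begin{itemize}
\item A vertex of $L_1$ has a $K$-neighbour, and $L\to K\to (I-I_6)\to(A-A_9)\to u$ gives $4$.
\item For $L_2$, the arc $L_2\to L_1$ gives $5$. If instead the neighbour lies in $M_1$, we use $L\to M_1\to K\to\cdots$, again $5$.
\item For $L_3$, we use $L_3\to J_1\to S_{2,2}\cup I_1\to\cdots$, which gives $4$.
\item For $L_4$, Observation \ref{obs1} gives a neighbour in $S_{3,3}\cup M$. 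We use $L\to S_{3,3}$ together with $\partial(S_{3,3},u)$, or $L\to M\to K\to I\to A\to u$. Here the bound $7$ should come from the worst neighbour type.
\end{itemize}

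\textbf{Step 2: the classes $M_1$ and $M_2$.} For $M_1$ we use $M_1\to K\to(I-I_6)\to(A-A_9)\to u$, giving $4$. In the other direction, $v\to B\to J\to L\to M$ gives $4$. For $M_2$ we have $L\to M_2$ (handling $L_3$ separately, as in the proof of Claim \ref{cla5}), which gives at most $5$. For the outgoing bound, Orientation \ref{cons6} gives $M_2\to X\cup X'_1$. Here the key point is that $M_2$ has no $K$-neighbour, so it must reach $S_{3,3}$. We then invoke Claim \ref{cla4} or Claim \ref{cla5}(i) for the $X$- or $X'_1$-neighbour, and we must check that the worst case is $6$. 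If some $X$-neighbour has $\partial(\cdot,u)=5$, we instead exploit the arcs $X\to M_1$ or $X\to X'_3$.

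\textbf{Step 3: the classes $M_{31}\cup M'_1$, $M_{32}$ and $M'_2$.} Vertices of $M_{31}\cup M'_1$ lie on the bidirectional paths or cycles $w_0ww_1w_2w_3$ (respectively $C$ or $C'$) of length at most $g^*+1$ built in Orientation \ref{cons5}(ii). Both ends of these paths connect to $u$ and $v$ through $S_{2,2}\cup I_1\cup I_2\cup J_1\cup J_2$ within $2$ steps. Walking along the path therefore gives $\partial\le g^*$, computed as in the proof of Claim \ref{cla5}(iii) with the vertex one step farther along.

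For $w\in M_{32}$, the two-way orientation of $[w,S_{3,3}]$ gives $z_1\to w\to z_2$ with $z_i\in S_{3,3}$. The bounds then follow from $\partial(S_{3,3},u)\le g^*+1$ and $\partial(v,S_{3,3})\le 6$.

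For $M'_2$ we argue by subclass:
\begin{itemize}
\item $M'_{21}$: use $M\to M'_{21}\to S_{3,3}$.
\item $M'_{22}$: use $S_{3,3}\to M'_{22}\to M'_1\cup M'_{21}$.
\item $M'_{23}$: use Lemma \ref{l1} with $R=S_{3,3}$, so $\theta(w,S_{3,3})\le 2$.
\end{itemize}
In every subclass this gives $\partial(w,u)\le 2+(g^*+1)=g^*+3$ and $\partial(v,w)\le 6+2=8$.

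\textbf{Main obstacle.} The hard step is obtaining a uniform bound $\partial(S_{3,3},u)\le g^*+1$ and $\partial(v,S_{3,3})\le 6$ over all of $S_{3,3}$, covering every subclass of $X$ and $X'$ via Claims \ref{cla4} and \ref{cla5}. The $X'_4$ case with $g^*+1$ is tight, as is the $X'_2$ case with $6$. So in the $M_{32}$ and $M'_2$ estimates we must make sure that the chosen $S_{3,3}$-neighbour is not simultaneously worst in both directions. Whenever the naive sum exceeds the table value, we will pick a better neighbour, which is possible because each such vertex has at least two $S_{3,3}$-edges or an $M$-neighbour.
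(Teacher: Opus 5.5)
Your plan is the paper's: explicit directed routes class by class, concatenated with Claims \ref{cla1}, \ref{cla4} and \ref{cla5}. Step 3 is essentially right. However, two routes in Steps 1--2 do not exist as stated.

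The first failure is at $M_1$. For $w\in M_1$ you obtain $\partial(v,w)\le 4$ from $v\to B\to J\to L\to w$, which presupposes an $L$-neighbour. Proposition \ref{pro1}(i) guarantees one only when $N(w)\cap S_{3,3}=\emptyset$. A vertex of $M_1$ may meet $S_{3,3}\cup K\cup L$ only in $S_{3,3}\cup K$, and then your route is simply absent. The missing case must go through $S_{3,3}$:
\begin{itemize}
\item such a neighbour $x$ lies in $X\cup X'_1$, since an $X'$-vertex adjacent to $M_1$ is in $X'_1$ by definition;
\item the arcs $X\to M_1$ (Orientation \ref{cons6}) and $X'_1\to M_1$ (Orientation \ref{cons5}) point into $w$;
\item $\partial(v,x)\le 3$, because adjacency to $M_1$ excludes case (iii) of Claim \ref{cla4}, and Claim \ref{cla5}(i) covers $X'_1$.
\end{itemize}

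The second failure is at $L_4$: the route $L\to M\to K\to I\to A\to u$ does not exist. Every $L$-vertex with an $M_1$-neighbour was placed in $L_1\cup L_2$, so the $M$-neighbours of $L_4$ lie in $M_2$, which has no $K$-neighbour by definition. This is exactly the case that produces the table entry $7$, the ``worst neighbour type'' you leave unidentified. By Proposition \ref{pro1}(i), each $w'\in M_2$ has an $S_{3,3}$-neighbour, necessarily in $X\cup X'_1\cup X'_2$. The path $w\to w'\to X\cup X'_1\cup X'_2$ then gives $\partial(w,u)\le 1+1+5=7$. Two consequences follow:
\begin{itemize}
\item the outgoing bound for $M_2$ must be proved before the $L_4$ case, or inlined into it;
\item in Step 2 you must add the arc $M_2\to X'_2$ from Orientation \ref{cons5}(i), because an $M_2$-vertex may meet $S_{3,3}$ only in $X'_2$.
\end{itemize}

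Finally, your ``main obstacle'' does not exist. Taking maxima in Claims \ref{cla4} and \ref{cla5} gives $\partial(x,u)\le g^*+1$ and $\partial(v,x)\le 6$ on all of $S_{3,3}$. The plain sums already meet the table for $M_2$ ($1+5$), for $M_{32}$, and for $M'_2$. For $M'_2$ use $\partial(v,M)\le 7$ in the case $M'_{21}$, and $\partial(M'_{21},u)\le g^*+2$ in the case $M'_{22}$. No re-selection of neighbours is needed. It would not be possible anyway for $M'_{22}\cup M'_{23}$, whose vertices have a single $S_{3,3}$-edge and no $M$-neighbour.
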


\begin{proof} For $w\in L- L_4$, we have:
	$N(w)\cap K\neq\emptyset$ if $w\in L_1$,
	$N(w)\cap (L_1\cup M_1)\neq\emptyset$ if $w\in L_2$,
	and $N(w)\cap J_1\neq\emptyset,\,N(w)\cap L'\neq\emptyset$ if $w\in L_3$. Then $v\rightarrow (B-B_{10})\rightarrow J\rightarrow L_1\cup L_2$, $L_2\rightarrow L_1\cup M_1\rightarrow K\rightarrow (I-I_6)\rightarrow (A-A_{9})\rightarrow u$, and $v\rightarrow (B-B_{10})\rightarrow J\rightarrow L'\cap N(L_3)\rightarrow L_3\rightarrow J_1\rightarrow I_1\rightarrow (A-A_9)\rightarrow u$ by Orientation \ref{cons1}. Thus $\partial(v,w)\leq 3$, $\partial(w,u)\leq 4$ if $w\in L_1$, $\partial(v,w)\leq 3$, $\partial(w,u)\leq 5$ if $w\in L_2$, and $\partial(v,w)\leq 4,\partial(w,u)\leq 4$ if $w\in L_3$.

For $w\in L_4$, we have $v\rightarrow (B-B_{10})\rightarrow J\rightarrow L_4$ by Orientation \ref{cons1}, which implies $\partial(v,w)\leq 3$. By Observation \ref{obs1}, (ii) of Proposition \ref{pro1} and Orientation \ref{cons1}, we have $N(w)\cap (X\cup M_2)\neq\emptyset$ and $w\rightarrow X\cup M_2$. If $N(w)\cap X\neq\emptyset$, then $\partial(w,u)\leq \partial(w,X)+\partial(X,u)\leq 1+5=6$ by Claim \ref{cla4}. If $N(w)\cap M_2\neq\emptyset$, then $N(w')\cap K=\emptyset$ for any $w'\in M_2$, and thus $N(w')\cap S_{3,3}\neq\emptyset$ by Proposition \ref{pro1}, which implies $N(w')\cap (X\cup X'_1\cup X'_2)\neq\emptyset$.  By Orientation \ref{cons5} and \ref{cons6}, we have $w\rightarrow M_2\rightarrow X\cup X'_1\cup X'_2$, which implies $\partial(w,u)\leq \partial(w,w')+\partial(w',u)\leq 1+6=7$ by Claims \ref{cla4} and \ref{cla5}.

For $w\in M_1$, we have $N(w)\cap K\neq\emptyset$. If $N(w)\cap S_{3,3}=\emptyset$, then $N(w)\cap L\neq\emptyset$ by Proposition \ref{pro1}, and thus $N(w)\cap (L_1\cup L_2)\neq\emptyset$ by the definitions of $L_1$ and $L_2$. If $N(w)\cap S_{3,3}\neq\emptyset$, then $N(w)\cap (X\cup X'_1)\neq\emptyset$. Hence $ L_1\cup L_2\cup X\cup X'_1\rightarrow w\rightarrow K\rightarrow (I-I_6)\rightarrow (A-A_{9})\rightarrow u$ by Orientations \ref{cons1}, \ref{cons5} and \ref{cons6}, which implies $\partial(w,u)\leq 4$ and $\partial(v,w)\leq 4$ by the result of $w\in L_1\cup L_2$, Claims \ref{cla4} and \ref{cla5}.

For $w\in M_2$, we have $N(w)\cap (X\cup X'_1\cup X'_2)\neq\emptyset$ by the proof of the case $w\in L_4$. By Orientation \ref{cons1}, \ref{cons5} and \ref{cons6}, we have $L\rightarrow M_2\rightarrow X\cup X'_1\cup X'_2$. Then $\partial(w,u)\leq 6$ and $\partial(v,w)\leq 5$ by Claims \ref{cla4}, \ref{cla5} and the result of $w\in L$.

For $w\in M_{31}\cup M'_1$, Orientation \ref{cons5} yields a directed path (or cycle) $x_1x_2wx_3x_4x_5$ (only if $g^*=5$), $x_1x_2x_3wx_4x_5$ (only if $g^*=5$), or $y_1y_2wy_3y_4$ with $x_1,y_1\in S_{2,2}\cup J_1\cup J_2$, $x_4,y_3\in S_{3,3}$, $x_5,y_4\in S_{2,2}\cup I_1\cup I_2$, and thus Orientation \ref{cons1} gives the desired result. 

For $w\in M_{32}$, by Orientation \ref{cons6}, there exist $w_1,w_2\in S_{3,3}$ such that $w_1\rightarrow w\rightarrow w_2$. By Claims \ref{cla4} and \ref{cla5}, we have $\partial(x,u)\leq g^*+1$ and $\partial(v,x)\leq 6$ for any $x\in S_{3,3}$. Then the conclusion holds.

For $w\in M'_2$, we have $|[w,S_{3,3}]|=1$ and $N(w)\cap (K\cup L)=\emptyset$ by (i) of Proposition \ref{pro1}. If  $w\in M'_{21}$, then $N(w)\cap M\neq\emptyset$. This gives $M\rightarrow M'_{21}\rightarrow S_{3,3}$ under Orientation \ref{cons6}. From Claims \ref{cla4}, \ref{cla5} and $M=M_1\cup M_2\cup M_{31}\cup M_{32}$, we get $\partial(w,u)\leq g^*+2$ and $\partial(v,w)\leq 8$. If $w\in M'_{22}$, then $N(w)\cap (M'_1\cup M'_{21})\neq\emptyset$ since $G$ is bridgeless, and thus $\partial(w,u)\leq g^*+3$ and $\partial(v,w)\leq 7$ by Claims \ref{cla4}, \ref{cla5} and Orientation \ref{cons6}. If $w\in M'_{23}$, then $\partial(w,u)\leq g^*+3$ and $\partial(v,w)\leq 8$ by Claims \ref{cla4}, \ref{cla5} and Lemma \ref{l1}.
\end{proof}

\begin{claim}\label{cla7}
Table \ref{Table3} presents upper bounds on directed distances for $K\cup L\cup I\cup J\cup A\cup B$.

\begin{table}[htbp]
\centering
\setlength{\tabcolsep}{1pt}
\renewcommand{\arraystretch}{1.1}
$\begin{array}{|*{19}{c|}}  
	\hline
	\text{for $w$ in}
	&K_1&K_2&K_3&I_2&I_4&I_5&J_2&J_4&J_5&A_5&A_7&A_8&B_5&B_7&B_8\\
	\hline
	\partial(w,u)\le
	&3&3&3&2&2&2&4&5&8&1&1&1&5&6&9\\
	\hline
	\partial(v,w)\le
	&4&5&5&4&5&6&2&2&2&5&6&7&1&1&1\\
	\hline
\end{array}
$\vspace{-5pt}
\caption{Upper bounds on directed distances for some $w\in K\cup L\cup I\cup J\cup A\cup B$.}
\label{Table3}
\vspace{-12pt}
\end{table}
\end{claim}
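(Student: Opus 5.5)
The plan is to verify each entry of Table \ref{Table3} by a one-step "neighbour" argument. For each class we use the defining neighbourhood condition together with Orientation \ref{cons1} (and Orientations \ref{cons5}--\ref{cons6} where $S_{3,3}$ or $M$ enters). This places $w$ on a directed walk whose other vertices already have known bounds from Claims \ref{cla1}, \ref{cla4}, \ref{cla5}, \ref{cla6}. We then apply $\partial(v,w)\le \partial(v,x)+\partial(x,w)$ and $\partial(w,u)\le\partial(w,y)+\partial(y,u)$. The order matters: first $K$, then $I_2,I_4,I_5$, then $J_2,J_4,J_5$, and finally $A_5,A_7,A_8,B_5,B_7,B_8$. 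Each later class is defined through adjacency to an earlier one.

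\emph{The set $K$.} Every $w\in K$ satisfies $w\to I$ by $K\to I$. Since $K\cap K'=\emptyset$, $w$ has a neighbour outside $S_{3,4}\cup I$, i.e.\ in $S_{3,3}\cup L\cup M$. By the definition of $K_1$ and Observation \ref{obs1}, $K_3$ vertices meet $M_1$, and $K_2$ vertices meet $K_1$ with $K_1\to K_2$. The key point for $\partial(w,u)\le 3$ is that $w$ has an out-neighbour in $I-I_6$, which feeds $(A-A_9)\to u$. To get it, we use $d(w,u)=3$: $w$ has a neighbour in $S_{2,3}\cup S_{2,2}\cup S_{2,1}$, and only $I$ is possible. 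That neighbour lies in $I-I_6$ (indeed in $I_4$ or $I_5$ by definition) and hence points to $A-A_9$. For $\partial(v,w)$ we route through in-neighbours. For $K_1$ this is $L\to K$ ($\partial(v,L_1)\le 3$ etc.) or $S_{3,3}\to K$ with the $S_{3,3}$ bound $\partial(v,\cdot)\le 3$ from Claims \ref{cla4}(i),(ii) and \ref{cla5}(i) for the relevant subclasses. The main case check here is that the $S_{3,3}$-neighbour is one with $\partial(v,\cdot)\le 3$; I would try using that $X$-vertices adjacent to $K$ fall under Claim \ref{cla4}(i)/(ii) and $X'_1\to M_1$ is not needed. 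For $K_2$ we add one via $K_1\to K_2$, and for $K_3$ we use $M_1\to K$ with $\partial(v,M_1)\le 4$.

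\emph{The sets $I$ and $J$.} For $I_2,I_4,I_5$, the edges to $A$ go $I\to A-A_9$, so $\partial(w,u)\le 2$ once $w$ has a neighbour in $A$. This holds since $d(w,u)=2$ forces a neighbour in $S_{1,2}$, which must lie in $A$ because $A'$ vertices have no neighbours in $S_{2,3}$. For $\partial(v,w)$ we use $S_{3,3}\to I$ with $\partial(v,S_{3,3}\text{-nbr})\le 3$, then $I_2\to I_4$ or $K_1\to I$ (giving $5$), then $K_2\cup K_3\to I$ (giving $6$). $J$ is symmetric with roles of $u,v$ swapped: $\partial(v,w)\le 2$ via $B-B_{10}\to J$. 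For $\partial(w,u)$ we use $J\to S_{3,3}$ (one of $J\to X$ with bound $3$, giving $4$), then $J_4\to J_2$ or $J\to L_1$ (giving $5$), then $J\to L_2\cup L_3\cup L_4$ with $L_4$'s bound $7$ (giving $8$).

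\emph{The sets $A$ and $B$.} For $A_{3+j}$ we have $\partial(w,u)=1$ and $I_j\to w$, so $\partial(v,w)\le\partial(v,I_j)+1$, giving $5,6,7$ for $j=2,4,5$. For $B_{3+j}$ we have $v\to w$ and $w\to J_j$, so $\partial(w,u)\le 1+\partial(J_j,u)$, giving $5,6,9$. The main obstacle is the $S_{3,3}$-routing for $K_1$, $I_2$, $J_2$: the orientation of $[S_{3,3},I\cup J\cup K]$ depends on Orientations \ref{cons4}--\ref{cons5}. We must ensure the right neighbour class, for instance that a vertex of $S_{3,3}$ adjacent to $J$ is in $X$ with the better bound, and that $X'$ has no such neighbours by Proposition \ref{pro1}(ii).
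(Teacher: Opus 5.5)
Your proposal is correct and is essentially the paper's own proof. It uses the same one-step neighbour routing through Orientation~\ref{cons1} and Claims~\ref{cla4} and~\ref{cla6}, in the same order, and it makes explicit two things the paper leaves implicit: that the $S_{3,3}$-neighbour of a vertex of $K_1\cup I_2\cup J_2$ lies in $X$ under case (i) or (ii) of Claim~\ref{cla4}, and why every $w\in K$ has an out-neighbour in $I-I_6$. There are two harmless slips: the $I$-neighbour of $w\in K$ is only guaranteed to lie in $\cup_{i=1}^5 I_i$, not in $I_4\cup I_5$, and the relevant orientation is $(J-J_1)\to L_3$ rather than $J\to L_3$, which still covers $J_5$ since $J_5\cap J_1=\emptyset$.
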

\begin{proof}
By Orientation \ref{cons1}, we have $\partial(w,u)\leq 3$ if $w\in K$, $\partial(w,u)\leq 2$ if $w\in I_2\cup I_4\cup I_5$, $ \partial(v,w)\leq 2$ if $w\in J_2\cup J_4\cup J_5$, $\partial(w,u)\leq 1$ if $w\in A_5\cup A_7\cup A_8$, and $\partial(v,w)\leq 1$ if $w\in  B_5\cup B_7\cup B_8$.

For $w\in K_1$, we have $N(w)\cap (S_{3,3}\cup L)\neq\emptyset	$. If $N(w)\cap L\neq\emptyset$, then $N(w)\cap L_1\neq\emptyset$, which implies $\partial(v,w)\leq \partial(v,L_1)+\partial(L_1,w)\leq 4$ by Claim \ref{cla6} and Orientation \ref{cons1}. If $N(w)\cap S_{3,3}\neq\emptyset$, then $N(w)\cap X\neq\emptyset$ by Proposition \ref{pro1}, and thus $\partial(v,w)\leq \partial(v,X)+\partial(X,w)\leq 4$ by Claim \ref{cla4} and Orientation \ref{cons1}. 

For $w\in K_2$, we have $N(w)\cap K_1\neq\emptyset$ and $K_1\rightarrow K_2$ by Orientation \ref{cons1}, and thus $\partial(v,w)\leq 5$ by the result of $w\in K_1$. 

For $w\in K_3$, we have $N(w)\cap M_1\neq\emptyset$ by Observation \ref{obs1}, and thus $\partial(v,w)\leq 5$ by Claim \ref{cla6} and Orientation \ref{cons1}. 

For $w\in I_2\cup J_2$, then $N(w)\cap X\neq\emptyset$ by Proposition \ref{pro1}, and thus $\partial(v,w)\leq 4$ if $w\in I_2$ and $\partial(w,u)\leq 4$ if $w\in J_2$ by Claim \ref{cla4} and Orientation \ref{cons1}. 

For $w\in I_4\cup I_5$, we have $N(w)\cap (I_2\cup K_1)\neq\emptyset$ if $w\in I_4$, and $N(w)\cap (K_2\cup K_3)\neq\emptyset$ if $w\in I_5$. Then $\partial(v,w)\leq 5$ if $w\in I_4$, and $\partial(v,w)\leq 6$ if $w\in I_5$ by Orientation \ref{cons1}. 

For $w\in J_4\cup J_5$, we have $N(w)\cap (J_2\cup L_1)\neq\emptyset$ if $w\in J_4$, and $N(w)\cap (L_2\cup L_3\cup L_4)\neq\emptyset$ if $w\in J_5$. Then $\partial(w,u)\leq 5$ if $w\in J_4$, and $\partial(w,u)\leq 8$ if $w\in J_5$ by Orientation \ref{cons1} and Claim \ref{cla6}.

For $w\in A_5\cup B_5$, we have $N(w)\cap I_2\neq\emptyset$ if $w\in A_5$, and $N(w)\cap J_2\neq\emptyset$ if $w\in B_5$, then $\partial(v,w)\leq 5$ if $w\in A_5$, and $\partial(w,u)\leq 5$ if $w\in B_5$ by Orientation \ref{cons1}. Similarly, the results hold for $w\in A_7\cup A_8\cup B_7\cup B_8$.
\end{proof}

\vspace{10pt}

To obtain the directed distances for $w\in B_{10}$, we define a partition of $B_{10}$ as follows.
\begin{align*}
B_{10}^{(1)}&=\{w\mid w\in B_{10}\text{ and }w \text{ is isolated in } G[B_{10}]\},\\ B_{10}^{(2)}&=\{w\mid w\in B_{10}\text{ and }w \text{ is not isolated in } G[B_{10}]\};
\end{align*}

\begin{Orientation}\label{cons2}
{\rm (i)}	Let $R=\{v\}$ and $S=B_{10}^{(2)}$. Orient the edges of $G[S]$ and $[R,S]$ by an $R$-$S$ orientation. \\ {\rm (ii)} Let $w\in B_{10}^{(1)}$ be an undirected vertex. Using the following iterative procedure, we assign orientations to some edges of $G$. If $w$ is not an isolated vertex in $G[B\cup B']$, then let $v\rightarrow N(w)\cap (B\cup B'-B_{10})\rightarrow w\rightarrow  v$.
If $w$ is isolated in $G[B\cup B']$, then $vw$ lies in a $2$-cycle, $4$-cycle or $5$-cycle since $g^*\in \{4,5\}$. From these cycles, select the shortest cycle and denote it as $C_1$. If $C_1$ is a $2$-cycle, then orient $C_1$ by $B$-$C$ orientation since $w$ is undirected. If $C_1$ is an $r$-cycle, where $r=4$ or $5$ $($and $r=5$ holds only if $g^*=5)$, then $C_1$ is given by either $vww_2w_3v$ or $vww_1w_2w_3v$. If $w_2\in B$, then $w$ is in a cycle of length $r-1$ since $w_2v\in E(G)$, a contradiction. By the definition of $B_{10}^{(1)}$, $w_1\in J'$, $w_2\in J'\cup J$ $(w_2\in J'$ if $r=4)$, $w_3\in B$. If $r=5$ and $w_1w_2$ is oriented, then $w_1w_2$ is in a directed $5$-cycle, and thus $vw$ is in a $4$-cycle, a contradiction. So, $w_1w_2$ is undirected when $r=5$. Since $w$ is undirected, $vww_2$ is undirected if $r=4$, and $vww_1w_2$ is undirected if $r=5$. 

If $C_1$ is mixed, then orient $C_1$ by a $B$-$C$ orientation. Otherwise, $w_2w_3$ is oriented, and $w_2\in J'$ by Orientation \ref{cons1}. Thus, there exists a directed cycle containing $v,w_2,w_3$, and hence there exists a mixed cycle $C_2$ derived from $C_1$ such that $vww_1\subseteq C_2$ and $|C_1|=|C_2|$. We then orient the edges of $C_2$ by a $B$-$C$ orientation. 
\end{Orientation}

\begin{claim}\label{cla7_1}
For $w\in B_{10}^{(1)}$, we have $\partial(w,u)\leq 2g^*-2$ and $\partial(v,w)\leq g^*-1$. For $w\in B_{10}^{(2)}$, we have $\partial(w,u)\leq g^*+1$ and $\partial(v,w)\leq 2$.
\end{claim}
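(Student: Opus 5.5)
The plan is to treat the two parts of $B_{10}$ separately and read off each bound from Orientation \ref{cons2}, together with the bound $\partial(v,u)=g^*-1$ from Claim \ref{cla1}.

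\textbf{The part $B_{10}^{(2)}$.} Apply Lemma \ref{l1} to the $R$-$S$ orientation with $R=\{v\}$ and $S=B_{10}^{(2)}$. The hypotheses of Definition \ref{def2} hold: $G[S]$ has no isolated vertex by definition, and every $w\in S$ is adjacent to $v$. Lemma \ref{l1} then gives $\partial(v,w)\le 2$ and $\partial(w,v)\le 2$. Hence
\[
\partial(w,u)\le \partial(w,v)+\partial(v,u)\le 2+(g^*-1)=g^*+1 .
\]

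\textbf{The part $B_{10}^{(1)}$.} Take an undirected $w\in B_{10}^{(1)}$, since that is when Orientation \ref{cons2}(ii) acts.
\begin{itemize}
\item If $w$ has a neighbour $x\in B\cup B'-B_{10}$, the orientation is $v\to x\to w\to v$. This gives $\partial(v,w)\le 2\le g^*-1$ and $\partial(w,u)\le 1+(g^*-1)=g^*$.
\item Otherwise $w$ is isolated in $G[B\cup B']$, and the procedure makes $w$ lie on a bidirectional cycle through $v$ of length $r\in\{2,4,5\}$, namely $C_1$ or $C_2$. Here $r\le g^*$, and $r=5$ only when $g^*=5$. Going around that directed cycle gives $\partial(v,w)\le r-1\le g^*-1$ and $\partial(w,v)\le r-1\le g^*-1$. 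Therefore $\partial(w,u)\le (g^*-1)+(g^*-1)=2g^*-2$.
\end{itemize}

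\textbf{Vertices already directed.} The remaining case is a vertex $w$ that was already directed when the iteration reached it. Such a $w$ was oriented as part of an earlier cycle $C_1$ or $C_2$ built for some other vertex $w'\in B_{10}^{(1)}$. That cycle passes through $v$ and $w$, so the same cycle bound $r-1\le g^*-1$ applies in both directions.

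One point needs checking. When $C_1$ is only mixed and is replaced by $C_2$, the $B$-$C$ orientation must be consistent with the edges already fixed, namely $w_2w_3$ with $w_3\in B$ and, from Orientation \ref{cons1}, $B\to$ (edges to $J$) and $v\to B$. The key fact is that the replacement keeps $|C_2|=|C_1|=r$ and still contains $v$ and $w$. Once that holds, both distances are at most $r-1$.

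\textbf{Main obstacle.} The hardest step is verifying that the iterative procedure really yields a \emph{directed} cycle of length at most $g^*$ through both $v$ and $w$ in every case. This covers cycles finished via $C_2$ and vertices $w$ swept into some other vertex's cycle. It needs a careful look at how the earlier orientations touch $J'$ and $B$, and at the exclusions argued in Orientation \ref{cons2} (e.g.\ $w_2\notin B$, and $w_1w_2$ undirected when $r=5$).
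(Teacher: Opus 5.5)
Your proposal is correct and matches the paper's proof. For $B_{10}^{(2)}$, Lemma~\ref{l1} with $R=\{v\}$ gives $\theta(w,v)\le 2$. For $B_{10}^{(1)}$, Orientation~\ref{cons2} yields either $v\to x\to w\to v$ or a directed cycle of length at most $g^*$ through $v$ and $w$. In both cases everything is then combined with $\partial(v,u)=g^*-1$ from Claim~\ref{cla1}. Your explicit treatment of vertices of $B_{10}^{(1)}$ already directed by another vertex's cycle fills in a case the paper leaves implicit. The existence of the directed cycle you flag as the main obstacle is part of the construction of Orientation~\ref{cons2}, not something this claim has to re-prove.
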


\begin{proof}
If $w\in B_{10}^{(2)}$, by (i) of Orientation \ref{cons2} and Lemma \ref{l1}, we obtain $\theta(w,v)\leq 2$. Hence $\partial(w,u)\leq \partial(w,v)+\partial(v,u)\leq g^*+1$ and $\partial(v,w)\leq 2$ since $\partial(v,u)=g^*-1$ from Claim \ref{cla1}. By Orientation \ref{cons2}, if $w\in B_{10}^{(1)}$ and $w$ is not an isolated vertex in $G[B\cup B']$, then $\theta(v,w)\leq 2$; if $w\in B_{10}^{(1)}$ and $w$ is isolated in $G[B\cup B']$, then there exists a directed cycle of length at most $g^*$ containing $vw$, and thus $\theta(v,w)\leq g^*-1$. Since $\partial(v,u)= g^*-1$, we obtain the result as expected.
\end{proof}

The directed graph obtained from Orientation \ref{cons2} is denoted by $\rbjt{D_2}$. We define a partition for $B'$ as follows.
\begin{align*}
B'_1&=B'\cap V(\rbjt{D_2}),\  B'_2=\{w\mid w\in B'-B'_1\text{ and } w \text{ is isolated in } G[B'-B'_1]\},\\ B'_3&=\{w\mid w\in B'-B'_1\text{ and } w \text{ is not isolated in } G[B'-B'_1]\}.
\end{align*}

\begin{Orientation}\label{cons3}
Let $R=\{v\}$ and $S=B'_3$. Orient the edges of $G[S]$ and $[R,S]$ by an $R$-$S$ orientation. Let $w\in B'_2$ be an undirected vertex. If $w$ is not an isolated vertex in $G[B']$, then $N(w)\cap B'_1\neq\emptyset$, and thus let $B'_1\rightarrow w\rightarrow v$. Otherwise, $w$ is an isolated vertex in $G[B']$. If $|[w,v]|\geq 2$, then orient the edges in $[w,v]$ in two ways. If $|[w,v]|=1$, then $N(w)\cap B\neq \emptyset$ since $G$ is bridgeless. For some $w'\in N(w)\cap B$, $vww'v$ is a mixed 3-cycle since $w$ is undirected, and we then orient $vww'v$ by a $B$-$C$ orientation.
\end{Orientation}

\begin{claim}
For $w\in B'$, we have $\partial(w,u)\leq g^*+1$ and $\partial(v,w)\leq 2$.
\end{claim}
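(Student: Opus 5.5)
The plan is to show $\theta(w,v)\le 2$ for every $w\in B'$, i.e. $\partial(v,w)\le 2$ and $\partial(w,v)\le 2$. Then Claim \ref{cla1} (i), which gives $\partial(v,u)=g^*-1$, yields $\partial(w,u)\le \partial(w,v)+\partial(v,u)\le g^*+1$. I would split according to the partition $B'=B'_1\cup B'_2\cup B'_3$.

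\textbf{Case $w\in B'_3$.} Here $G[B'_3]$ has no isolated vertex. Every $x\in B'_3\subseteq S_{2,1}$ is adjacent to $v$, so $N(x)\cap R\neq\emptyset$ for $R=\{v\}$. Orientation \ref{cons3} is therefore a genuine $R$-$S$ orientation, and Lemma \ref{l1} gives $\theta(w,v)\le 2$ directly.

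\textbf{Case $w\in B'_1$.} Then $w$ is already oriented in $\rbjt{D_2}$, so $w$ lies on one of the bidirectional cycles created in (ii) of Orientation \ref{cons2}. Such a cycle $C$ passes through $v$ and has length at most $g^*$, but I need the finer information of which edges of $C$ are incident to $w$. The vertices $w_1,w_2$ of $C_1$ or $C_2$ lie in $J\cup J'$, so a vertex of $B'$ can only play the role of $w_3$, the neighbour of $v$ on $C$ (or, for the modified cycle $C_2$, a vertex of $B\cup B'$ adjacent to $v$). Hence $w$ is adjacent to $v$ on a directed cycle through $v$, so one of $\partial(v,w),\partial(w,v)$ equals $1$. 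The other direction is at most $|C|-1\le g^*-1$, which is too weak.

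To get $2$, I would exploit the choice of $C$ as a shortest cycle together with the definition of $B'$. Since $w\in B'$, we have $N(w)\subseteq S_{2,1}\cup\{v\}$, so the predecessor or successor of $w$ on $C$ other than $v$ is in $B\cup B'$ and hence adjacent to $v$. That would create a shorter cycle through $vw$, contradicting minimality of $C_1$, unless $|C|\le 3$. The same shortcut appears in the proof that $w_2\notin B$ inside Orientation \ref{cons2}. So I expect to argue that $B'$-vertices can occur on these cycles only in positions adjacent to $v$, with their other cycle-neighbour in $J\cup J'$. That is impossible for $w\in B'$, which would force $w$ into the 2-cycle case or into the non-isolated case $v\to N(w)\cap(B\cup B'-B_{10})\to w'\to v$. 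The latter gives $\theta\le 2$ directly.

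\textbf{Case $w\in B'_2$.} I would follow the three subcases of Orientation \ref{cons3}.
\begin{itemize}
\item If $w$ is not isolated in $G[B']$, we have $B'_1\to w\to v$, and $\partial(v,w)\le \partial(v,B'_1)+1$. This requires $\partial(v,B'_1)\le 1$, which the previous case should supply, or else $\le 2$ via a neighbour reached in one step.
\item If $|[w,v]|\ge 2$, the edges are oriented both ways, giving $\theta=1$.
\item Otherwise the mixed triangle $vww'v$ receives a $B$-$C$ orientation, giving $\theta(w,v)\le 2$.
\end{itemize}

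\textbf{Main obstacle.} The delicate step is the $B'_1$ case, together with the bound $\partial(v,B'_1)$ that the first $B'_2$ subcase relies on. It requires tracking exactly how the cycles in Orientation \ref{cons2} meet $B'$. I would first try to show that every $B'_1$-vertex lies at distance one from $v$ in both directions, using the shortest-cycle minimality. If that fails, I would settle for $\partial(v,w)\le 2$ and $\partial(w,v)\le g^*-1$ and check whether that still gives $\partial(w,u)\le g^*+1$, since only $\partial(w,u)$ and $\partial(v,w)$ are needed.
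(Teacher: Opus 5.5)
Your proposal is correct and follows the paper's proof: show $\theta(v,w)\le 2$ on each of $B'_1$, $B'_2$, $B'_3$ using Orientation~\ref{cons2}, Orientation~\ref{cons3} and Lemma~\ref{l1}, then add $\partial(v,u)=g^*-1$ from Claim~\ref{cla1}. The paper simply asserts the fact your $B'_1$ analysis derives: no $B'$-vertex lies on the cycles $C_1,C_2$, because its cycle-neighbour other than $v$ would lie in $J\cup J'$. Hence $B'_1$ arises only from the non-isolated case, giving $v\to B'_1\to N(B'_1)\cap B_{10}^{(1)}\to v$ and so $\partial(v,B'_1)=1$ for your $B'_2$ subcase; your fallback, which would only give $\partial(w,u)\le 2g^*-2$, is therefore unnecessary.
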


\begin{proof}
By Orientation \ref{cons2}, we have $v\rightarrow B'_1\rightarrow N(B'_1)\cap B_{10}^{(1)}\rightarrow v$. Then by Orientation \ref{cons3} and Lemma \ref{l1}, we have $\theta(v,w)\leq 2$ for any $w\in B'$. Hence the conclusion holds by $\partial(v,u)=g^*-1$ in Claim \ref{cla1}.
\end{proof}

By the symmetry of $u$ and $v$, we suppose that $|K'|\geq |L'|$, and then $K'\neq\emptyset$ if $L'\neq\emptyset$. To orient the edges incident to $L'$, we use the following notations.

For \(w \in L'\), let \(P_w = w i_1 \cdots i_\ell\) denote a shortest \((w, K')\)-path, where $\ell\in \{3,4\}$ and \(i_\ell \in K'\). If \(\ell = 3\), then \(i_1 \in J_1\cup L_1\) and $i_2\in I_1\cup K_1$. If $\ell=4$, then $i_1\in L'\cup L\cup (\cup_{i=1}^4 J_i)$, $i_2\in K_1\cup X\cup M_1\cup I_1\cup J_1\cup L_1\cup S_{2,2}$ and $i_3\in K\cup (\cup_{i=1}^4 I_i)$.
\begin{align*}
L'_1&=L'\cap N(L_1\cup L_2\cup L_3),\\
L'_2&=\{w\mid w\in L'-L'_1,\  \exists P_w=wi_1i_2i_3i_4 \text{ with }i_1\in L_4\cup L'_1,i_2\in X\cup L_1\cup J_1,\\  &\quad\  i_3\in I_1\cup I_2\cup K_1,i_4\in K'\},\\
L'_3&=\{w\mid w\in L'-L'_1-L'_2,\  w \text{ is isolated in } G[L'-L'_1-L'_2]\},\\
L'_4&=\{w\mid w\in L'-\cup_{i=1}^3L'_i,\ d(w,K')=3\},\  L'_5=\{w\mid w\in L'-\cup_{i=1}^3L'_i,\ d(w,K')=4\}.
\end{align*}

We further partition $L'_4,L'_5$ as follows.
\begin{align*}
L'_{11}&=L'_1\cap N(L_1\cup L_3),\ L'_{12}=L'_1\cap N(J_1)-L'_{11},\ L'_{13}=L'_1-L'_{11}-L'_{12};\\		
L'_{41}&=L'_4\cap N(L'_5),\ 
L'_{42}=\{w\mid w\in L'_{4}-L'_{41},\ w \text{ is isolated in } G[L'_{4}-L'_{41}]\},\\
L'_{43}&=\{w\mid w\in L'_{4}-L'_{41},\ w \text{ is not isolated in } G[L'_{4}-L'_{41}]\};\\
L'_{51}&=L'_5\cap N(L'_4),\\
L'_{52}&=\{w\in L'_{5}-L'_{51}\mid w \text{ is isolated in } G[L'_{5}-L'_{51}] \text{ or }N(w)\cap J_6\neq\emptyset\},\\
L'_{53}&=\{w\mid w\in L'_5-L'_{51}-L'_{52},\ w \text{ is isolated in } G[L'_{5}-L'_{51}-L'_{52}] \},\\
L'_{54}&=\{w\mid w\in L'_{5}-L'_{51}-L'_{52},\ w \text{ is not isolated in } G[L'_{5}-L'_{51}-L'_{52}]\}.
\end{align*}

For any $w\in L'_2$, we have $d(w,K')=4$. It follows that $L'\cap N(L_3)\subseteq L'_{11}$, so Orientation \ref{cons1} does not conflict with Orientation \ref{cons8_2}. We then obtain the following orientation.

\begin{Orientation}\label{cons8_2}
{\rm (i)} Let $J_5\cup J_6\rightarrow L'$, $J\rightarrow L'_{11}\cup L'_{13}\cup L'_2\rightarrow L$, $L'_2\rightarrow L'_{11}\cup L'_{12}$, $L_2\rightarrow L'_{12}\rightarrow J_1$,  $J\rightarrow L'_{51}\cup L'_{42}\rightarrow L'_{41}\rightarrow J_1$, $L'_{51}\rightarrow L'_{52}\rightarrow L'_{53}\rightarrow \cup_{i=1}^4 J_i$, $L'_{52}\rightarrow (\cup_{i=1}^4 J_i)\cup L'_{11}\cup L'_{12}$. 

{\noindent\rm (ii)} Let $w\in L'_3$. Then $P_w=wi_1i_2\cdots i_\ell$ with $i_1\in J_1$ if $\ell=3$, and $i_1\in \cup_{i=1}^4 J_i$ if $\ell =4$ by the definition of $L'_1$ and $L'_2$. If $w$ is not isolated in $G[L']$, then $N(w)\cap (L'_1\cup L'_2)\neq\emptyset$. Thus let $ L'_{11}\cup L'_{13}\cup L'_2\rightarrow w\rightarrow \cup_{i=1}^4 J_i$ when $N(w)\cap L'_{12}=\emptyset$, and $\cup_{i=1}^4 J_i\rightarrow w\rightarrow L'_{12}$ when $N(w)\cap L'_{12}\neq\emptyset$. Assume now $w$ is isolated in $G[L']$. When $N(w)\cap L\neq\emptyset$, we have $N(w)\cap L_4\neq\emptyset$, and let $L_4\rightarrow w\rightarrow i_1$. When $N(w)\cap L=\emptyset$, we have $|[w,J]|\geq2$ since $G$ is bridgeless. Orient $wi_1$ as $\rbjt{wi_1}$, and the other edges of $[w,J]$ toward $w$.

{\noindent\rm (iii)} Let $R=J_1,S=L'_{43}$, or $R=\cup_{i=1}^4 J_i,S=L'_{54}$. Then $N(w)\cap R\neq\emptyset$ for $w\in S$ by checking every shortest $(w,K')$-path. Thus we give an $R$-$S$ orientation to the edges of $G[S]$ and $[R,S]$.
\end{Orientation}

\begin{claim}\label{cla9_2}
For $w\in L'$, we have $\partial(v,w)\leq 4$, and $\partial(w,u)\leq 7$ with equality only if $w\in L'_{54}$.
\end{claim}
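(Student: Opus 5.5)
The plan is to go through the parts $L'_{11},L'_{12},L'_{13},L'_2,L'_3,L'_{41},L'_{42},L'_{43},L'_{51},\dots,L'_{54}$ in this order. For each part, we exhibit one directed path from $v$ into the part, built from $v\rightarrow (B-B_{10})\rightarrow J$ of length $2$ given by Orientation \ref{cons1}. We also exhibit one directed path out of the part, ending at a set whose distance to $u$ is already bounded: $J_1$ (with $\partial(J_1,u)\le 3$), $\cup_{i=1}^4 J_i$ (with $\partial\le 5$ by Tables \ref{Table1} and \ref{Table3}), or $L$ (Claim \ref{cla6}). Since every $w\in L'$ lies in $S_{4,3}$ and $N(w)\subseteq S_{4,3}\cup J$, the only possible exits are into $J$, $L$, or other vertices of $L'$. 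The bound $\partial(v,w)\le 4$ always comes from a path $v\to B\to J\to(\text{at most one intermediate vertex})\to w$.

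\textbf{The easy parts.}
\begin{itemize}
\item $w\in L'_{11}\cup L'_{13}\cup L'_2$: we have $J\rightarrow w$, so $\partial(v,w)\le 3$. For the exit, we use $w\rightarrow L$ with $N(w)\cap(L_1\cup L_2\cup L_3)\neq\emptyset$ for $L'_1$, so $\partial(w,u)\le 1+5$. For $w\in L'_2$, we use $w\rightarrow L_4$ or $w\rightarrow L'_{11}\cup L'_{12}$ (from $i_1$), so $\partial(w,u)\le 1+\max\{7,\partial(L'_1,u)\}$.
\item $w\in L'_2$ with exit into $L_4$: the value $1+7$ is too big. Instead we should exploit $i_1\in L'_1$, or $i_1\in L_4$ with $i_2\in X\cup L_1\cup J_1$. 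I expect to need a finer route for $L'_2$ through $i_1$ to obtain $\le 6$.
\item $w\in L'_{12}$: we have $L_2\rightarrow w\rightarrow J_1$, giving $\partial(v,w)\le 4$ and $\partial(w,u)\le 4$.
\item $w\in L'_{41}\cup L'_{42}\cup L'_{51}$: we use $J\rightarrow L'_{51}\cup L'_{42}\rightarrow L'_{41}\rightarrow J_1$. This gives $\partial(v,\cdot)\le 4$ and $\partial(\cdot,u)\le 5$.
\item $w\in L'_{52}\cup L'_{53}$: we use $L'_{51}\rightarrow L'_{52}\rightarrow L'_{53}\rightarrow\cup_{i=1}^4J_i$, together with $J_5\cup J_6\rightarrow L'$. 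This needs the fact that $L'_{52}$ vertices either have a $J_6$ neighbour or a neighbour in $L'_{51}$. Isolated vertices of $G[L'_5-L'_{51}]$ must have a second neighbour in $J$, by bridgelessness. The exit is $\partial\le 1+5=6$.
\item $w\in L'_3$: we read off Orientation \ref{cons8_2}(ii) case by case. When $w\rightarrow\cup_{i=1}^4J_i$ we get $\partial(w,u)\le 6$. The entry comes via $L'_{11}\cup L'_{13}\cup L'_2$ (distance $\le 3$), via $L_4$ (distance $3$), or via the other $J$-edges (distance $\le 3$). In the subcase $\cup J_i\rightarrow w\rightarrow L'_{12}$, we get $\partial(w,u)\le 1+4$.
\end{itemize}

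\textbf{The $R$-$S$ parts.} For $L'_{43}$ and $L'_{54}$ we apply Lemma \ref{l1}. This gives $\theta(w,R)\le 2$ with $R=J_1$ or $R=\cup_{i=1}^4J_i$. For $L'_{43}$ this yields $\partial(v,w)\le 2+2$ and $\partial(w,u)\le 2+3=5$. For $L'_{54}$ it yields $\partial(v,w)\le 2+2=4$ and $\partial(w,u)\le 2+\partial(J_4,u)=2+5=7$. Equality is possible only when the exit vertex lies in $J_4$, which is the only place the value $7$ arises.

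\textbf{Main obstacle.} The main obstacle is the strictness claim. We must verify that no other part reaches $7$, and in particular that every route out of $L'_2$ and $L'_3$ avoids $L_4$ (whose bound is $7$) or reaches $J_1$ or $J_2$ first. I would resolve this by using the structure of $P_w$: its second vertex lies in $X\cup L_1\cup J_1$, and Claims \ref{cla4} and \ref{cla6} bound the distance from there.
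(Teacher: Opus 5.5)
\textbf{Verdict: same approach as the paper, but with a genuine gap at $L'_2$ that breaks the ``equality only on $L'_{54}$'' part.}

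Your case-by-case walk through the partition of $L'$ is the same route the paper takes, and most of your cases are right. The problem is the case you yourself flag, $w\in L'_2$ whose path $P_w=wi_1i_2i_3i_4$ has $i_1\in L_4$, and your proposed fix does not close it. Routing through $i_1$ with $\partial(L_4,u)\le 7$ gives $8$. Routing through $i_2\in X$ with Claim 4 gives only $2+5=7$, because Claim 4(ii) allows $\partial(x,u)=5$ for $x\in X$. Either way strictness fails. Two ingredients are missing:
\begin{itemize}
\item You must show that $i_1\in L_4$ forces $i_2\in X$. If $i_2\in L_1$, then $i_1\in L_2$. If $i_2\in J_1$, then $i_1\in L\cap N(J_1)\cap N(L')$, so $i_1\in L_3$. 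The second exclusion matters: for $i_2\in J_1$ the edge is oriented $i_2\to i_1$ (since $J\to L_4$), and no route through $i_2$ would exist.
\item You must go one step past $X$. By $L'_2\to L$, $L\to S_{3,3}$ and $S_{3,3}\to K\cup I$ you get the directed path $w\to i_1\to i_2\to i_3$ with $i_3\in I_1\cup I_2\cup K_1$. Since $\partial(i_3,u)\le 3$ by Claims 1 and 7, this gives $\partial(w,u)\le 6$.
\end{itemize}
Your ``route through the second vertex'' idea also fails when $i_1\in L'_1$ and $i_2\in J_1$ with $i_1\in L'_{11}$, because then $J\to L'_{11}$ orients the edge as $i_2\to i_1$. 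There you must note that $i_1\in L'_{11}\cup L'_{12}$ (never $L'_{13}$, since $i_2\in L_1\cup J_1$). Then $w\to i_1$ together with $\partial(L'_{11},u)\le 5$ and $\partial(L'_{12},u)\le 4$ gives $\partial(w,u)\le 6$.

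\textbf{A second, smaller gap at $L'_{53}$.} The chain $J\to L'_{51}\to L'_{52}\to L'_{53}$ only yields $\partial(v,w)\le 5$ for $w\in L'_{53}$. You need to pick the $L'_{52}$-neighbour $w'$ of $w$ correctly. Such a $w'$ is adjacent to $w\in L'_5-L'_{51}$, so it is not isolated in $G[L'_5-L'_{51}]$. By the definition of $L'_{52}$ it therefore has a $J_6$-neighbour, and $J_6\to w'\to w$ gives $\partial(v,w)\le 4$.

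Your justification via bridgelessness is also not the right reason. An isolated vertex of $G[L'_5-L'_{51}]$ has an $L'_{51}$-neighbour because of the definition of $L'_3$. Such a vertex is not isolated in $G[L'-L'_1-L'_2]$. It cannot be adjacent to $L'_3$, since $L'_3$ is isolated there, and it cannot be adjacent to $L'_4$, since it is not in $L'_{51}$. Hence its neighbours in $L'-L'_1-L'_2$ lie in $L'_{51}$.
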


\begin{proof}
For $w\in L'_1\cup L'_2$, we have $N(w)\cap (L_1\cup L_3)\neq\emptyset$ if $w\in L'_{11}$, $N(w)\cap J_1\neq\emptyset$ if $w\in L'_{12}$,  and $N(w)\cap L_2\neq\emptyset$ if $w\in L'_{13}$. By Orientations \ref{cons1} and \ref{cons8_2}, we have $v\rightarrow (B-B_{10})\rightarrow J\rightarrow L'_{11}\cup L'_{13}\cup L'_2\rightarrow L$, $L'_2\rightarrow L'_{11}\cup L'_{12}$ and $L_2\rightarrow L'_{12}\rightarrow J_1$. Then $\partial(v,w)\leq 4$ with equality only if $w\in L'_{12}$ by Claim \ref{cla6}. If $w\in L'_{11}$, then $\partial(w,u)\leq \partial(w,L_1\cup L_3)+\partial(L_1\cup L_3,u)\leq 1+4=5$ by Claim \ref{cla6}. If $w\in L'_{12}$, then $\partial(w,u)\leq \partial(w,J_1)+\partial(J_1,u)\leq 4$. If $w\in L'_{13}$, then $\partial(w,u)\leq \partial(w,L_2)+\partial(L_2,u)\leq 6$. If $w\in L'_2$, there exists a shortest $(w,K')$-path $P_w=wi_1i_2i_3i_4$ with $i_1\in L_4\cup L'_1,i_2\in X\cup L_1\cup J_1,i_3\in I_1\cup I_2\cup K_1,i_4\in K'$. For $i_1\in L_4$, we have $i_2\in X,i_3\in I_1\cup I_2\cup K_1$ by the definition of $L_4$, and $w\rightarrow i_1\rightarrow i_2\rightarrow i_3$ by Orientation \ref{cons1}, and thus $\partial(w,u)\leq \partial(w,i_3)+\partial(i_3,u)\leq 3+3=6$ by Claims \ref{cla1} and \ref{cla7}. For $i_1\in L'_1$, we have $i_1\in L'_{11}\cup L'_{12}$ since $i_2\in L_1\cup J_1$, and thus $\partial(w,u)\leq \partial(w,i_1)+\partial(i_1,u)\leq 6$.

For $w\in L'_3$, $P_w=wi_1i_2\cdots i_\ell$ with $i_1\in J_1$ if $\ell=3$, and $i_1\in \cup_{i=1}^4 J_i$ if $\ell =4$. By Claims \ref{cla1}, \ref{cla7} and Orientation \ref{cons8_2}, if $w$ is not isolated in $G[L']$, then we have $\partial(v,w)\leq 4$ and $\partial(w,u)\leq \partial(w,w')+\partial(w',u)\leq 1+5=6$ for $w'\in N(w)\cap ((\cup_{i=1}^4 J_i)\cup L'_{12})$.  If $w$ is isolated in $G[L']$, then by Orientation \ref{cons8_2}, we have $w'\rightarrow w\rightarrow i_1$ for some $w'\in L_4\cup J$, which implies $\partial(v,w)\leq 3$ and $\partial(w,u)\leq 6$ by Orientation \ref{cons1}, Claims \ref{cla1}, \ref{cla6} and \ref{cla7}.

For $w\in L'_{41}\cup L'_{42}\cup L'_{51}$, we have $N(w)\cap J_1\neq \emptyset$ if $w\in L'_{41}\cup L'_{42}$; $N(w)\cap L'_{51}\neq\emptyset$ if $w\in L'_{41}$; $N(w)\cap L'_{41}\neq\emptyset$ if $w\in L'_{51}$ by the definitions of $L'_{41}$, $L'_{42}$, $L'_{51}$ and checking every shortest $(w,K')$-path. If $w\in L'_{42}$, then $w$ is not isolated in $G[L'_4\cup L'_5]$ by the definition of $L'_3$, and thus $N(w)\cap (L'_4\cup L'_5)\neq\emptyset$, which implies $N(w)\cap L'_{41}\neq\emptyset$ by the definition of $L'_{41}$. By Orientations \ref{cons1} and \ref{cons8_2}, we have $v\rightarrow (B-B_{10})\rightarrow J\rightarrow L'_{51}\cup L'_{42}\rightarrow L'_{41}\rightarrow J_1\rightarrow I_1\rightarrow A_1\rightarrow u$. Thus $\partial(v,w)\leq 4$ and $\partial(w,u)\leq 5$ for any $w\in L'_{41}\cup L'_{42}\cup L'_{51}$.

For $w\in L'_{52}\cup L'_{53}$, there exists a shortest $(w,K')$-path $P_w=wi_1i_2i_3i_4$ such that $i_1\in \cup_{i=1}^4 J_i$ by the definition of $L'_{51}$. Then $w\rightarrow \cup_{i=1}^4 J_i$ by Orientation \ref{cons8_2}, and thus $\partial(w,u)\leq 6$ by Claims \ref{cla1} and \ref{cla7}. By Orientations \ref{cons1} and \ref{cons8_2}, if $w\in L'_{52}$ and $w$ is isolated in $G[L'_5-L'_{51}]$, then $N(w)\cap L'_{51}\neq\emptyset$, and thus we have $ v\rightarrow(B-B_{10})\rightarrow J\rightarrow L'_{51}\rightarrow w$, which implies $\partial(v,w)\leq 4$; if $w\in L'_{52}$ and $N(w)\cap J_6\neq\emptyset$, then we have $J_6\rightarrow w$, which implies $\partial(v,w)\leq 3$; if $w\in L'_{53}$, then $w$ has a neighbor $w'\in L'_{52}$ and $N(w')\cap J_6\neq\emptyset$, and thus $w'\rightarrow w$, which implies $\partial(v,w)\leq 4$. Hence $\partial(v,w)\leq 4$ for any $w\in L'_{52}\cup L'_{53}$.

For $w\in L'_{43}\cup L'_{54}$, by the definitions of $L'_{43}$, $L'_{54}$, Orientations \ref{cons1}, \ref{cons8_2} and Lemma \ref{l1}, if $w\in L'_{43}$, then $v\rightarrow (B-B_{10})\rightarrow J_1\rightarrow I_1\rightarrow A\rightarrow u$ and $\theta(w,J_1)\leq 2$, and thus $\partial(v,w)\leq 4$ and $\partial(w,u)\leq 5$; if $w\in L'_{54}$, then $\partial(v,w')\leq 2$, $\partial(w',u)\leq 5$ for any $w'\in \cup_{i=1}^4 J_i$ by Claims \ref{cla1} and \ref{cla7}, and $\theta(w,\cup_{i=1}^4 J_i)\leq 2$, and hence $\partial(v,w)\leq 4$ and $\partial(w,u)\leq 7$.
\end{proof}
\vspace{10pt}

\begin{claim}\label{cla10}
{\rm (i)} For $w\in J_6$, we have $\partial(v,w)=2$ and $\partial(w,u)\leq 7$.\\
{\rm (ii)} For $w\in B_9$, we have $\partial(v,w)=1$ and $\partial(w,u)\leq 8$.	\end{claim}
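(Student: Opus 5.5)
The plan is to read both bounds straight off the orientation rules, using the tables already established. We handle $J_6$ first and then $B_9$, since the bound for $B_9$ will be derived from the one for $J_6$.

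\emph{Part (i).} Let $w\in J_6$. The bound $\partial(v,w)\le 2$ comes from Orientation \ref{cons1}: since $w\in J\subseteq S_{3,2}$, the vertex $w$ has a neighbor $b\in B$. Because $w\notin J_1$, we know $b$ is not adjacent to $w$ through $S_{2,2}\cup I$. The rules give $v\rightarrow \cup_{i=1}^9 B_i$ and $B\rightarrow J$. We must still rule out $b\in B_{10}$, which holds because $B_{10}\cap N(J)=\emptyset$ by Observation \ref{obs1}. Hence $v\rightarrow b\rightarrow w$. Equality $\partial(v,w)=2$ follows since $w\notin N(v)$.

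For $\partial(w,u)$, apply Observation \ref{obs1} to $w\in J_6$. It gives $N(w)\cap(I\cup S_{3,3}\cup L)=\emptyset$, $N(w)\cap S_{2,2}=\emptyset$, and $N(w)\cap L'\ne\emptyset$. Orientation \ref{cons8_2}(i) orients $J_6\rightarrow L'$, so we may pick $w'\in N(w)\cap L'$ with $w\rightarrow w'$. Claim \ref{cla9_2} then gives $\partial(w',u)\le 7$, with equality only when $w'\in L'_{54}$, so that naively $\partial(w,u)\le 8$. This is the main obstacle: we must show $w'$ can be chosen outside $L'_{54}$.

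To do this, note that $L'_{52}$ by definition contains every $w''\in L'_5-L'_{51}$ with $N(w'')\cap J_6\ne\emptyset$. Therefore any neighbor of $w$ in $L'_5$ lies in $L'_{51}\cup L'_{52}$, never in $L'_{53}\cup L'_{54}$. Every neighbor of $w$ in $L'$ therefore lies outside $L'_{54}$, so Claim \ref{cla9_2} yields $\partial(w',u)\le 6$ and hence $\partial(w,u)\le 1+6=7$. One should also check that the orientation $J\rightarrow \cdots$ in Orientation \ref{cons8_2} does not reverse the edge $ww'$ when $w'\in L'_3$ or $L'_{4}$. If it does, $w'$ receives an edge from $J$, and a short separate path through the rules $L'_{3}\rightarrow\cup_{i\le4}J_i$ or $L'_{41}\rightarrow J_1$ still gives a bound of at most $6$ from $w'$.

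\emph{Part (ii).} Let $w\in B_9$. Since $v\rightarrow \cup_{i=1}^9 B_i$, we get $\partial(v,w)=1$. By definition $w\in B\cap N(J_6)$, so choose $j\in N(w)\cap J_6$. The rule $B\rightarrow J$ gives $w\rightarrow j$, and part (i) then yields $\partial(w,u)\le 1+7=8$.
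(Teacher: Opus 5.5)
Your proof is correct and essentially matches the paper's argument. You use $v\to B\to J_6$ from Orientation~\ref{cons1}, and $J_6\to L'$ with every $L'$-neighbour of $w$ outside $L'_{54}$ by the definition of $L'_{52}$, so Claim~\ref{cla9_2} gives $\partial(w,u)\le 1+6$; part (ii) then follows from $B_9\to J_6$.

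Your closing hedge about the edge $ww'$ being reversed can be dropped. No rule in Orientation~\ref{cons8_2} sends an edge from $L'$ into $J_6$, since edges into $J$ go only to $J_1$ or $\cup_{i=1}^4 J_i$. Also, ``$w'$ receives an edge from $J$'' is the same direction $w\to w'$ anyway.
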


\begin{proof}
By Orientation \ref{cons1}, we have $\partial(v,w)=2$ if $w\in J_6$, and $\partial(v,w)=1$ if $w\in B_9$.  For $w\in J_6$, we have $N(w)\cap (L'-L'_{54})\neq\emptyset$ by the definitions of $L'_{52}$, which implies $\partial(w,u)\leq \partial(w,w')+\partial(w',u)\leq 1+6=7$ for $w'\in N(w)\cap (L'-L'_{54})$ by Orientation \ref{cons8_2} and Claim \ref{cla9_2}. For $w\in B_9$, we have $N(w)\cap J_6\neq\emptyset$, which implies $\partial(w,u)\leq \partial(w,J_6)+\partial(J_6,u)\leq 1+7=8$ by Orientation \ref{cons1}.
\end{proof}
\vspace{10pt}

Recall that $\rbjt{D_2}$ is the directed graph obtained from Orientation \ref{cons2}. To orient the edges of $G[J']$, $[B,J']$ and $[J,J']$, we partition $J'$ as follows. 
\begin{align*}
J'_1&=J'\cap V(\rbjt{D_2}),\quad J'_2=(J'-J'_1)\cap N(\cup_{i=1}^3 J_i), \quad J'_3=(J'-J'_1-J'_2)\cap N(\cup_{i=4}^6 J_i),\\
J'_4&=\{w\mid w\in J'- \cup_{i=1}^3 J'_i,\ w \text{ is isolated in } G[J'- \cup_{i=1}^3 J'_i]\},\\
J'_5&=(J'-\cup_{i=1}^4 J'_i)\cap N(\cup_{i=1}^6 B_i),\quad J'_6=(J'-\cup_{i=1}^5 J'_i)\cap N(\cup_{i=7}^{10} B_i).
\end{align*}
We further partition $J'_3$, $J'_4$, $J'_5$ and $J'_6$ as follows.
\begin{align*}
J'_{31}&=J'_3\cap N(\cup_{i=1}^6 B_i),\  J'_{32}=J'_3-J'_{31};\
J'_{41}=J'_4\cap N(\cup_{i=1}^6 B_i),\  J'_{42}=J'_4-J'_{41};\\		
J'_{51}&=J'_5\cap N(J'_6),\  J'_{52}=\{w\mid w\in J'_5-J'_{51},\ w\text{ is isolated in } G[J'_5-J'_{51}]\},\\ J'_{53}&=\{w\mid w\in J'_5-J'_{51},\ w\text{ is not isolated in } G[J'_5-J'_{51}]\};\\
J'_{61}&=J'_6\cap N(J'_5),\  J'_{62}=\{w\mid w\in J'_6-J'_{61},\ w\text{ is isolated in } G[J'_6-J'_{61}]\},\\ J'_{63}&=\{w\mid w\in J'_6-J'_{61},\ w\text{ is not isolated in } G[J'_6-J'_{61}]\}.
\end{align*}	

\begin{Orientation}\label{cons81}
{\rm (i)} Let $B\rightarrow J'_2\rightarrow J$, $ B_8\cup B_9\rightarrow J'_1$, $J'_{62}\rightarrow \cup_{i=7}^{10} B_i\rightarrow J'_{61}\rightarrow J'_{51}\cup J'_{62}$, $\cup_{i=1}^6 B_i\rightarrow J'_{52}\rightarrow J'_{51}\rightarrow \cup_{i=1}^6 B_i$, $B\rightarrow J'_{32}\rightarrow\cup_{i=4}^6 J_i\rightarrow J'_{31}\rightarrow \cup_{i=1}^6 B_i$, $J'_{32}\cup J'_{42}\cup J'_{6}\rightarrow J'_1\cup J'_2\cup J'_{31}\cup J'_{41}\cup J'_{5}$. 
Let $R=\cup_{i=1}^6 B_i$, $S=J'_{53}$, or $R=\cup_{i=7}^{10} B_i$, $S=J'_{63}$. Then $G[S]$ has no isolated vertex, and thus orient the edges of $G[S]$ and $[R,S]$ by an $R$-$S$ orientation.


{\noindent\rm (ii)} For $w\in J'_4$, $N(w)\cap B\neq\emptyset$. If $w$ is not an isolated vertex of $G[J']$, then $N(w)\cap (\cup_{i=1}^3 J'_i)\neq\emptyset$, and thus let $\cup_{i=1}^3 J'_i\rightarrow w\rightarrow \cup_{i=1}^6 B_i$ if $w\in J'_{41}$, and $\cup_{i=7}^{10} B_i\rightarrow w\rightarrow\cup_{i=1}^3 J'_i$ if $w\in J'_{42}$. Otherwise, $w$ is an isolated vertex of $G[J']$. Since $N(w)\cap J=\emptyset$, we have $|[w,B]|\geq 2$. If $w\in J'_{41}$, then $w\rightarrow w'$ for some $w'\in N(w)\cap (\cup_{i=1}^6 B_i)$, and the other edges of $[w,B]$ toward $w$; if $w\in J'_{42}$, then we orient the edges of $[w,B]$ in two ways.		
\end{Orientation}

\begin{claim}\label{cla9_1}
Table \ref{TableJprime} presents upper bounds on directed distances for $J'$.
\begin{table}[H]
\centering

\renewcommand{\arraystretch}{1.1}
\setlength{\tabcolsep}{4pt}
\begin{tabular}{|l|*{9}{>{\centering\arraybackslash}p{1.2cm}|}}
	\hline
	\textbf{for \(w\) in} & $J'_1$ & $J'_2$ & $J'_{31}$ & $J'_{32}$ & $J'_{41}$ & $J'_{42}$ & $J'_5$ & $J'_6$ \\
	\hline
	$\partial(w,u) \le$ & $2g^*-3$ & $5$ & $6$ & $9$ & $6$ & $10$ & $7$ & $11$ \\
	\hline
	$\partial(v,w) \le$ & $g^*-2$ & $g^*$ & $3$ & $g^*$ & $g^*+1$ & $g^*$ & $g^*+1$ & $g^*+1$ \\
	\hline
\end{tabular}\vspace{-7pt}
\caption{Upper bounds on directed distances for $w\in J'$.}
\label{TableJprime}
\vspace{-12pt}
\end{table}
\end{claim}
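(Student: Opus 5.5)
Each row of Table~\ref{TableJprime} is checked the same way as in Claims~\ref{cla6}--\ref{cla10}. For $w$ in a given part of $J'$, I will exhibit one directed path from $w$ into a set whose distance to $u$ is already bounded, and one directed path into $w$ from a set whose distance from $v$ is already bounded. The inputs are the existing bounds: $\partial(v,u)=g^*-1$ and $\partial(v,B_i)=1$ for $i\le 9$ (Claim~\ref{cla1}, Orientation~\ref{cons1}); $\partial(v,w)\le g^*-1$ or $\le 2$ on $B_{10}$ (Claim~\ref{cla7_1}); and the $\partial(\cdot,u)$ bounds for $B_i$ and $J_i$ in Tables~\ref{Table1}, \ref{Table3} and Claim~\ref{cla10}. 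In particular, $\partial(B_i,u)\le 5$ for $i\le 6$, $\le 9$ for $i\le 9$, $\le 2g^*-2$ on $B_{10}$, and $\partial(J_i,u)\le 4$ for $i\le 3$.

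\textbf{$J'_1$.} These vertices lie on the directed cycles $C_1$ or $C_2$ of Orientation~\ref{cons2}(ii), of length at most $g^*$ through $v$. So $\partial(v,w)\le g^*-2$, since $w$ is not adjacent to $v$. Also $\partial(w,v)\le g^*-2$, hence $\partial(w,u)\le (g^*-2)+(g^*-1)=2g^*-3$.

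\textbf{$J'_2$.} Orientation~\ref{cons81} gives $B\to w\to \cup_{i=1}^3J_i$. Then $\partial(w,u)\le 1+4=5$. The worst in-neighbour is in $B_{10}$, which gives $\partial(v,w)\le g^*$.

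\textbf{$J'_{31}$.} Here $\cup_{i=4}^6J_i\to w\to \cup_{i=1}^6B_i$. Since $\partial(v,J_i)\le 2$, we get $\partial(v,w)\le 3$ and $\partial(w,u)\le 1+5=6$.

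\textbf{$J'_{32}$.} Here $B\to w\to \cup_{i=4}^6J_i$. This gives $\partial(v,w)\le g^*$. For the other direction I use $\partial(J_4,u)\le5$, $\partial(J_5,u)\le 8$ and $\partial(J_6,u)\le 7$, so $\partial(w,u)\le 9$.

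\textbf{$J'_4$.} There are two cases. If $w$ has a neighbour in $\cup_{i=1}^3J'_i$, Orientation~\ref{cons81}(ii) routes $w\in J'_{41}$ through $J'_i\to w\to\cup_{i\le6}B_i$, and routes $w\in J'_{42}$ through $\cup_{i\ge7}B_i\to w\to J'_i$. In either case the bounds come from the rows already proved. If $w$ is isolated in $G[J']$, we use $|[w,B]|\ge 2$: for $J'_{41}$ there is an out-edge to $B_{\le6}$ and an in-edge from $B$, and for $J'_{42}$ the two-way orientation gives in- and out-neighbours in $\cup_{i\ge7}B_i$. The targets are $\partial(w,u)\le 6$ or $\le 10$, and $\partial(v,w)\le g^*+1$.

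\textbf{$J'_5$ and $J'_6$.} These use the chains $J'_{62}\to\cup_{i\ge7}B_i\to J'_{61}\to J'_{51}\to \cup_{i\le6}B_i$ and $\cup_{i\le6}B_i\to J'_{52}\to J'_{51}$, together with Lemma~\ref{l1} for $J'_{53}$ and $J'_{63}$, so that $\theta(w,R)\le 2$. For example, for $w\in J'_{53}$ we get $\partial(w,u)\le 2+5=7$ and $\partial(v,w)\le 2+1$. For $w\in J'_{63}$, we get $\partial(w,u)\le 2+9=11$; if the path lands in $B_{10}$, the bound is $2+2g^*-2$, which must be compared with $11$.

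I expect the main obstacle to be the vertices whose best route uses $B_{10}$, since there $\partial(B_{10},u)$ can be as large as $2g^*-2=8$. This affects $J'_{42}$, $J'_{6}$, and the $v$-side bounds of the form $g^*+1$. Orientation~\ref{cons2} gives $B_{10}$ a short return to $v$ ($\theta\le g^*-1$), so for $B_{10}$ endpoints I would route through $v$ and then use $\partial(v,u)=g^*-1$. Otherwise I would instead use the exact path given by $J'_{62}\to\cup_{i\ge7}B_i$. Checking that $\partial(w,u)\le 11$ and $\partial(v,w)\le g^*+1$ hold in every subcase, including the isolated-vertex alternatives, is the delicate bookkeeping step.
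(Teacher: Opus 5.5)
Your route is the paper's: for each part of $J'$ you read one in-arc and one out-arc off Orientation~\ref{cons81} and plug in the earlier bounds on the $B_i$ and $J_i$. The rows $J'_1$, $J'_2$, $J'_{31}$, $J'_{32}$, $J'_4$, $J'_{53}$, $J'_{63}$ go through as you describe. For $J'_{42}$ the table asks for $\partial(v,w)\le g^*$, not $g^*+1$, but your in-arc from $\cup_{i\ge7}B_i$ already gives $g^*$.

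The gap is in $J'_{52}$ and $J'_{62}$. Your transcribed chain drops the arc $J'_{61}\to J'_{62}$. At a vertex $w\in J'_{62}$, every other edge is either oriented away from $w$ ($J'_{62}\to\cup_{i\ge7}B_i$ and $J'_6\to J'_1\cup J'_2\cup J'_{31}\cup J'_{41}\cup J'_5$) or oriented arbitrarily. So without that arc you have no bound on $\partial(v,w)$ there. More to the point, you never show that $w\in J'_{62}$ has a neighbour in $J'_{61}$. Nor do you show that $w\in J'_{52}$ has a neighbour in $J'_{51}$, which you need for $\partial(w,u)\le 1+6=7$. Neither fact is immediate from the definitions of $J'_{52}$ and $J'_{62}$; both use the definition of $J'_4$. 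The paper's argument runs as follows.
\begin{itemize}
\item Since $w\notin J'_4$, $w$ has a neighbour in $J'-\cup_{i=1}^3J'_i=J'_4\cup J'_5\cup J'_6$.
\item It has none in $J'_4$, because those vertices are isolated in $G[J'-\cup_{i=1}^3J'_i]$.
\item It has none in $J'_6$ (resp.\ $J'_5$), since $w\notin J'_{51}$ (resp.\ $w\notin J'_{61}$).
\item Because $w$ is isolated in $G[J'_5-J'_{51}]$ (resp.\ $G[J'_6-J'_{61}]$), that neighbour lies in $J'_{51}$ (resp.\ $J'_{61}$).
\end{itemize}
With this, $J'_{52}$ gets $\partial(w,u)\le 7$ and $\partial(v,w)\le 2$. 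Likewise $J'_{62}$ gets $\partial(w,u)\le 10$ and $\partial(v,w)\le g^*+1$, the latter through the arc $J'_{61}\to J'_{62}$.

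On the other hand, the obstacle you anticipate is not one. Since $g^*\le 5$, Claims~\ref{cla7}, \ref{cla7_1} and \ref{cla10} give uniform bounds on all of $\cup_{i=7}^{10}B_i$: $\partial(w,u)\le 9$ and $\partial(v,w)\le g^*-1$. The $B_{10}$ value $2g^*-2\le 8$ is dominated by the bound $9$ on $B_8$. So no case split on $B_{10}$ and no rerouting through $v$ is needed anywhere in the table.
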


\begin{proof}
For $w\in J'_1$, by Orientation \ref{cons2}, there exists a directed cycle of length at most $g^*$ containing $v$ and $w$. Then $\theta(v,w)\leq g^*-2$, and thus $\partial(w,u)\leq 2g^*-3$ and $\partial(v,w)\leq g^*-2$ since $\partial(v,u)=g^*-1$ in Claim \ref{cla1}.

By Claims \ref{cla1}, \ref{cla7}, \ref{cla7_1} and \ref{cla10}, we have $\partial(w,u)\leq 4$ and $\partial(v,w)\leq 2$ for $w\in \cup_{i=1}^3 J_i$, $\partial(w,u)\leq 8$ and $\partial(v,w)\leq 2$ for $w\in \cup_{i=4}^6 J_i$, $\partial(w,u)\leq 5$ and $\partial(v,w)\leq 1$ for $w\in \cup_{i=1}^6 B_i$, and $\partial(w,u)\leq 9$ and $\partial(v,w)\leq g^*-1$ for $w\in \cup_{i=7}^{10} B_{i}$. We next analyze the directed distance of $J'-J'_1$.

For $w\in J'_2$, we have $N(w)\cap (\cup_{i=1}^3 J_i)\neq\emptyset$. Then by Orientation \ref{cons81}, we have $B\rightarrow J'_2\rightarrow \cup_{i=1}^3 J_i$, which implies $\partial(v,w)\leq g^*$ and $\partial(w,u)\leq 5$. 

For $w\in J'_3$, we have $N(w)\cap (\cup_{i=4}^6 J_i)\neq\emptyset$. Then by Orientation \ref{cons81}, we have $\partial(w,u)\leq 6$ and $\partial(v,w)\leq 3$ if $w\in J'_{31}$, and $\partial(w,u)\leq 9$ and $\partial(v,w)\leq g^*$ if $w\in J'_{32}$.

For $w\in J'_4$, by (ii) of Orientation \ref{cons81},  we have $\partial(w,u)\leq 6$ and $\partial(v,w)\leq g^*+1$ if $w\in J'_{41}$, $\partial(w,u)\leq 10$ and $\partial(v,w)\leq g^*$ if $w\in J'_{42}$.

For any $w\in J'_{51}\cup J'_{52}\cup J'_{61}\cup J'_{62}$, the following hold by definition.
If $w\in J'_{61}$, then $N(w)\cap J'_{51}\neq\emptyset$; if $w\in J'_{51}$, then $N(w)\cap J'_{61}\neq\emptyset$.
By the definition of $J'_4$, every vertex $w\in J'_{52}$ satisfies $N(w)\cap (J'_5\cup J'_6)\neq\emptyset$, which yields $N(w)\cap J'_{51}\neq\emptyset$.
Similarly, we have $N(w)\cap J'_{61}\neq\emptyset$ for each $w\in J'_{62}$. By Orientation \ref{cons81}, we have $J'_{62}\rightarrow\cup_{i=7}^{10} B_i\rightarrow J'_{61}\rightarrow J'_{51}\cup J'_{62}$, and $\cup_{i=1}^6 B_i\rightarrow J'_{52}\rightarrow J'_{51}\rightarrow \cup_{i=1}^6 B_i$. Then $\partial(w,u)\leq 6$ and $\partial(v,w)\leq g^*+1$ if $w\in J'_{51}$, $\partial(w,u)\leq 7$ and $\partial(v,w)\leq 2$ if $w\in J'_{52}$, $\partial(w,u)\leq 7$ and $\partial(v,w)\leq g^*$ if $w\in J'_{61}$, $\partial(w,u)\leq 10$ and $\partial(v,w)\leq g^*+1$ if $w\in J'_{62}$.

For $w\in J'_{53}\cup J'_{63}$, by Lemma \ref{l1}, we have $\theta(w,\cup_{i=1}^6 B_i)\leq 2$ if $w\in J'_{53}$, and $\theta(w,\cup_{i=7}^{10} B_i)\leq 2$ if $w\in J'_{63}$, which implies $\partial(w,u)\leq 7$ and $\partial(v,w)\leq 3$ if $w\in J'_{53}$, and $\partial(w,u)\leq 11$ and $\partial(v,w)\leq g^*+1$ if $w\in J'_{63}$.
\end{proof}

To orient some edges incident to $I_6$,	we define a partition of $I_6$ as follows.
$$
I_{61}=I_6\cap N(\cup_{i=1}^3 A_i),\ I_{62}=I_6-I_{61}.
$$
We further partition $I_{61}$ as follows.
\begin{align*}
I_{61}^{(1)}&=I_{61}\cap N(I_{62}),\\
I_{61}^{(2)}&=\{w\mid w\in I_{61}-I_{61}^{(1)}\text{, }w \text{ is isolated in } G[I_{61}-I_{61}^{(1)}]\},\\
I_{61}^{(3)}&=\{w\mid w\in I_{61}-I_{61}^{(1)}\text{, }w \text{ is not isolated in } G[I_{61}-I_{61}^{(1)}]\};
\end{align*}

\begin{Orientation}\label{cons8}
{\noindent\rm (i)} Let $\cup_{i=1}^3 A_i\rightarrow I_{61}^{(1)}\rightarrow I_{62}$, $R=\cup_{i=1}^3 A_i$ and $S=I_{61}^{(3)}$. We then orient the edges of $G[S]$ and $[R,S]$ by an $R$-$S$ orientation. 

{\noindent\rm (ii)} Let  $w\in I_{61}^{(2)}$ be an undirected vertex. Then $N(w) \cap (\cup_{i=1}^3 A_i) \neq \emptyset$. Using the following iterative procedure, we assign orientations to some edges of $G$ such that  $I\rightarrow \cup_{i=4}^9 A_i$. If $w$ is not isolated in $G[I\cup I']$, then $N(w)\cap ((I-I_6)\cup I'\cup I_{61}^{(1)})\neq \emptyset$, and thus let $\cup_{i=1}^3 A_i\rightarrow w\rightarrow (I-I_6)\cup I'\cup I_{61}^{(1)}$ and $N(w)\cap ((I-I_6)\cup I')\rightarrow A$. Now we assume $w$ is isolated in $G[I\cup I']$.  If $|[w,A]|\geq 2$, then orient $w'w$ as $w'\rightarrow w$ for some $w'\in N(w)\cap (\cup_{i=1}^3 A_i)$, and the other edges of $[w,A]$ away from $w$. If $|[w,A]|=1$, then $N(w)\cap K'\neq\emptyset$. Let $w_0 \in N(w) \cap(\cup_{i=1}^3 A_i)$ and $w_1\in N(w)\cap K'$ such that $|N(w_1)\cap I|$ is maximum.

For $|N(w_1)\cap I|\geq 2$, let $w_2\in N(w_1)\cap I-\{w\}$ and $w_3\in N(w_2)\cap A$ such that $w_1w_2w_3$ is a mixed path by iteration. Moreover, if $w_3\rightarrow w_2$, then $w_3\in \cup_{i=1}^3 A_i$. Since $w$ is undirected, $w_0ww_1w_2w_3$ is a mixed path or cycle. If $w_0ww_1w_2w_3$ is undirected and $w_0\neq w_3$, then orient $w_0ww_1w_2w_3$ as a directed path from $w_0$ to $w_3$. Otherwise, we orient  $w_0ww_1w_2w_3$ by a $B$-$C$ or $B$-$P$ orientation. 

For $|N(w_1)\cap I|= 1$, we have $|N(x)\cap I|=1$ for any $x\in N(w)\cap K'$ by the maximality of $|N(w_1)\cap I|$. Then since $g^*\in\{4,5\}$ and $|[w,A]|=1$, the shortest cycle containing $w_0w$ is a $5$-cycle $C=w_0wy_1y_2y_3w_0$, where $y_1\in K'$, $y_2\in K'\cup K$ and $y_3\in I$, which implies $g^*=5$.  Since $w$ is undirected and $|N(y_1)\cap I|=1$, we see $w_0wy_1y_2$ is undirected. If $C$ is mixed, then orient the edges of $C$ by a $B$-$C$ orientation. If $C$ is not mixed, there exists a mixed path (or cycle) $C'=w_0wy_1y_2y_3y'_4$ such that $y'_4\in A$, and $y'_4 \in \cup_{i=1}^3 A_i$ if $y_3\rightarrow y_2$. We then orient the edges of $C'$ by a $B$-$P$ or $B$-$C$ orientation.
\end{Orientation}

The directed graph obtained by Orientation \ref{cons8} is denoted by $\rbjt{D_3}$. To orient the edges incident to $A_9$, we give some notations as follows:
\begin{align*}
I_{61}^{(21)}&=\{w\mid w\in I_{61}^{(2)}\text{ and } w'\rightarrow w \text{ in $\rbjt{D_3}$ for some }w'\in N(w)\cap A\},\\
I_{61}^{(22)}&=\{w\mid w\in I_{61}^{(2)}\text{ and } w\rightarrow w' \text{ in $\rbjt{D_3}$ for any } w'\in N(w)\cap A\};\\
I'_1&=I'\cap V(\rbjt{D_3}),\quad I'_2=(I'-I'_1)\cap N(I_{61}-I_{61}^{(22)});\\ 
I_{62}^{(1)}&=I_{62}\cap N(I_{61}),\quad I_{62}^{(2)}=(I_{62}-I_{62}^{(1)})\cap V(\rbjt{D_3}),\quad I_{62}^{(3)}=I_{62}-I_{62}^{(1)}-I_{62}^{(2)}.
\end{align*}		

It follows that $I_{61}^{(21)},I_{61}^{(22)}$ form a partition of $I_{61}^{(2)}$, $I'_1, I'_2 \subseteq I'$ with $I'_1 \cap I'_2 = \emptyset$, and $I_{62}^{(1)}, I_{62}^{(2)}, I_{62}^{(3)}$ form a partition of $I_{62}$. Furthermore, we define a partition of $A_9$ as follows.
\begin{align*}
A_{91}&=A_9\cap N(I'_1),\quad A_{92}=(A_9-A_{91})\cap N(I'_2),\\
A_{93}&=(A_9-A_{91}-A_{92})\cap N(I_{61}\cup I_{62}^{(1)}\cup I_{62}^{(2)}),\\
A_{94}&=\{w\mid w\in A_9-\cup_{i=1}^3 A_{9i}\text{, } w\text{ is isolated in } G[A_9-\cup_{i=1}^3 A_{9i}]\},\\
A_{95}&=\{w\mid w\in A_9-\cup_{i=1}^3 A_{9i}\text{, } w\text{ is not isolated in } G[A_9-\cup_{i=1}^3 A_{9i}]\};		
\end{align*}

From the above arguments, $I_6=I_{61}\cup I_{62}=I_{61}^{(1)}\cup I_{61}^{(21)}\cup I_{61}^{(22)}\cup I_{61}^{(3)}\cup I_{62}^{(1)}\cup I_{62}^{(2)}\cup I_{62}^{(3)}$.

\begin{Orientation}\label{cons8.5}
{\noindent\rm (i)} Let $\cup_{i=1}^3 A_i\rightarrow I_{61}^{(1)}\rightarrow I_{62}^{(1)}\cup I_{62}^{(2)}\rightarrow (A-A_{94}-A_{95})\rightarrow u$, $(I_{61}-I_{61}^{(22)})\rightarrow I'_2$, $(I-I_{62})\rightarrow I'_1\cup I'_2\rightarrow (A-A_{94}-A_{95})$, $I_{61}\rightarrow A_{93}$, $R=\{u\}$, $S=A_{95}$. Then orient the edges of $[R,S]$ and $G[S]$ by an $R$-$S$ orientation. 

{\noindent\rm (ii)} Let $w\in A_{94}$ be an undirected vertex. Using the following iterative procedure, we assign orientations to some edges of $G$. If $w$ is not an isolated vertex in $G[A\cup A']$, then $N(w)\cap (A\cup A'-A_{94}-A_{95})\neq\emptyset$, and thus let $u\rightarrow w\rightarrow N(w)\cap (A\cup A'-A_{94}-A_{95}) \rightarrow u$.

Suppose $w$ is an isolated vertex in $G[A\cup A']$. Then by $g^*\in \{4,5\}$, we have $uw$ in a $2$-cycle, $4$-cycle or $5$-cycle. From these cycles, select the shortest cycle and denote it as $C_1$. If $C_1$ is a $2$-cycle, then orient the edges of $C_1$ by $B$-$C$ orientation. If $C_1$ is an $r$-cycle, where $r=4$ or $5$ $($and $r=5$ holds only if $g^*=5)$, then $C_1$ is given by either $uww_1w_2u$ or $uww_1w^*w_2u$. If $w^*\in A$, then $w$ is in a $4$-cycle since $w^*u\in E(G)$, a contradiction to $r=5$. By the definitions of $A_{94}$ and $I'_2$, we have $w_1\in (I'-I'_1- I'_2)\cup I_{62}^{(3)}$,  $w^*\in (I'\cup I-I_{61})\cup I_{61}^{(22)}$ and $w_2\in A$. Since $w$ is undirected and the argument is similar to Orientation \ref{cons2}, $uww_1$ is undirected if $r=4$, and $uww_1w^*$ is undirected if $r=5$. 

If $C_1$ is mixed, then orient the edges of $C_1$ by a $B$-$C$ orientation. If $C_1$ is not mixed, then neither $w_1w_2u$ for $r=4$ nor $w^*w_2u$ for $r=5$ is mixed. 
We claim $w_1\notin V(\rbjt{D_3})$ if $r=4$, and $w^*\notin V(\rbjt{D_3})\cup (\cup_{i=1}^5 I_i)\cup I_{62}^{(1)}\cup I_{62}^{(2)}$ if $r=5$. Since $w_1\in (I'-I'_1- I'_2)\cup I_{62}^{(3)}$, we have $w_1\notin V(\rbjt{D_3})$. If $w^*\in V(\rbjt{D_3})\cup (\cup_{i=1}^5 I_i)\cup I_{62}^{(1)}\cup I_{62}^{(2)}$, then $w^*\in I'_{1}\cup (\cup_{i=1}^5 I_i)\cup I_{61}^{(22)}\cup  I_{62}^{(1)}\cup I_{62}^{(2)}$, which implies $w_2\in A-A_{94}-A_{95}$. By Orientation \ref{cons1} and the definition of $I_{61}^{(22)}$, we obtain $w^*\rightarrow w_2\rightarrow u$, yielding a contradiction. Then $w_1,w^*\notin V(\rbjt{D_3})$ and $w^*\in (I'-I'_1)\cup I_{62}^{(3)}$, which yields $w_1w_2$ for $r=4$ and $w^*w_2$ for $r=5$ remain undirected in Orientation \ref{cons8}. By iteration, there exists a directed cycle $C_2$ containing $w_1,w_2,u$ when $r=4$ and $w^*,w_2,u$ when $r=5$. Hence there exists a mixed cycle $C_3$ derived from $C_1$ and $C_2$ such that $uw\in E(C_3)$ and $|C_1|=|C_2|$. We then orient the edges of $C_3$ by a $B$-$C$ orientation.
\end{Orientation}

The directed graph obtained by Orientation \ref{cons8.5} is denoted by $\rbjt{D_4}$.

\begin{claim}\label{cla9}
{\rm (i)} For $w\in I_{61}^{(1)}$, $\partial(w,u)\leq 3$ and $\partial(v,w)\leq 4$. \\{\rm (ii)} For $w\in I_{61}^{(21)}$, $\partial(w,u)\leq g^*$ and $\partial(v,w)\leq 4$.\\{\rm (iii)} For $w\in I_{61}^{(22)}$, $\partial(w,u)\leq 2$ and $\partial(v,w)\leq g^*+2$.\\ 
{\rm (iv)} For $w\in I_{61}^{(3)}$, $\partial(w,u)\leq 3$ and $\partial(v,w)\leq 5$.\\ {\rm (v)} For $w\in I_{62}^{(1)}$, $\partial(w,u)\leq 2$ and $\partial(v,w)\leq 5$. \\ {\rm (vi)} For $w\in I_{62}^{(2)}$, $\partial(w,u)\leq 2$ and $\partial(v,w)\leq g^*+2$.
\end{claim}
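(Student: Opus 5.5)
The plan is to verify each item by exhibiting explicit directed paths built from Orientations \ref{cons1}, \ref{cons8} and \ref{cons8.5}, and to bound their lengths using the bounds already established for $A_1\cup A_2\cup A_3$ (Claim \ref{cla1}: $\partial(v,w)\le 3$ and $\partial(w,u)=1$), for $I-I_6$ (Claims \ref{cla1} and \ref{cla7}), and $\partial(v,u)=g^*-1$. Two facts are used repeatedly. First, every vertex of $\cup_{i=1}^3A_i$ is reachable from $v$ within $3$ steps. Second, by Orientation \ref{cons8.5}(i), every edge from $I_{61}\cup I_{62}^{(1)}\cup I_{62}^{(2)}$ into $A-A_{94}-A_{95}$ points into $A$, and $A-A_9\to u$, $A_{9}-A_{94}-A_{95}\to u$, so such vertices satisfy $\partial(w,u)\le 2$ once they have an out-neighbour in $A$.

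For (i), take $w\in I_{61}^{(1)}$. We have $\cup_{i=1}^3A_i\to w$, so $\partial(v,w)\le 3+1=4$. Since $w$ has a neighbour in $I_{62}$ and $w\to I_{62}$ lies in $I_{62}^{(1)}$, which is oriented into $A-A_{94}-A_{95}\to u$, we get $\partial(w,u)\le 3$. For (v), take $w\in I_{62}^{(1)}$. It has a neighbour in $I_{61}$; if that neighbour lies in $I_{61}^{(1)}$, we get $\partial(v,w)\le 5$. We also need $w$ to have an out-neighbour in $A$: by Observation \ref{obs1}, $w$ has neighbours only in $A\cup I_6\cup I'\cup K'$ and $N(w)\cap A\neq\emptyset$ (it lies in $S_{2,3}$). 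Those edges go $w\to A$, which gives $\partial(w,u)\le2$. Item (vi) is analogous: $w\in I_{62}^{(2)}\subseteq V(\rbjt{D_3})$ was oriented inside a path or cycle of Orientation \ref{cons8}(ii) of the form $w_0ww_1w_2w_3$ or $C,C'$. I will read off from that bidirectional structure a directed path from some $A_1\cup A_2\cup A_3$ vertex to $w$ of length at most $g^*-1$, which gives $\partial(v,w)\le g^*+2$. I also need $w\to A$, which gives $\partial(w,u)\le 2$.

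For (ii) and (iii), work with $I_{61}^{(2)}$ using the case split of Orientation \ref{cons8}(ii). If $w$ is not isolated in $G[I\cup I']$, then $\cup_{i=1}^3A_i\to w\to (I-I_6)\cup I'\cup I_{61}^{(1)}$, followed by one more arc to $A$ and then $u$. This gives $\partial(v,w)\le4$ and $\partial(w,u)\le 3\le g^*$. If $|[w,A]|\ge2$, we have $w'\to w$ from $\cup_{i=1}^3A_i$ and $w\to A\to u$, which gives $4$ and $2$. The remaining case $|[w,A]|=1$ uses the mixed path $w_0ww_1w_2w_3$ or the $5$-cycle $C$/path $C'$, and I split according to the direction of $w_0w$. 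If $w_0\to w$ (so $w\in I_{61}^{(21)}$), then $\partial(v,w)\le 4$, and $w$ returns to $A$ along the path within at most $g^*-1$ arcs, ending at $w_3$ or $y_3'\in A$, so $\partial(w,u)\le g^*$. If $w\to w_0$, then $w\in I_{61}^{(22)}$, which gives $\partial(w,u)\le2$. The reverse direction travels from $w_3\in\cup_{i=1}^3A_i$ (guaranteed by the rule ``if $w_3\to w_2$ then $w_3\in\cup_{i=1}^3A_i$'') back through at most $g^*-1$ arcs. This gives $\partial(v,w)\le 3+(g^*-1)=g^*+2$. For (iv), Lemma \ref{l1} with $R=\cup_{i=1}^3A_i$ gives $\theta(w,R)\le2$, hence $\partial(v,w)\le 3+2=5$ and $\partial(w,u)\le 2+1=3$.

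The main obstacle is the iterative case in (ii)/(iii) and (vi). There I must carefully check that the endpoint in $A$ is entered correctly, and that in the $B$-$C$ case (when $w_0=w_3$, or $C$ is mixed) the directed cycle has length at most $g^*$, so that both directions stay within the budget. I must also check that earlier orientations, such as $I\to\cup_{i=4}^9A_i$, do not invalidate the directions used.
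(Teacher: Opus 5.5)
Your plan is correct and is essentially the paper's proof. It treats (i) and (v) via $\cup_{i=1}^3A_i\to I_{61}^{(1)}\to I_{62}^{(1)}\to (A-A_{94}-A_{95})\to u$; your hedge in (v) is unnecessary, since any $I_{61}$-neighbour of a vertex of $I_{62}$ lies in $I_{61}^{(1)}$ by definition. It treats (iv) via Lemma~\ref{l1}, (vi) via a directed path of length at most $g^*-1$ from $\cup_{i=1}^3A_i$ to $w$, and (ii)/(iii) via the case analysis of Orientation~\ref{cons8}(ii). The endpoint check you flag there is immediate: the last $I$-vertex on the path is oriented in Orientation~\ref{cons8}, so it lies in $(I-I_6)\cup I_{61}\cup I_{62}^{(1)}\cup I_{62}^{(2)}$, and all $A$-neighbours of such vertices lie in $A-A_{94}-A_{95}$ by the definitions of $A_4,\dots,A_8$ and $A_{93}$.

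There is one slip. Edges between $\cup_{i=1}^3A_i$ and $I_{61}$ need not point into $A$ (indeed $\cup_{i=1}^3A_i\to I_{61}^{(1)}$). So in the non-isolated subcase of (ii) with $w\to I_{61}^{(1)}$ there may be no further arc into $A$, and you should instead use (i) to get $\partial(w,u)\le 1+3=4\le g^*$, as the paper does.
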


\begin{proof}
If $w\in I_{61}\cup I_{62}^{(1)}\cup I_{62}^{(2)}$, then $N(w)\cap (A-A_{94}-A_{95})\neq\emptyset$ by the definition of $A_{93}$.

For $w\in I_{61}^{(1)}\cup I_{62}^{(1)}$, we have $N(w)\cap I_{62}^{(1)}\neq\emptyset$ if $w\in I_{61}^{(1)}$, and $N(w)\cap I_{61}^{(1)}\neq\emptyset$ if $w\in I_{62}^{(1)}$, and $\cup_{i=1}^3 A_i\rightarrow I_{61}^{(1)}\rightarrow I_{62}^{(1)}\rightarrow  (A-A_{94}-A_{95})\rightarrow u$ by Orientation \ref{cons8.5}, which implies (i) and (v) hold by Claim \ref{cla1}.

For $w\in I_{61}^{(3)}$, we have $\theta(w,\cup_{i=1}^3A_i)\leq 2$ by Orientation \ref{cons8} and Lemma \ref{l1}, and then (iv) holds by Claim \ref{cla1}. 

For $w\in I_{62}^{(2)}$, by the definition of $I_{62}^{(2)}$ and Orientation \ref{cons8}, there is a directed path or cycle $w_0w_1\dots w_{t-1} w_{t}$ such that $t\leq g^*$, $w_0\in \cup_{i=1}^3 A_i$ , $w_{t-1}=w$ and $w_{t}\in A-A_{94}-A_{95}$, then $w_0\rightarrow w_1\rightarrow \dots \rightarrow w_{t-1}\rightarrow w_t\rightarrow u$ by Orientation \ref{cons8.5}. Then $\partial(w,u)\leq 2$ and $\partial(v,w)\leq g^*+2$ by Claim \ref{cla1}, which implies (vi) holds. 

For $w\in I_{61}^{(2)}$, by Orientations \ref{cons8}, \ref{cons8.5}, the definition of $I_{61}^{(21)}$, if $w$ is not an isolated vertex in $G[I\cup I']$, then $\cup_{i=1}^3 A_i\rightarrow w\rightarrow (I-I_6)\cup I'_1\rightarrow (A-A_9)\cup A_{91}\rightarrow u$ or $\cup_{i=1}^3 A_i\rightarrow w\rightarrow I_{61}^{(1)}$, which implies $w\in I_{61}^{(21)}$, $\partial(v,w)\leq 4$ and $\partial(w,u)\leq 4$ by Claim \ref{cla1} and (i); if $w$ is an isolated vertex in $G[I\cup I']$ and $|[w,A]|\geq 2$, then there exist $w'\in \cup_{i=1}^3 A_i, w''\in A-A_{94}-A_{95}$ such that $w'\rightarrow w\rightarrow w''\rightarrow u$, which implies $w\in I_{61}^{(21)}$, and $\partial(v,w)\leq 4$ and $\partial(w,u)\leq 2$. 

Assume now $w$ is an isolated vertex in $G[I\cup I']$ and $|[w,A]|=1$. By Orientation \ref{cons8}, if $|N(w_1)\cap I|\geq 2$, then there exists a directed path (or cycle) $w_0wx_1x_2x_3$ if $w\in I_{61}^{(21)}$, and $w_0x_1x_2wx_3$ if $w\in I_{61}^{(22)}$ such that $w_0\in \cup_{i=1}^3 A_i$ and $x_3 \in A-A_{94}-A_{95}$; 
if $|N(w_1)\cap I|= 1$, then $g^*=5$ and there exists a directed path (or cycle) $w_0wz_1z_2z_3z_4$ if $w\in I_{61}^{(21)}$, and $w_0z_1z_2z_3wz_4$ if $w\in I_{61}^{(22)}$ such that $w_0\in \cup_{i=1}^3 A_i$ and $z_4\in A-A_{94}-A_{95}$. Therefore, (ii) and (iii) follow from Claim \ref{cla1} and $(A-A_{94}-A_{95})\rightarrow u$ in Orientation \ref{cons8.5}.
\end{proof}

\begin{claim}\label{cla9.5}
Table \ref{Table4} presents upper bounds on directed distances for $I_1'\cup I_2'\cup A_9$.
\begin{table}[ht]
\centering

\renewcommand{\arraystretch}{1.1}
\setlength{\tabcolsep}{4pt}
\begin{tabular}{|l|*{8}{>{\centering\arraybackslash}p{1.2cm}|}}
	\hline
	\textbf{for \(w\) in} &$I'_1$&$I'_2$&$A_{91}$&$A_{92}$&$A_{93}$&$A_{94}$&$A_{95}$\\
	\hline
	$\partial(w,u)\le$ &$2$&$2$&$1$&$1$&$1$&$g^*-1$&$2$\\
	\hline
	$\partial(v,w)\le$ &$5$&$6$&$6$&$7$&$g^*+3$&$2g^*-2$&$g^*+1$\\
	\hline
\end{tabular}
\vspace{-5pt}
\caption{Upper bounds on directed distances for some $w\in I'_1\cup I'_2\cup A_9$.}
\label{Table4}
\vspace{-12pt}
\end{table}
\end{claim}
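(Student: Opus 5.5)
The plan is to go through the sets in Table \ref{Table4} one at a time. For each $w$, I will exhibit an explicit directed walk from $v$ to $w$ and one from $w$ to $u$. Each walk will use only orientations already fixed in Orientations \ref{cons1}, \ref{cons8} and \ref{cons8.5}, together with bounds already established for neighbouring sets (Claims \ref{cla1}, \ref{cla7} and \ref{cla9}). The bounds for $\partial(w,u)$ are the easy half: Orientation \ref{cons8.5}(i) gives $I'_1\cup I'_2\rightarrow (A-A_{94}-A_{95})\rightarrow u$, hence $\partial(w,u)\le 2$ on $I'_1\cup I'_2$. Since $A_{91}\cup A_{92}\cup A_{93}\subseteq A-A_{94}-A_{95}$, we also get $\partial(w,u)=1$ there. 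For $A_{95}$, the $R$-$S$ orientation with $R=\{u\}$ and Lemma \ref{l1} give $\theta(w,u)\le 2$, so $\partial(w,u)\le 2$ and $\partial(v,w)\le \partial(v,u)+2=g^*+1$ by Claim \ref{cla1}(i).

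For $\partial(v,w)$ on $I'_2$: by definition $w$ has a neighbour in $I_{61}-I_{61}^{(22)}$, and $(I_{61}-I_{61}^{(22)})\rightarrow I'_2$. Claim \ref{cla9}(i),(ii),(iv) bounds $\partial(v,\cdot)$ on $I_{61}^{(1)}\cup I_{61}^{(21)}\cup I_{61}^{(3)}$ by $5$, which yields $\partial(v,w)\le 6$. For $I'_1=I'\cap V(\rbjt{D_3})$, I will trace how $w$ entered $\rbjt{D_3}$ in Orientation \ref{cons8}(ii). There are two cases. In the first, $w$ is an out-neighbour of some non-isolated $w'\in I_{61}^{(2)}$ with $\cup_{i=1}^3A_i\rightarrow w'\rightarrow w$; this gives $\partial(v,w)\le 3+1+1=5$ using Claim \ref{cla1} ($\partial(v,A_i)\le 3$ for $i\le 3$). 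In the second, $w$ lies on one of the short bidirectional paths or cycles of length at most $g^*$ through a vertex of $\cup_{i=1}^3 A_i$. In that case the position of $w$ on the path must be controlled. I expect $w$ to sit in the $K'$ slot adjacent to $I$ (e.g. $y_1$ or $w_1$), giving $\partial(v,w)\le 3+2=5$, but this needs a check of each path type.

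For $A_9$: if $w\in A_{91}$, it has a neighbour in $I'_1$, and $I'_1\rightarrow A$ gives $\partial(v,w)\le 5+1=6$. Similarly, $A_{92}$ gives $6+1=7$. For $A_{93}$, $w$ has a neighbour in $I_{61}\cup I_{62}^{(1)}\cup I_{62}^{(2)}$, and these sets point into $A-A_{94}-A_{95}$ by Orientations \ref{cons8} and \ref{cons8.5} (including $I_{61}\rightarrow A_{93}$). Taking the worst bound from Claim \ref{cla9}, namely $g^*+2$, gives $g^*+3$. For $A_{94}$, I follow Orientation \ref{cons8.5}(ii). If $w$ is not isolated in $G[A\cup A']$, then $u\rightarrow w$, so $\partial(w,u)\le 2$ and $\partial(v,w)\le g^*$. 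If it is isolated, $w$ lies on a directed cycle of length at most $g^*$ through $u$, so $\theta(u,w)\le g^*-1$. Combined with $\partial(v,u)=g^*-1$, this gives $\partial(v,w)\le 2g^*-2$ and $\partial(w,u)\le g^*-1$.

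The main obstacle I anticipate is the $I'_1$ bound together with the $I_{61}^{(22)}$ and $I_{62}^{(2)}$ contributions to $A_{93}$. These rely on the exact geometry of the iteratively built mixed paths and cycles in Orientation \ref{cons8}. For each such cycle I must verify that the directed segment from a vertex of $\cup_{i=1}^3A_i$ to the relevant vertex is short enough. I also need the "by iteration" consistency: a later $B$-$C$ or $B$-$P$ orientation never reverses an edge used earlier. I would handle this by listing the possible cycles $w_0ww_1w_2w_3$ and $w_0wy_1y_2y_3(y'_4)$ and reading off the distances case by case.
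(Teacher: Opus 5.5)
Your plan follows the paper's proof essentially set by set. For the $u$-side you use the arcs $I'_1\cup I'_2\rightarrow (A-A_{94}-A_{95})\rightarrow u$. For $\partial(v,\cdot)$ on $I'_1$ and $I'_2$ you use in-neighbours from $I_{61}^{(2)}$ and from $I_{61}-I_{61}^{(22)}$. For $A_{91},A_{92},A_{93}$ you propagate through $I'_1$, $I'_2$ and $I_{61}\cup I_{62}^{(1)}\cup I_{62}^{(2)}$, with worst bound $g^*+2$ from Claim~\ref{cla9}. For $A_{94}$ you use the short directed cycle through $u$, and for $A_{95}$ Lemma~\ref{l1}.

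The one place where your proposal is not yet a proof is the second case you leave open for $I'_1$. Your guess there cannot be right: a vertex of $I'$ never occupies a $K'$ position, since $I'\subseteq S_{2,3}$ and $K'\subseteq S_{3,4}$. In fact that case is empty:
\begin{itemize}
\item Orientation~\ref{cons8}(i) only involves $\cup_{i=1}^3A_i\cup I_6$.
\item The paths and cycles built in Orientation~\ref{cons8}(ii) around an isolated vertex of $I_{61}^{(2)}$ have all their vertices in $A\cup I\cup K'\cup K$. The vertices $w_0,w_3,y'_4$ lie in $A$; $w_2,y_3$ lie in $I$; $w_1,y_1$ lie in $K'$; and $y_2$ lies in $K'\cup K$.
\item No earlier orientation touches $I'$ either.
\end{itemize}
Hence a vertex $x\in I'_1$ can only have entered $\rbjt{D_3}$ in the non-isolated subcase. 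There $\cup_{i=1}^3A_i\rightarrow w'\rightarrow x$ for some $w'\in N(x)\cap I_{61}^{(2)}$, which is exactly what the paper uses to get $\partial(v,x)\le 5$, so no path-by-path check is needed.

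Similarly, the $I_{61}^{(22)}$ and $I_{62}^{(2)}$ contributions to $A_{93}$ need no new analysis. Claim~\ref{cla9}(iii),(vi) already bounds $\partial(v,\cdot)$ on those sets by $g^*+2$.

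One small point to add: $\partial(x,u)\le 2$ on $I'_1\cup I'_2$ needs $N(x)\cap(A-A_{94}-A_{95})\neq\emptyset$. This holds because every $x\in I'$ has a neighbour in $A$, as $N(x)\subseteq S_{2,3}\cup A$ and $d(x,u)=2$. Any such neighbour lying in $A_9$ belongs, by definition, to $A_{91}$ if $x\in I'_1$ and to $A_{91}\cup A_{92}$ if $x\in I'_2$.
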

\begin{proof}
For $w\in I'_1$, there exists $w'\in N(w)\cap I_{61}^{(2)}$ such that $\cup_{i=1}^3 A_i\rightarrow w'\rightarrow w\rightarrow (A-A_{94}-A_{95})\rightarrow u$ by Orientations \ref{cons8} and \ref{cons8.5}, which implies the result holds by Claim \ref{cla1} and $N(w)\cap \left(A-A_{94}-A_{95}\right)\neq\emptyset$. 

For $w\in I'_2$, we have $(I_{61}-I_{61}^{(22)})\rightarrow w\rightarrow (A-A_{94}-A_{95})\rightarrow u$ by Orientation \ref{cons8.5}, which implies that the desired bounds hold by Claim \ref{cla9} and the fact $I_{61}-I_{61}^{(22)}=I_{61}^{(1)}\cup I_{61}^{(21)}\cup I_{61}^{(3)}$.

For $w\in A_{91}\cup A_{92}$, the result holds by the results of $w\in I'_1\cup I'_2$ and Orientation \ref{cons8.5}.

For $w\in A_{93}$, we have $I_{61}\cup I_{62}^{(1)}\cup I_{62}^{(2)}\rightarrow A_{93}\rightarrow u$ by Orientation \ref{cons8.5}, which implies $\partial(w,u)=1$ and $\partial(v,w)\leq g^*+3$ by Claim \ref{cla9}. 

For $w\in A_{95}$, by Lemma \ref{l1} and Orientation \ref{cons8.5}, we have $\theta(w,u)\leq 2$. Then $\partial(v,w)\leq \partial(v,u)+\partial(u,w)\leq g^*-1+2= g^*+1$ since $\partial(v,u)= g^*-1$ in Claim \ref{cla1}.		

For $w\in A_{94}$, by Orientation \ref{cons8.5}, if $w$ is not an isolated vertex in $G[A\cup A']$, then $\theta(u,w)\leq 2$; if $w$ is isolated in $G[A\cup A']$, then $uw$ is in a directed cycle of length at most $g^*$, and thus $\theta(u,w)\leq g^*-1$. Since $\partial(v,u)= g^*-1$, the desired bounds hold.		
\end{proof}

\vspace{0.3\baselineskip}
To orient some edges incident to $A'$, we define a partition for $A'$ as follows.
\begin{align*}
A'_1&=A'\cap V(\rbjt{D_4}),\  A'_2=\{w\mid w\in A'-A'_1\text{ and } w \text{ is isolated in } G[A'-A'_1]\},\\ A'_3&=\{w\mid w\in A'-A'_1\text{ and } w \text{ is not isolated in } G[A'-A'_1]\}.
\end{align*} 
\begin{Orientation}\label{cons10}
Let $R=\{u\}$ and $S=A'_3$. Orient the edges of $G[S]$ and $[R,S]$ by an $R$-$S$ orientation. Let an undirected vertex $w\in A'_2$. If $w$ is not an isolated vertex in $G[A']$, then $N(w)\cap A'_1\neq\emptyset$, and thus let $u\rightarrow w\rightarrow A'_1\rightarrow u$. Otherwise, $w$ is an isolated vertex in $G[A']$. If $|[w,u]|\geq 2$, then orient the edges in $[w,u]$ in two ways. If $|[w,u]|=1$, then $N(w)\cap A\neq \emptyset$ since $G$ is bridgeless. For some $w'\in N(w)\cap A$, we know that $uww'u$ is a mixed 3-cycle since $w$ is undirected, and then orient $uww'u$ by a $B$-$C$ orientation.

\end{Orientation}

\begin{claim}\label{cla11}
For $w\in A'$, we have $\partial(w,u)\leq 2$ and $\partial(v,w)\leq g^*+1$. 
\end{claim}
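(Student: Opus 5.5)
The plan is to show that $\theta(u,w)\le 2$ for every $w\in A'$. Then $\partial(w,u)\le 2$, and since $\partial(v,u)=g^*-1$ by Claim \ref{cla1}, we get $\partial(v,w)\le\partial(v,u)+\partial(u,w)\le g^*-1+2=g^*+1$. This mirrors the earlier claim for $B'$, with $u$ in place of $v$ and $A$, $A'$ in place of $B$, $B'$. We split $A'=A'_1\cup A'_2\cup A'_3$ and treat the three parts in turn.

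\emph{Case $w\in A'_3$.} Here $G[A'_3]$ has no isolated vertex. Since $w\in S_{1,2}$, $w$ is adjacent to $u$, so $N(w)\cap R\neq\emptyset$ for $R=\{u\}$. Orientation \ref{cons10} gives the $R$-$S$ orientation, and Lemma \ref{l1} yields $\theta(w,u)\le 2$ directly.

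\emph{Case $w\in A'_2$.} We follow Orientation \ref{cons10}. If $w$ is not isolated in $G[A']$, then $u\to w\to A'_1\to u$, so it suffices to know that every vertex of $A'_1$ reaches $u$ in one step; we verify this in the next case. If $w$ is isolated in $G[A']$ and $|[w,u]|\ge 2$, the two opposite edges give $\theta(u,w)=1$. Otherwise $uww'u$ is a directed $3$-cycle, so $\theta(u,w)\le 2$.

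\emph{Case $w\in A'_1$.} This is the case where care is needed: $w$ received its orientation in $\rbjt{D_4}$, that is, during Orientation \ref{cons8.5}(ii). The only way a vertex of $A'$ gets oriented there is as a neighbour of an $A_{94}$-vertex $w_0$ that is not isolated in $G[A\cup A']$. In that situation the orientation is $u\to w_0\to N(w_0)\cap(A\cup A'-A_{94}-A_{95})\to u$, so $w\to u$ and $u\to w_0\to w$. This gives $\theta(u,w)\le 2$.

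The main obstacle is to confirm that no other earlier orientation touches $A'$. Every vertex of $A'$ has all its neighbours in $S_{1,2}\cup\{u\}$, so it cannot lie on the cycles built in Orientations \ref{cons8} and \ref{cons8.5}: those cycles go through $I$, $I'$ or $K'$, and their vertices in $A$ are not in $A'$. We would also check that the same $w\in A'_1$ is not assigned inconsistent directions by two different $A_{94}$-vertices; each assignment is $w\to u$ together with some edge into $w$, so they are compatible. With these checks, the three cases together give the claim.
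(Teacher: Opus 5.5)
Your proof is correct and follows the paper's route: show $\theta(u,w)\le 2$ for every $w\in A'$ from Orientations \ref{cons8.5}, \ref{cons10} and Lemma \ref{l1}, then use $\partial(v,w)\le \partial(v,u)+\partial(u,w)$ with $\partial(v,u)=g^*-1$. The paper states this in one sentence; you spell out the case split into $A'_1,A'_2,A'_3$, including the check that only Orientation \ref{cons8.5}(ii) touches $A'$, which gives $u\to w_0\to w\to u$ for $w\in A'_1$.
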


\begin{proof}
By Orientations \ref{cons8.5}, \ref{cons10} and Lemma \ref{l1}, we have $\theta(u,w)\leq 2$ for any $w\in A'$. Since $\partial(v,u)=g^*-1$ in Claim \ref{cla1}, we obtain the result. 
\end{proof}

Recall that $I'_1=I'\cap V(\rbjt{D_3})$ and $I'_2=(I'-I'_1)\cap N(I_{61}-I_{61}^{(22)})$. To orient the edges of $G[I']$ and $[I',I\cup A]$, we partition $I'-(I'_1\cup I'_2)$ as follows.
\begin{align*}
I'_3&=(I'-I'_1-I'_2)\cap V(\rbjt{D_4}),\quad
I'_4=(I'-\cup_{i=1}^{3}I'_i)\cap N(I-I_{62}^{(3)}),\\ I'_5&=(I'-\cup_{i=1}^{4}I'_i)\cap N(I_{62}^{(3)}),\quad
I'_6=\{w\mid w\in I'-\cup_{i=1}^{5}I'_i,\ w\text{ is isolated in }G[I'-\cup_{i=1}^{5}I'_i]\},\\
I'_7&=(I'-\cup_{i=1}^{6}I'_i)\cap N(\cup_{i=1}^4 A_i),\quad I'_{8}=(I'-\cup_{i=1}^{7}I'_i)\cap N(\cup_{i=5}^{9} A_i).
\end{align*}
It follows that $I'_i$ ($i=1,2,\dots,8$) form a partition of $I'$. We further partition $I'_4$, $I'_5$, $I'_6$, $I'_7$, $I'_8$ and $I_{62}^{(3)}$ as follows.
\begin{align*}
I'_{41}&=I'_4\cap N(\cup_{i=1}^6 A_i),\quad I'_{42}=(I'_4-I'_{41})\cap N(\cup_{i=7}^{9} A_i);\\
I'_{51}&=I'_5\cap N(\cup_{i=1}^6 A_i),\quad I'_{52}=(I'_5-I'_{51})\cap N(\cup_{i=7}^{9} A_i);\\
I'_{61}&=I'_6\cap N(\cup_{i=1}^6 A_i),\quad I'_{62}=(I'_6-I'_{61})\cap N(\cup_{i=7}^{9} A_i);\\
I'_{71}&=I'_7\cap N(I'_{8}),\quad I'_{72}=\{w\mid w\in I'_7-I'_{71}\text{ and }w\text{ is isolated in }G[I'_7-I'_{71}]\},\\
I'_{73}&=\{w\mid w\in I'_7-I'_{71}\text{ and }w\text{ is not isolated in }G[I'_7-I'_{71}]\};\\
I'_{81}&=I'_{8}\cap N(I'_{7}),\quad I'_{82}=\{w\mid w\in I'_{8}-I'_{81}\text{ and }w\text{ is isolated in }G[I'_{8}-I'_{81}]\},\\
I'_{83}&=\{w\mid w\in I'_{8}-I'_{81}\text{ and }w\text{ is not isolated in }G[I'_{8}-I'_{81}]\};\\
I_{62}^{(31)}&=I_{62}^{(3)}\cap V(\rbjt{D_4}),\quad  I_{62}^{(32)}=(I_{62}^{(3)}-I_{62}^{(31)})\cap N((\cup_{i=1}^3 I'_i)\cup I'_{41}\cup I'_{51}\cup (I-I_6)),\\
I_{62}^{(33)}&=I_{62}^{(3)}-I_{62}^{(31)}-I_{62}^{(32)}.
\end{align*}


\begin{Orientation}\label{cons12}
{\noindent\rm (i)} Let $\cup_{i=1}^6 A_i\rightarrow I'_{41}\cup I'_{51}\rightarrow I$, $\cup_{i=7}^{9} A_i\rightarrow I'_{52}\rightarrow I$, $(I-I_{62}^{(3)})\rightarrow I'_{42}\rightarrow A\cup I_{62}^{(3)}$, $\cup_{i=1}^4 A_i\rightarrow I'_{71}\rightarrow I'_{81}\rightarrow A$, $I'_{71}\rightarrow I'_{72}\rightarrow (\cup_{i=1}^4 A_i)$, $\cup_{i=5}^{9} A_i\rightarrow I'_{82}\rightarrow I'_{81}$, $(\cup_{i=1}^3 I'_i)\cup (I-I_6)\rightarrow (I_{62}^{(32)}\cup I_{62}^{(33)})\rightarrow A$, $(\cup_{i=1}^4 I'_i)\cup I'_{51}\cup I'_{61}\cup  I'_7\rightarrow I'_{52}\cup I'_{62}\cup I'_8$. 

{\noindent\rm (ii)} Let $R=\cup_{i=1}^4 A_i$, $S=I'_{73}$, or $R=\cup_{i=5}^{9} A_i$, $S=I'_{83}$. Then orient the edges of $G[S]$ and $[R,S]$ by an $R$-$S$ orientation.

{\noindent\rm (iii)} For $w\in I'_6$, if $w$ is not isolated in $G[I']$, then $N(w)\cap  (\cup_{i=1}^5 I'_i)\neq\emptyset$, and let $\cup_{i=1}^6 A_i\rightarrow w\rightarrow \cup_{i=1}^5 I'_i$ if $w\in I'_{61}$, $(\cup_{i=1}^4 I'_i)\cup I'_{51}\rightarrow w\rightarrow A$ if $w\in I'_{62}$ and $N(w)\cap I'_{52}=\emptyset$, and $\cup_{i=7}^{9} A_i\rightarrow w\rightarrow  I'_{52}$ if $w\in I'_{62}$ and $N(w)\cap I'_{52}\neq\emptyset$; if $w$ is isolated in $G[I']$, then $|[w,A]|\geq 2$. Thus orient $w'w$ as $\rbjt{w'w}$ for some $w'\in N(w)\cap (\cup_{i=1}^6 A_i)$ and the other edges of $[w,A]$ away from $w$ if $w\in I'_{61}$, and orient the edges of $[w,A]$ in two ways if $w\in I'_{62}$. 
\end{Orientation}

\begin{claim}\label{cla13_1}
For $w\in I_{62}^{(3)}$, we have $\partial(w,u)\leq \begin{cases}
g^*-2, &\text{ if } w\in I_{62}^{(31)},\\
g^*, &\text{ if } w\in I_{62}^{(32)}\cup I_{62}^{(33)}.
\end{cases}$
\end{claim}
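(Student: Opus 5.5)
We split by the three subclasses and, in each, trace a directed path from $w$ to some vertex of $A-A_{94}-A_{95}$. Such a vertex reaches $u$ in one step by Orientation \ref{cons8.5}(i). Note that every such path runs inside $I\cup I'\cup A$, so we only need to control its length.

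\emph{Case $w\in I_{62}^{(32)}\cup I_{62}^{(33)}$.} By Orientation \ref{cons12}(i) we have $(I_{62}^{(32)}\cup I_{62}^{(33)})\rightarrow A$, so it suffices to check $N(w)\cap A\neq\emptyset$ and bound the resulting path by $g^*$. Since $w\in I_6\subseteq I$, the vertex $w$ lies in $S_{2,3}$ and so has a neighbour in $S_{1,2}=A\cup A'$. The definition of $A'$ forbids neighbours in $S_{2,3}$, hence $N(w)\cap A\neq\emptyset$, and $w\rightarrow a$ for some $a\in A$.

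If $a\in A-A_{94}-A_{95}$, then $\partial(w,u)\le 2\le g^*$. Otherwise $a\in A_{94}\cup A_{95}$, and the tables give $\partial(a,u)\le g^*-1$ (Claim \ref{cla9.5}), so $\partial(w,u)\le g^*$. This covers both subsets at once, and we only need the weaker bound $g^*$ here.

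\emph{Case $w\in I_{62}^{(31)}=I_{62}^{(3)}\cap V(\rbjt{D_4})$.} Here $w$ was oriented only during Orientation \ref{cons8.5}(ii). It lies on the cycle $C_1$ (as $w_1$ if $r=4$, or as $w^*$ if $r=5$) or on the derived cycle $C_3$ built around some $w'\in A_{94}$. That cycle is directed, has length $r\le g^*$, and passes through $u$ and through $w_2\in A$ adjacent to $u$.

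Going forward along the cycle from $w$: if the edge into $u$ comes from $w_2$, the path is $w\rightarrow\cdots\rightarrow w_2\rightarrow u$. In the $r=4$ case $w=w_1$, and this gives $\partial(w,u)\le 2$. In the $r=5$ case, with $w=w^*$, we get $\partial(w,u)\le 2$ as well, and with $w=w_1$ we get $\le 3$. If the cycle is oriented the other way, $u\rightarrow w_2\rightarrow\cdots\rightarrow w\rightarrow\cdots\rightarrow w'\rightarrow u$, and $w$ reaches $u$ through $w'$ in at most $r-2\le g^*-2$ steps.

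The same reasoning applies when $w$ lies on the substitute cycle $C_3$ instead of $C_1$. Since $|C_3|=|C_1|$ and $u$ is adjacent to $w$'s cycle neighbours only through $A$-vertices, in all sub-cases $\partial(w,u)\le r-2\le g^*-2$.

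\emph{Main obstacle.} The hardest step is the careful position analysis for $I_{62}^{(31)}$. In a directed $r$-cycle through $u$, no vertex of $I$ is adjacent to $u$, so $w$ sits at distance at least $2$ from $u$ on both sides. Hence the forward distance from $w$ to $u$ is at most $r-2$. Writing this out for the cases $r=4,5$ and $C_1$ versus $C_3$, and checking that the orientations of Orientation \ref{cons8.5}(ii) are not overwritten later by Orientation \ref{cons12}, completes the proof.
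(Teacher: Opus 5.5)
Your proof is correct and follows the paper's own argument. For $I_{62}^{(32)}\cup I_{62}^{(33)}$ you use the orientation toward $A$ from Orientation~\ref{cons12} together with $\partial(a,u)\le g^*-1$ for every $a\in A$. For $I_{62}^{(31)}$ you use the directed cycle of length at most $g^*$ through $u$ and $w$ produced by Orientation~\ref{cons8.5}; since $w\in I$ is not adjacent to $u$, this gives $\partial(w,u)\le g^*-2$. Your explicit position analysis on $C_1$ and $C_3$ just spells out the paper's one-line argument.
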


\begin{proof}
If $w\in I_{62}^{(31)}$, then by the definitions of $I_{62}^{(2)}$ and $I_{62}^{(31)}$, there exists a directed cycle of length at most $g^*$ containing $u$ and $w$ by Orientation \ref{cons8.5}, which implies $\partial(w,u)\leq g^*-2$. If $w\in I_{62}^{(32)}\cup I_{62}^{(33)}$, then $w\rightarrow A$ and $\partial(w',u)\leq g^*-1$ for any $w'\in A$ by Orientation \ref{cons12}, Claims \ref{cla1}, \ref{cla7} and \ref{cla9.5}, which implies $\partial(w,u)\leq g^*$.			
\end{proof}

For the convenience of subsequent arguments, we summarize prior results on $A$ and $I$ based on Claims \ref{cla1}, \ref{cla7}, \ref{cla9}, \ref{cla9.5} and \ref{cla13_1}.

\begin{proposition}\label{pro3}
{\rm (i)} For $w\in \cup_{i=1}^4 A_i$, we have $\partial(w,u)=1$ and $\partial(v,w)\leq 4$. \\
{\rm (ii)} For $w\in A_5\cup A_6$, we have $\partial(w,u)=1$ and $\partial(v,w)\leq 5$.\\
{\rm (iii)} For $w\in \cup_{i=7}^{9} A_i$, we have $\partial(w,u)\leq g^*-1$ and $\partial(v,w)\leq g^*+3$.
\end{proposition}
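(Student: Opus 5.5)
Proposition \ref{pro3} is a summary of bounds already proved, so the plan is to read off each bound from the earlier claims and, where a set has been subdivided, take the maximum over its parts. I would go through $A_1,\dots,A_9$ one at a time and record, for each, the pair $(\partial(w,u),\partial(v,w))$ given by Claims \ref{cla1}, \ref{cla7} and \ref{cla9.5}.

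For (i), Claim \ref{cla1} (Table \ref{Table1}) gives $\partial(w,u)\le 1$ for $w\in A_1\cup A_2\cup A_3\cup A_4$. The corresponding bounds on $\partial(v,w)$ are $2,3,3,4$, so their maximum is $4$. Since $w\neq u$, the inequality $\partial(w,u)\le 1$ is in fact an equality. These bounds rely on the arcs $\cup_{i=1}^8 A_i\rightarrow u$ fixed in Orientation \ref{cons1}. I would check that the later Orientations \ref{cons8}--\ref{cons12} never reorient an edge of $[A-A_9,u]$ or an arc used in those paths. Orientation \ref{cons8} explicitly keeps $I\rightarrow\cup_{i=4}^9A_i$, and Orientation \ref{cons8.5} reinforces $(A-A_{94}-A_{95})\rightarrow u$, so this should hold.

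For (ii), Claim \ref{cla7} (Table \ref{Table3}) gives $(1,5)$ for $A_5$, and Table \ref{Table1} gives $(1,5)$ for $A_6$.

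For (iii), Claim \ref{cla7} gives $\partial(w,u)\le 1$ with $\partial(v,w)\le 6$ for $A_7$ and $\partial(v,w)\le 7$ for $A_8$. For $A_9=\cup_{i=1}^5A_{9i}$, Table \ref{Table4} gives $\partial(w,u)\le \max\{1,1,1,g^*-1,2\}=g^*-1$, using $g^*\ge 4$ so that $g^*-1\ge 3$. For the other direction it gives $\partial(v,w)\le\max\{6,7,g^*+3,2g^*-2,g^*+1\}$.

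The \textbf{main obstacle} is the term $2g^*-2$ coming from $A_{94}$. For $g^*=5$ it equals $8=g^*+3$, which is fine. For $g^*=4$ it equals $6\le 7=g^*+3$, also fine. The constant bounds $6$ and $7$ from $A_7,A_8,A_{91},A_{92}$ are at most $g^*+3$ because $g^*\ge 4$. So the maximum is $g^*+3$ in both cases $g^*\in\{4,5\}$, which is exactly where the hypothesis $g^*\in\{4,5\}$ enters. Once these maxima are confirmed, (iii) follows and the proof is complete.
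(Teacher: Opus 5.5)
Your proposal is correct and takes the same approach as the paper. The paper gives no separate proof of this proposition: it simply collects the bounds for $A_1,\dots,A_9$ from Claims~\ref{cla1}, \ref{cla7} and \ref{cla9.5} and takes maxima over the parts, exactly as you do, with $2g^*-2\le g^*+3$ for $A_{94}$ being where $g^*\le 5$ is used.
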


\begin{proposition}\label{pro1.5}
{\rm (i)} For $w\in I-I_6$, $\partial(w,u)\leq 2$ and $\partial(v,w)\leq 6$.\\
{\rm (ii)} For $w\in I_{61}$, $\partial(w,u)\leq g^*$ and $\partial(v,w)\leq g^*+2$. 
\\ {\rm (iii)} For $w\in I_{62}^{(1)}\cup I_{62}^{(2)}$, $\partial(w,u)\leq 2$ and $\partial(v,w)\leq g^*+2$. \\{\rm (iv)} For $w\in I_{62}^{(3)}$, $\partial(w,u)\leq g^*$. 
\end{proposition}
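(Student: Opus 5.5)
The proposition only collects bounds already established, so I plan to prove it by reading off the relevant entries from Claims \ref{cla1}, \ref{cla7}, \ref{cla9} and \ref{cla13_1} and taking maxima over the parts of each partition. One preliminary remark: every orientation assigned in Orientations \ref{cons1}--\ref{cons12} is never reversed afterwards, and later orientations only direct edges that were still undirected. So every directed path used in an earlier claim survives in the final orientation, and each earlier bound on $\partial$ remains valid. Throughout I will use $g^*\in\{4,5\}$, in particular $g^*\ge 4$.

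For (i), I recall that $I-I_6=\bigcup_{i=1}^5 I_i$. Claim \ref{cla1} gives $(\partial(w,u),\partial(v,w))\le(2,3)$ on $I_1$ and $\le(2,4)$ on $I_3$. Claim \ref{cla7} gives $\le(2,4)$ on $I_2$, $\le(2,5)$ on $I_4$ and $\le(2,6)$ on $I_5$. Taking maxima yields $\partial(w,u)\le 2$ and $\partial(v,w)\le 6$.

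For (ii), I use the partition $I_{61}=I_{61}^{(1)}\cup I_{61}^{(21)}\cup I_{61}^{(22)}\cup I_{61}^{(3)}$ together with Claim \ref{cla9} (i)--(iv). The bounds on $\partial(w,u)$ are $3,\,g^*,\,2,\,3$, and their maximum is $g^*$ because $g^*\ge 4$. The bounds on $\partial(v,w)$ are $4,\,4,\,g^*+2,\,5$, and their maximum is $g^*+2$ because $g^*+2\ge 6$.

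For (iii), Claim \ref{cla9} (v)--(vi) gives $\partial(w,u)\le 2$ in both parts, and $\partial(v,w)\le\max\{5,g^*+2\}=g^*+2$. For (iv), Claim \ref{cla13_1} gives $\partial(w,u)\le g^*-2$ on $I_{62}^{(31)}$ and $\partial(w,u)\le g^*$ on $I_{62}^{(32)}\cup I_{62}^{(33)}$, hence $\partial(w,u)\le g^*$ on all of $I_{62}^{(3)}$.

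There is no genuine obstacle here. The only points needing care are confirming that the listed subsets really partition $I-I_6$, $I_{61}$ and $I_{62}^{(1)}\cup I_{62}^{(2)}$, and that the inequality $g^*\ge4$ is what makes $g^*$ and $g^*+2$ dominate the constant bounds.
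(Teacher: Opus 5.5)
Your proposal is correct and takes the same approach as the paper. The paper gives no separate argument for this proposition; it presents it simply as a summary of Claims~\ref{cla1}, \ref{cla7}, \ref{cla9} and \ref{cla13_1}, and your part-by-part maxima, with $g^*\ge 4$ making $g^*$ and $g^*+2$ dominate the constant bounds, are exactly that summary. Your remark that earlier orientations are never reversed is the same implicit convention the paper relies on throughout.
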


For $w\in I'_1\cup I'_2$, by Claim \ref{cla9.5}, we have $\partial(w,u)\leq 2$, with $\partial(v,w)\leq 5$ if $w\in I'_1$ and $\partial(v,w)\leq 6$ if $w\in I'_2$. Next, we give some directed distances with respect to $w\in \cup_{i=3}^{8}I'_i$.

\begin{claim}\label{cla14}
Table \ref{Table7} presents upper bounds on directed distances for $\cup_{i=3}^{8} I_i'$.
\begin{table}[htbp]
\centering
\setlength{\tabcolsep}{4pt}
\renewcommand{\arraystretch}{1.1}
$
\begin{array}{|c|c|c|c|c|c|c|c|c|c|}
\hline
\text{for } w \text{ in} & I'_3 & I'_{41}\cup I'_{51}\cup I'_7& I'_{42} &I'_{52}&  I'_{61}&I'_{62}  & I'_{8} \\
\hline
\partial(w,u) \le & g^*-2 & g^*+1&g^*&g^*+1&g^*+2 & g^*+2  & g^*+1 \\
\hline
\partial(v,w) \le & 2g^*-3 & 6& g^*+3&g^*+4&6 & g^*+4 & g^*+5 \\
\hline
\end{array}
$\vspace{-5pt}
\caption{Upper bounds on directed distances for $w \in \cup_{i=3}^{8}I'_i$.}
\label{Table7}
\vspace{-10pt}
\end{table}
\end{claim}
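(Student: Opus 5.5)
The plan is to go through the parts of $I'$ in the order they were defined. For each part I will name the arc(s) leaving $w$ and entering $w$ that Orientation \ref{cons12} (or an earlier orientation) fixes. Then I will combine them with the bounds already known for $A$ and $I$: Propositions \ref{pro3} and \ref{pro1.5}, Claims \ref{cla9}, \ref{cla9.5}, \ref{cla13_1}, and $\partial(v,u)=g^*-1$. Every bound should then come from an inequality $\partial(w,u)\le 1+\partial(w',u)$ or $\partial(v,w)\le\partial(v,w')+1$, where $w'$ is a suitable neighbour. Longer chains are needed only for parts whose arcs point into other parts of $I'$.

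\textbf{$I'_3$.} Since $I'_3\subseteq V(\rbjt{D_4})$ and $I'_3\cap(I'_1\cup I'_2)=\emptyset$, $w$ was oriented in Orientation \ref{cons8.5}(ii) as part of a directed cycle through $u$ of length at most $g^*$. Such a cycle gives $\theta(u,w)\le g^*-2$. Hence $\partial(w,u)\le g^*-2$ and $\partial(v,w)\le g^*-1+g^*-2=2g^*-3$.

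\textbf{$I'_{41}$, $I'_{51}$.} Here $\cup_{i=1}^6A_i\to w\to I$, so $\partial(v,w)\le 5+1=6$ by Proposition \ref{pro3}. On the exit side, if $w$ has a neighbour in $I-I_{62}^{(3)}$, Proposition \ref{pro1.5} gives $\partial(w,u)\le 1+g^*$. If instead $w\in I'_5$, its neighbour is in $I_{62}^{(3)}$ and Claim \ref{cla13_1} gives the same bound $1+g^*$.

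\textbf{$I'_{42}$, $I'_{52}$.} For $I'_{42}$ we have $(I-I_{62}^{(3)})\to w\to A$. This gives $\partial(w,u)\le 1+(g^*-1)=g^*$ and $\partial(v,w)\le (g^*+2)+1$. For $I'_{52}$ we have $\cup_{i=7}^9A_i\to w\to I_{62}^{(3)}$. This gives $\partial(v,w)\le g^*+3+1$ and $\partial(w,u)\le 1+g^*$.

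\textbf{$I'_6$.} I will use Orientation \ref{cons12}(iii) and split into cases.
\begin{itemize}
\item $w\in I'_{61}$ not isolated in $G[I']$: entry from $\cup_{i=1}^6A_i$ gives $\partial(v,w)\le 6$. The exit goes into $\cup_{i=1}^5I'_i$, whose $\partial(\cdot,u)$ is at most $g^*+1$, so $\partial(w,u)\le g^*+2$.
\item $w\in I'_{62}$ not isolated in $G[I']$ and $N(w)\cap I'_{52}=\emptyset$: entry from $\cup_{i=1}^4I'_i\cup I'_{51}$ gives $\partial(v,w)\le g^*+4$, using $I'_{42}$ as the worst case ($g^*+3$). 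The exit into $A$ gives $\partial(w,u)\le g^*$.
\item $w\in I'_{62}$ with $N(w)\cap I'_{52}\neq\emptyset$: entry from $\cup_{i=7}^9A_i$ gives $\partial(v,w)\le g^*+4$. The exit goes to $I'_{52}$, giving $\partial(w,u)\le g^*+2$.
\item $w$ isolated in $G[I']$: then $|[w,A]|\ge2$ and both directions are routed through $A$, which gives bounds within the table.
\end{itemize}

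\textbf{$I'_7$, $I'_8$.} Here the work is tracking chains inside these two classes.
\begin{itemize}
\item $I'_{71}$: we have $\cup_{i=1}^4A_i\to w\to I'_{81}\to A$, so $\partial(v,w)\le5$ and $\partial(w,u)\le 2+(g^*-1)=g^*+1$.
\item $I'_{72}$: entry $I'_{71}\to w$ gives $\partial(v,w)\le6$, and the exit to $\cup_{i=1}^4A_i$ gives $\partial(w,u)\le2$.
\item $I'_{73}$: Lemma \ref{l1} with $R=\cup_{i=1}^4A_i$ gives $\partial(v,w)\le 4+2$ and $\partial(w,u)\le 2+1$.
\item $I'_{81}$: entry from $I'_{71}$ gives $\partial(v,w)\le6$, and the exit to $A$ gives $\partial(w,u)\le g^*$.
\item $I'_{82}$: $\cup_{i=5}^9A_i\to w\to I'_{81}$ gives $\partial(v,w)\le g^*+4$ and $\partial(w,u)\le g^*+1$.
\item $I'_{83}$: Lemma \ref{l1} with $R=\cup_{i=5}^9A_i$ gives $\partial(v,w)\le g^*+3+2$ and $\partial(w,u)\le 2+g^*-1$.
\end{itemize}

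\textbf{Main obstacle.} The hardest step is justifying the neighbourhood facts that make these arcs exist. Two are needed: every $w\in I'_{82}$ has a neighbour in $I'_{81}$, and every $w\in I'_{62}$ that is not isolated has a neighbour in the named classes. These follow from the layered definitions: vertices are isolated after earlier classes are removed, $w\in I'$ has $N(w)\subseteq S_{2,3}\cup A$, and the bridgeless condition applies. A second point to check is consistency of the orientation: no arc of Orientation \ref{cons12} may be assigned two directions. This needs the observation that arcs leaving $I'_{61}$ toward $I'_{52}$ are compatible with the rule $(\cup_{i=1}^4I'_i)\cup I'_{51}\cup I'_{61}\cup I'_7\to I'_{52}\cup I'_{62}\cup I'_8$.
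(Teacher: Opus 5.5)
Your proposal is correct and follows essentially the same route as the paper. Both go class by class through $I'_3,\dots,I'_8$. For $I'_3$ you use the short directed cycle through $u$ from Orientation~\ref{cons8.5}; for the other classes you use the arcs fixed in Orientation~\ref{cons12}, combined with Propositions~\ref{pro3} and \ref{pro1.5}, Claim~\ref{cla9.5} and Lemma~\ref{l1}. Your bounds for $I'_{71}$ and $I'_{81}$ are slightly sharper than needed, and the orientation-consistency remark at the end is not required for the claim.
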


\begin{proof}
For $w \in I'_3$, we have $w\in V(\rbjt{D_4})$, and thus there is a directed cycle of length at most $g^*$ containing $u$ and $w$ by Orientation \ref{cons8.5}, which implies $\theta(u,w) \leq g^* - 2$. Therefore, the desired bounds hold since $\partial(v,u) = g^* - 1$ in Claim \ref{cla1}.

For $w\in I'_4\cup I'_5$, by the definitions of $I'_{41},I'_{42}, I'_{51}$ and $I'_{52}$, we have $N(w)\cap (I-I_{62}^{(3)})\neq\emptyset$ if $w\in I'_4$, $N(w)\cap I_{62}^{(3)}\neq\emptyset$ if $w\in I'_5$, $N(w)\cap (\cup_{i=1}^6 A_i)\neq\emptyset$ if $w\in I'_{41}\cup I'_{51}$, and $N(w)\cap (\cup_{i=7}^{9} A_i)\neq\emptyset$ if $w\in I'_{42}\cup I'_{52}$. By Orientation \ref{cons12}, we have $ \cup_{i=1}^6 A_i\rightarrow I'_{41}\cup I'_{51}\rightarrow I$, $(I-I_{62}^{(3)})\rightarrow I'_{42}\rightarrow A$, and $\cup_{i=7}^{9} A_i\rightarrow I'_{52}\rightarrow I$, which implies the desired result holds by Propositions \ref{pro3} and \ref{pro1.5}.

For $w\in  I'_6$, by the definition of $I'_{61}$ and $I'_{62}$, we have $N(w)\cap (\cup_{i=1}^6 A_i)\neq\emptyset$ if $w\in I'_{61}$, and $N(w)\cap (\cup_{i=7}^{9}A_i)\neq\emptyset$ if $w\in I'_{62}$. By Orientation \ref{cons12}, if $w$ is not isolated in $G[I']$, then $N(w)\cap  (\cup_{i=1}^5 I'_i)\neq\emptyset$,  $\cup_{i=1}^6 A_i\rightarrow w\rightarrow \cup_{i=1}^5 I'_i$ if $w\in I'_{61}$, and $(\cup_{i=1}^4 I'_i)\cup I'_{51}\rightarrow w\rightarrow A$ or $\cup_{i=7}^{9} A_i\rightarrow w\rightarrow  I'_{52}$ if $w\in I'_{62}$, and thus the desired bounds hold by Claim \ref{cla9.5} and Proposition \ref{pro3}; if $w$ is isolated in $G[I']$, then $|[w,A]|\geq 2$, and thus $w'\rightarrow w\rightarrow w''$ for some $w'\in N(w)\cap (\cup_{i=1}^6 A_i)$ and $w''\in N(w)\cap A$ if $w\in I'_{61}$, and for some $w',w''\in N(w)\cap A$ if $w\in I'_{62}$, which implies the desired bounds hold by Proposition \ref{pro3}. 

For $w\in I'_7\cup I'_8$, we have $N(w)\cap I'_{71}\neq\emptyset$ for $w\in I'_{81}$, $N(w)\cap I'_{81}\neq\emptyset$ for $w\in I'_{71}$, $N(w)\cap (\cup_{i=1}^4 A_i)\neq\emptyset$ for $w\in I'_7$, and $N(w)\cap (\cup_{i=5}^{9} A_i)\neq\emptyset$ for $w\in I'_{8}$. By the definitions of $I'_6,I'_{71}$ and $I'_{81}$, we have $N(w)\cap (I'_7\cup I'_{8})\neq\emptyset$ if $w\in I'_{72}\cup I'_{82}$, which implies $N(w)\cap I'_{71}\neq\emptyset$ if $w\in I'_{72}$, and $N(w)\cap I'_{81}\neq\emptyset$ if $w\in I'_{82}$. By Orientation \ref{cons12}, we have $\cup_{i=1}^4 A_i\rightarrow I'_{71}\rightarrow I'_{81}\rightarrow A$, $I'_{71}\rightarrow I'_{72}\rightarrow \cup_{i=1}^4 A_i$ and $\cup_{i=5}^{9} A_i\rightarrow I'_{82}\rightarrow I'_{81}$, which implies $\partial(w,u)\leq g^*+1,\partial(v,w)\leq 6$ if $w\in I'_{71}\cup I'_{72}$, and $\partial(w,u)\leq g^*+1,\partial(v,w)\leq g^*+4 $ if $w\in I'_{81}\cup I'_{82}$ by Proposition \ref{pro3}. By Orientation \ref{cons12}, Proposition \ref{pro3} and Lemma \ref{l1}, we obtain the result of $w\in I'_{73}\cup I'_{83}$.
\end{proof}

\begin{claim}\label{cla15}
For $w\in I_{62}^{(31)}\cup I_{62}^{(32)} $, we have $\partial(v,w)\leq \begin{cases}
2g^*-3, &\text{ if } w\in I_{62}^{(31)},\\
g^*+3, &\text{ if } w\in I_{62}^{(32)}.
\end{cases}$
\end{claim}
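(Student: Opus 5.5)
The two cases in the statement need different arguments. For $w\in I_{62}^{(31)}$ we use the directed cycle through $u$ and $w$ that places $w$ in $\rbjt{D_4}$. For $w\in I_{62}^{(32)}$ we look for an in-neighbour of $w$, fixed by Orientation \ref{cons12}, whose distance from $v$ is already controlled.

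\emph{Case $w\in I_{62}^{(31)}=I_{62}^{(3)}\cap V(\rbjt{D_4})$.}
\begin{itemize}
\item $w$ is not in $V(\rbjt{D_3})$, since otherwise it would lie in $I_{62}^{(2)}$. So its edges were oriented in Orientation \ref{cons8.5}(ii).
\item There, $w$ was oriented as the vertex $w_1$ (case $r=4$) or $w^*$ (case $r=5$) of a cycle $C_1$, or as a vertex of the cycle $C_2$ or $C_3$ built from it.
\item Each of these cycles contains $u$, receives a $B$-$C$ orientation, and has length at most $g^*$. This is the same situation already used in Claim \ref{cla13_1}.
\item So there is a directed cycle of length at most $g^*$ through $u$ and $w$. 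Along it, $\partial(u,w)\le g^*-1$.
\item The proof of Claim \ref{cla13_1} gets $\partial(w,u)\le g^*-2$, hence $\theta(u,w)\le g^*-2$, because $w$ is not adjacent to $u$ on the cycle: $w$ sits in position two or three of a cycle through $u$. The same position argument should give $\partial(u,w)\le g^*-2$ as well.
\item Combining with $\partial(v,u)=g^*-1$ from Claim \ref{cla1} gives $\partial(v,w)\le \partial(v,u)+\partial(u,w)\le 2g^*-3$.

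The only delicate point is checking that $w$ is never the vertex immediately preceding $u$ on this cycle.
\end{itemize}

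\emph{Case $w\in I_{62}^{(32)}$.} By definition, $w$ has a neighbour $x\in(\cup_{i=1}^3 I'_i)\cup I'_{41}\cup I'_{51}\cup(I-I_6)$. We split according to where $x$ lies.
\begin{itemize}
\item \emph{$x\in(\cup_{i=1}^3 I'_i)\cup(I-I_6)$.} Orientation \ref{cons12}(i) gives $x\to w$, since $w\notin V(\rbjt{D_4})$ and this edge was undirected until then. We then need $\partial(v,x)\le g^*+2$:
  \begin{itemize}
  \item for $x\in I-I_6$, Proposition \ref{pro1.5}(i) gives $\partial(v,x)\le 6\le g^*+2$;
  \item for $x\in I'_1$ or $I'_2$, Claim \ref{cla9.5} gives bounds $5$ and $6$, both at most $g^*+2$.
  \end{itemize}
\item \emph{$x\in I'_{41}\cup I'_{51}$.} Orientation \ref{cons12}(i) gives $x\to I$, so again $x\to w$. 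Claim \ref{cla14} gives $\partial(v,x)\le 6\le g^*+2$.
\end{itemize}
In each of these subcases, $\partial(v,w)\le \partial(v,x)+1\le g^*+3$.

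\emph{Main obstacle: $x\in I'_3$.} Claim \ref{cla14} only gives $\partial(v,x)\le 2g^*-3$, which equals $g^*+2$ when $g^*=5$ but is still at most $g^*+2$ for $g^*=4$, so it would in fact suffice. For safety I would verify this directly from the directed cycle through $u$ and $x$ in $\rbjt{D_4}$, to confirm $\partial(v,x)\le g^*+2$, which is all the bound $g^*+3$ requires. I would also check that the orientation $(\cup_{i=1}^3 I'_i)\to I_{62}^{(32)}$ does not conflict with edges already oriented in $\rbjt{D_4}$. It does not, because $w\notin V(\rbjt{D_4})$, so no edge at $w$ was oriented before Orientation \ref{cons12}.
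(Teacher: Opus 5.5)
Your proposal is correct and follows the paper's proof. For $I_{62}^{(31)}$ you use a directed cycle of length at most $g^*$ through $u$ and $w$ in $\rbjt{D_4}$ together with $\partial(v,u)=g^*-1$. For $I_{62}^{(32)}$ you use an in-neighbour from $(\cup_{i=1}^3 I'_i)\cup I'_{41}\cup I'_{51}\cup(I-I_6)$, oriented by Orientation \ref{cons12}, and bound its distance from $v$ by Proposition \ref{pro1.5} and Claims \ref{cla9.5} and \ref{cla14}.

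The ``delicate point'' you flag is immediate: $w\in I\subseteq S_{2,3}$ is not adjacent to $u$, so it lies at cycle-distance at least $2$ from $u$ in both directions. Also, your observation that $2g^*-3\le g^*+2$ for $g^*\in\{4,5\}$ already settles the $I'_3$ subcase, so no further check is needed there.
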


\begin{proof}
If $w\in I_{62}^{(31)}$, then $w\in V(\rbjt{D_4})$, and thus there exists a directed cycle of length at most $g^*$ containing $u,w$ by Orientation \ref{cons8.5}. Hence we have $\partial(u,w)\leq g^*-2$, which implies $\partial(v,w)\leq 2g^*-3$ by $\partial(v,u)=g^*-1$ in Claim \ref{cla1}.

If $w\in I_{62}^{(32)}$, then by the definition of $I_{62}^{(32)}$ and Orientation \ref{cons12}, $N(w)\cap((\cup_{i=1}^3 I'_i)\cup I'_{41}\cup I'_{51}\cup (I-I_6))\neq\emptyset$ and $(\cup_{i=1}^3 I'_i)\cup I'_{41}\cup I'_{51}\cup (I-I_6)\rightarrow w$, which implies $\partial(v,w)\leq g^*+3$ by Proposition \ref{pro1.5}, Claims \ref{cla9.5} and \ref{cla14}.
\end{proof}

To orient the edges of $[K',I\cup K]$, we partition $K'$ as follows.
\begin{align*}
K'_1&=K'\cap V(\rbjt{D_3}),\quad K'_2=\{w\mid w\in K'-K'_1  ,\   d(w,B)=3  \},\\
K'_3&=\{w\mid w\in K'-K'_1-K'_2,\ N(w)\cap (K'_1\cup K'_2)\neq\emptyset\},\\  K'_4&=(K'-\cup_{i=1}^3 K'_i)\cap N(K), \quad K'_5=(K'-\cup_{i=1}^4 K'_i) \cap N(I-I_{62}^{(3)})\cap N(I_{62}^{(33)}),\\ K'_6&=(K'-\cup_{i=1}^5 K'_i) \cap N(I_{62}^{(31)}\cup I_{62}^{(32)})\cap N(I_{62}^{(33)}),\\
K'_7&=\{w\mid w\in K'-\cup_{i=1}^6 K'_i ,\  w \text{ is isolated in }G[K'-\cup_{i=1}^6 K'_i]\},\\
K'_8&=(K'-\cup_{i=1}^7 K'_i)\cap N(I-I_{62}^{(3)}),\ K'_9=(K'-\cup_{i=1}^8K'_i)\cap N(I_{62}^{(31)}\cup I_{62}^{(32)}),\\
K'_{10}&=(K'-\cup_{i=1}^9K'_i)\cap N(I_{62}^{(33)}).
\end{align*}

We further partition $K'_2$, $K'_8$ and $K'_9$ as follows.
\begin{align*}
K'_{21}&=K'_2\cap N(K'_3),\quad K'_{22}=\{w\mid w\in K'_2-K'_{21},\ w\text{ is isolated in }G[K'_2-K'_{21}]\},\\
K'_{23}&=\{w\mid w\in K'_2-K'_{21},\ w\text{ is not isolated in }G[K'_2-K'_{21}]\};\\
K'_{81}&=K'_8\cap N(K'_9),\quad K'_{82}=\{w\mid w\in K'_8-K'_{81},\ w\text{ is isolated in }G[K'_8-K'_{81}]\},\\
K'_{83}&=\{w\mid w\in K'_8-K'_{81},\ w\text{ is not isolated in }G[K'_8-K'_{81}]\};\\
K'_{91}&=K'_9\cap N(K'_8),\quad K'_{92}=\{w\mid w\in K'_9-K'_{91},\ w\text{ is isolated in }G[K'_9-K'_{91}]\},\\
K'_{93}&=\{w\mid w\in K'_9-K'_{91},\ w\text{ is not isolated in }G[K'_9-K'_{91}]\};
\end{align*}

\begin{proposition}\label{pro2}
{\rm (i)} For $w\in K'$, if $N(w)\cap I\subseteq I_{62}^{(33)}$, then $w\in \cup_{i=1}^4 K'_i$. Thus $K'_{10}=\emptyset$. \\		
{\rm (ii)} For $w\in K'$, if $N(w)\cap I\subseteq I_{62}^{(3)}$ and $g^*=5$, then $w\in \cup_{i=1}^4 K'_i$. Moreover, $K'_6\cup K'_{9}\cup K'_{10}=\emptyset$ if $g^*=5$.
\end{proposition}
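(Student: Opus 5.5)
The plan is to argue by contradiction. Fix $w\in K'$ and suppose that $w\notin \cup_{i=1}^4K'_i$. By the definitions of $K'_1,\dots,K'_4$, this means:
\begin{itemize}
\item $w\notin V(\rbjt{D_3})$;
\item $d(w,B)\ge 4$;
\item $N(w)\cap (K'_1\cup K'_2)=\emptyset$;
\item $N(w)\cap K=\emptyset$.
\end{itemize}
Since $w\in K'\subseteq S_{3,4}$, we have $N(w)\subseteq K'\cup I$. Moreover $d(w,u)=3$ forces $N(w)\cap I\neq\emptyset$.

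Pick $x\in N(w)\cap I$. In (i) we have $x\in I_{62}^{(33)}$, and in (ii) we have $x\in I_{62}^{(3)}$. First I will record what the partition definitions say about $N(x)$.
\begin{itemize}
\item Since $x\in I_6$, Observation~\ref{obs1} gives $N(x)\cap(S_{2,2}\cup J\cup S_{3,3}\cup K)=\emptyset$.
\item By the definitions of $I_3,I_4,I_5$, $x$ has no neighbor in $I_1\cup I_2\cup K_1\cup K_2\cup K_3$. Also $x$ has no neighbor in $\cup_{i=1}^3A_i$, since $x\in I_{62}$.
\item Since $x\in I_{62}^{(3)}$, we have $x\notin V(\rbjt{D_3})$ and $N(x)\cap I_{61}=\emptyset$.
\item If moreover $x\in I_{62}^{(33)}$, then $x$ has no neighbor in $(\cup_{i=1}^3I'_i)\cup I'_{41}\cup I'_{51}\cup (I-I_6)$.
\end{itemize}
Hence every neighbor of $x$ lies in $A\setminus\cup_{i=1}^3A_i$, in $I_{62}$, in the restricted part of $I'$, or in $K'$.

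The main step is to look at a shortest cycle $C$ through the edge $wx$. Its length is at most $g^*\le 5$, so $C=w x z_1\cdots z_k w$ with $k\le 3$. The last vertex $z_k$ before returning to $w$ lies in $K'\cup I$. I will trace the possible locations of $z_1,\dots,z_k$ layer by layer, using $d(\cdot,u)$ and $d(\cdot,v)$: consecutive vertices differ by at most one in each distance. This leaves a short list of shapes.
\begin{itemize}
\item \emph{(a)} $C$ passes through a vertex of $A$ and back through $I\cup I'$. This yields a path of length $3$ from $w$ to $A$, and I will try to use it to place $w$ or a neighbor of $w$ in the sets built in Orientation~\ref{cons8}.
\item \emph{(b)} $C$ stays inside $I\cup I'\cup K'$. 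Then the exclusions above should force a neighbor of $w$ in $K'_1\cup K'_2$, or force $d(w,B)=3$, because a vertex of $I-I_6$ reached along $C$ is adjacent to $S_{2,2}\cup J$ or to sets leading to $B$ within two steps.
\item \emph{(c)} The remaining possibility is that $C$ is one of the $5$-cycles $w_0wy_1y_2y_3$ produced in (ii) of Orientation~\ref{cons8}. In that case $w\in V(\rbjt{D_3})$, which contradicts $w\notin K'_1$.
\end{itemize}
Combining the exclusions on $N(x)$ with these three shapes should give the contradiction in (i). The consequence $K'_{10}=\emptyset$ is then immediate. Indeed, a vertex of $K'_{10}$ lies outside $\cup_{i=1}^9K'_i$, so it has no neighbor in $(I-I_{62}^{(3)})\cup I_{62}^{(31)}\cup I_{62}^{(32)}$; hence its neighbors in $I$ all lie in $I_{62}^{(33)}$, and (i) applies.

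For (ii) the hypothesis is weaker, because $x$ may lie in $I_{62}^{(31)}\cup I_{62}^{(32)}$. When $x\in I_{62}^{(31)}$, the vertex $x$ lies on a directed cycle of length at most $g^*$ through $u$ created in Orientation~\ref{cons8.5}. I would use $g^*=5$ to pin down that cycle as the $5$-cycle $uww_1w^*w_2u$ (or its $4$-cycle analogue) and show that $w$ is then adjacent to $K$ or lies close to $B$. When $x\in I_{62}^{(32)}$, the vertex $x$ is adjacent to $(\cup_{i=1}^3I'_i)\cup I'_{41}\cup I'_{51}\cup(I-I_6)$, and this neighbor should give $d(w,B)\le 3$ or put $w$ into $K'_1$. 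The final consequences for $g^*=5$ then follow as in (i): vertices of $K'_6$, $K'_9$ and $K'_{10}$ have no neighbors in $I-I_{62}^{(3)}$, so each of them falls into $\cup_{i=1}^4K'_i$, a contradiction.

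I expect the main obstacle to be (ii), and specifically the case $x\in I_{62}^{(31)}$ with a $5$-cycle. There the directed cycles from Orientation~\ref{cons8.5} must be matched carefully against the length-$5$ cycle through $wx$, and I am not yet sure how $g^*=5$ is used to rule out the bad configuration.
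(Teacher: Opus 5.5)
Your exclusions on $N(x)$ are correct, but you apply them to the wrong object, and the hypothesis $d(G)=4$ never enters your argument. That hypothesis is the only thing that can force the conclusion.

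\textbf{Why the shortest-cycle approach cannot close.} Outside $K'_1$, membership in $\cup_{i=1}^4K'_i$ is a statement about distance to $B$: either $d(w,B)=3$, or $w$ has a neighbour at distance $3$ from $B$, or $w$ has a neighbour in $K$. The edge-girth bound only says that $wx$ lies on some short cycle. Since $g^*$ is a maximum over edges, that cycle can be a $2$-cycle of parallel edges, or a triangle $wxz$ with $z\in K'$. In either case your case (b) reaches no vertex of $I-I_6$ and tells you nothing about $z$, which is in exactly the same position as $w$. Case (c) is also unjustified. Lying on a cycle of the shape used in Orientation~\ref{cons8} does not mean the iterative procedure oriented that cycle. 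The procedure orients one chosen cycle per undirected vertex of $I_{61}^{{(2)}}$, and such a vertex is adjacent to $\cup_{i=1}^3A_i$, which your $x$ is not. So $w\in V(\rbjt{D_3})$ does not follow, and the split (a)--(c) leaves cases open.

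\textbf{The missing step: use a shortest $(w,B)$-path.}
\begin{enumerate}
\item Note $B\neq\emptyset$, since a shortest cycle through $uv$ meets $B$. As $w\in S_{3,4}$ and $B\subseteq S_{2,1}$, we have $d(w,B)\ge 3$, and $d(G)=4$ gives $d(w,B)\le 4$.
\item If $d(w,B)=3$, then $w\in K'_1\cup K'_2$. Otherwise take a shortest path $wx_1x_2x_3x_4$ with $x_4\in B$.
\item If $x_1\in K$, then $w\in \cup_{i=1}^4K'_i$. If $x_1\in K'$, then $d(x_1,B)=3$, so $x_1\in K'_1\cup K'_2$ and $w\in\cup_{i=1}^3K'_i$.
\item If $x_1\in I$, then $x_1\in I_{62}^{{(33)}}$. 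Observation~\ref{obs1}, the definitions of $A',B',I',J'$, and $d(K',B)\ge 3$ give $x_2\in A\cup I\cup I'$ and $x_3\in A_1\cup S_{2,2}\cup J$.
\item If $x_3\in A_1$, then $x_2\in(\cup_{i=1}^3A_i)\cup(I-I_6)\cup I_{61}\cup(\cup_{i=1}^3I'_i)\cup I'_{41}\cup I'_{51}$. For the $I'$ part, use that $x_2$ is adjacent to $x_1\in I_{62}^{{(3)}}$.
\item If $x_3\in S_{2,2}\cup J$, then $x_2\in A_1\cup A_2\cup I_1$.
\end{enumerate}
Both outcomes in the last two steps contradict your own list of exclusions for $N(x_1)$, which proves (i).

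\textbf{Where $g^*=5$ enters in (ii).} The fact $l_G(uv)=5$ forbids a $4$-cycle $uvbau$, so $A_1=B_1=\emptyset$. The case $x_3\in A_1$ then disappears. The case $x_3\in S_{2,2}\cup J$ forces $x_2\in A_2\cup I_1$, and no vertex of $I_{62}^{{(3)}}$ is adjacent to $A_2\cup I_1$. No analysis of the directed cycles from Orientation~\ref{cons8.5} is needed, so the obstacle you anticipated is not where the difficulty lies.
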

\begin{proof}
(i) Let $w\in K'$. By the definitions of $I_{62}^{(1)}$, $I_{62}^{(32)}$ and $I_{61}$, we have $N(I_{62}^{(33)})\cap ((\cup_{i=1}^3 I'_i)\cup I'_{41}\cup I'_{51}\cup (I-I_6)\cup I_{61}\cup (\cup_{i=1}^3 A_i))=\emptyset$. Thus there is no $(w,B)$-path $wx_1x_2x_3x_4$ such that
$x_1\in I_{62}^{(33)}$, $x_2\in I'\cup I\cup A$, $x_3\in A$ and $x_4\in B$.
Indeed, otherwise we would have $x_3\in A_1$ and $x_2\in (\cup_{i=1}^3 I'_i)\cup I'_{41}\cup I'_{51}\cup (I-I_6)\cup I_{61}\cup (\cup_{i=1}^3 A_i)$, a contradiction with $x_2\in N(x_1)$. Clearly, we have $N(I_{62}^{(33)})\cap (I_1\cup S_{2,2}\cup  S_{3,3})=\emptyset$. Thus there is no $(w,B)$-path $wy_1y_2y_3y_4$ satisfying $y_1\in I_{62}^{(33)}$, $y_2\in I\cup S_{3,3}$, $y_3\in S_{2,2}\cup J$ and $y_4\in B$, since necessarily $y_2\in I_1\cup S_{3,3}$. Since $d(G)=4$ and $N(w)\cap I\subseteq I_{62}^{(33)}$, by checking every shortest $(w,B)$-path, we conclude that $N(w)\cap (K'_1\cup K'_2\cup K)\neq\emptyset$, which yields $w\in \cup_{i=1}^4 K'_i$. For any $w\in K'_{10}$, we have $N(w)\cap I\subseteq I_{62}^{(33)}$ by the definitions of $K'_8$ and $K'_{9}$. Consequently, $w\in \cup_{i=1}^4 K'_i$, which gives a contradiction. Therefore, (i) holds.

{\noindent (ii)} Since $g^*=5$, we have $A_1=B_1=\emptyset$. Clearly, we have $N(I_{62}^{(3)})\cap (A_2\cup I_1\cup S_{2,2}\cup S_{3,3})=\emptyset$. Since $d(G)=4$ and $N(w)\cap I\subseteq I_{62}^{(3)}$, by checking every shortest $(w,B)$-path and the definition of $I_6$, we conclude that $N(w)\cap (K'_1\cup K'_2\cup K)\neq\emptyset$, which yields $w\in \cup_{i=1}^4 K'_i$. For any $w\in K'_6\cup K'_9\cup K'_{10}$, we have $N(w)\cap I\subseteq I_{62}^{(3)}$. Consequently, if $g^*=5$, then $w\in \cup_{i=1}^4 K'_i$, which gives a contradiction. Hence (ii) holds. 	  
\end{proof}

\vspace{10pt}
To control the directed distance of the vertices in $K'_2$, we define a notation:\vspace{-10pt} $$I^*=\{w\mid w\in I, \exists\  w'\in K'_2 \text{ such that there exists a shortest }(w',B)\text{-path}\\ \text{ containing }w \}.\vspace{-10pt}$$ It follows that $I^*\subseteq I-I_{62}$ and $N(w)\cap I^*\neq\emptyset$ for $w\in K'_2$. 

\begin{Orientation}\label{cons11}
{\noindent\rm (i)}	Let $K'\rightarrow I_{62}^{(33)}$, $I^*\rightarrow K'_{21}$, $K'_1\cup K'_{21}\rightarrow K'_3\rightarrow I$, $K\rightarrow K'_2\cup K'_3$, $K\rightarrow K'_4\rightarrow I$, $(I-I_{62}^{(3)})\rightarrow K'_5$, $(I_{62}^{(31)}\cup I_{62}^{(32)})\rightarrow K'_6$, $(I-I_{62}^{(3)})\rightarrow K'_{81}\rightarrow K'_{91}\rightarrow (I_{62}^{(31)}\cup I_{62}^{(32)})$, $ K'_{81}\rightarrow K'_{82}\rightarrow (I-I_{62}^{(3)})$, $ (I_{62}^{(31)}\cup I_{62}^{(32)})\rightarrow K'_{92}\rightarrow K'_{91}$.  

{\noindent\rm (ii)} For $w\in K'_{22}$, if $w$ is not isolated in $G[K']$, then by the definitions of $K'_{21}$ and $K'_3$, $N(w)\cap (K'_1\cup K'_{21})\neq\emptyset$. Let $I^*\rightarrow w\rightarrow K'_1$ if $N(w)\cap K'_1\neq \emptyset$, and  $K'_{21}\rightarrow w\rightarrow I^*$ otherwise. Now we assume $w$ is isolated in $G[K']$. If $N(w)\cap K\neq\emptyset$, then let 
$K\rightarrow w\rightarrow I^*$.
If $N(w)\cap K=\emptyset$, then $|[w,I]|\geq 2$, let $w'\rightarrow w$ for some $w'\in I^*$, and the other edges of $[w,I]$ away from $w$.

{\noindent\rm (iii)} For $w\in K'_7$, $N(w)\cap (I-I_{62}^{(33)})\neq\emptyset$ by (i) of Proposition \ref{pro2}. If $w$ is not isolated in $G[K']$, then $N(w)\cap \left(\cup_{i=3}^6 K'_i\right)\neq\emptyset$ by the definition of $K'_3$, and thus let $(I-I_{62}^{(33)})\rightarrow w\rightarrow \cup_{i=3}^6 K'_i$. Now we assume $w$ is isolated in $G[K']$. Then by the definition of $K'_4$, we have $N(w)\cap K=\emptyset$, and thus $|[w,I]|\geq 2$. If $N(w)\cap (I-I_{62}^{(3)})\neq\emptyset$, then orient $w'w$ as $w'\rightarrow w$ for some $w'\in N(w)\cap (I-I_{62}^{(3)})$ and the other edges of $[w,I]$ away from $w$. Otherwise, we orient the edges of $[w,I]$ in two ways.

{\noindent\rm (iv)} Let $R=I^*,S=K'_{23}$, $R=I-I_{62}^{(3)},S=K'_{83}$, or $R=I_{62}^{(31)}\cup I_{62}^{(32)},S=K'_{93}$. Then we give an $R$-$S$ orientation for the edges of $G[S]$ and $[R,S]$. 


{\noindent\rm (v)} For all edges not oriented in $G$, we orient these edges arbitrarily. 
\end{Orientation}

At this point, we have constructed an orientation $\rbjt{G}$ of $G$. 

\begin{proposition}\label{pro5}
For any $w\in I^*$, we have $\partial(w,u)\leq 4$ and $\partial(v,w)\leq 6$.
\end{proposition}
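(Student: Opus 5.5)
The plan is to locate $I^*$ inside the parts of $I$ already controlled by Proposition~\ref{pro1.5}, using the structure of a shortest $(w',B)$-path for $w'\in K'_2$. The key point is that $I^*\subseteq I-I_{62}$, so every $w\in I^*$ lies either in $I-I_6$ or in $I_{61}$.

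\emph{Step 1 (shape of the path).} Fix $w\in I^*$, and let $w'\in K'_2$ be such that a shortest $(w',B)$-path $P=w'x_1x_2b$ of length $d(w',B)=3$ contains $w$, with $b\in B$. Since $N(w')\subseteq K'\cup I$ and no vertex of $K'\cup I$ is adjacent to $B$, the vertex $x_2$ is not in $K'\cup I$. If $x_1\in K'$, then $x_2\in K'\cup I$, which is impossible. Hence $x_1\in I$, and $w=x_1$. Now $x_2\in N(x_1)\cap N(b)$. Vertices of $I$ have neighbours only in $S_{2,2}\cup I\cup I'\cup A\cup J\cup S_{3,3}\cup K\cup K'$, and among these only $S_{2,2}\cup J\cup A$ meet $N(B)$. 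So $x_2\in S_{2,2}\cup J\cup A$.

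\emph{Step 2 (case split on $x_2$).} If $x_2\in S_{2,2}\cup J$, then $w\in I\cap N(S_{2,2}\cup J)=I_1$. Proposition~\ref{pro1.5}(i) then gives $\partial(w,u)\le 2$ and $\partial(v,w)\le 6$.

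If $x_2\in A$, then $x_2\in A\cap N(B)=A_1$. The cycle $u\,x_2\,b\,v\,u$ has length $4$, so $g^*=4$, since $A_1=\emptyset$ when $g^*=5$ (as already used in the proof of Proposition~\ref{pro2}(ii)). In this case either $w\in I-I_6$, and Proposition~\ref{pro1.5}(i) gives the bounds, or $w\in I_6\cap N(A_1)\subseteq I_{61}$. For $w\in I_{61}$, Proposition~\ref{pro1.5}(ii) gives $\partial(w,u)\le g^*=4$ and $\partial(v,w)\le g^*+2=6$.

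\emph{Main obstacle.} The only delicate point is Step 2: the weaker bounds for $I_{61}$ must be shown to occur only when $g^*=4$. This rests on $x_2\in A_1$ forcing a $4$-cycle through $uv$. I would also double-check the neighbourhood bookkeeping in Step 1, namely that $I$ has no neighbours in $B$ and that $I'$ has no neighbours in $B$, using the distance layers $S_{i,j}$.
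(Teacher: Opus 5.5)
Your proof is correct and follows the paper's argument. Like the paper, you split $I^*\subseteq (I-I_6)\cup I_{61}$, note that $w\in I_{61}$ forces the shortest path to pass through $A_1$ so that the $4$-cycle $u\,x_2\,b\,v\,u$ gives $g^*=4$, and apply Proposition~\ref{pro1.5}; you just spell out the path analysis that the paper leaves implicit.

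One small slip: $N(w')\subseteq S_{3,4}\cup I=K\cup K'\cup I$, not $K'\cup I$. The case $x_1\in K$ is ruled out just as easily, since $d(x_1,b)\le 2$ with $b\in S_{2,1}$ gives $d(x_1,v)\le 3$, which excludes all of $S_{3,4}$.
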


\begin{proof}
Let $w\in I^*$. Then $I^*\subseteq I-I_{62}$ and there exists $w'\in K'_2$ and a shortest $(w',B)$-path $P$ containing $w$. Clearly, $I-I_{62}=(I-I_6)\cup I_{61}$. If $w\in I_{61}$, then $V(P)\cap A_1\neq\emptyset$, and thus $A_1\neq\emptyset$, which implies $g^*=4$, $\partial(w,u)\leq g^*=4$ and $\partial(v,w)\leq g^*+2=6$ by Proposition \ref{pro1.5}. If $w\in I-I_6$, then $\partial(w,u)\leq 2$ and $\partial(v,w)\leq 6$ by Proposition \ref{pro1.5}.
\end{proof}

\begin{claim}\label{cla12}
Table \ref{Table5} presents upper bounds on directed distances for $K_i'$.

\begin{table}[H]
\centering
\renewcommand{\arraystretch}{1.1}
\setlength{\tabcolsep}{4pt}
\begin{tabular}{|l|*{9}{>{\centering\arraybackslash}p{1.35cm}|}}
\hline
\textbf{for \(w\) in} &$K'_1$&$K'_2\cup K'_7$&$K'_3$&$K'_4$&$K'_5$& $K'_6$&$K'_8$&$K'_9$\\
\hline
$\partial(w,u)\le$ &$g^*-1$&$g^*+2$&$g^*+1$&$g^*+1$&$g^*+1$&$5$&$g^*+2$&$g^*+2$\\
\hline
$\partial(v,w)\le$ &$g^*+1$&$8$&$8$&$6$&$g^*+3$&$8$&$g^*+4$&$9$\\
\hline
\end{tabular}
\vspace{-5pt}
\caption{Upper bounds on distances for $w\in K'_{i}$.}
\label{Table5}
\vspace{-15pt}
\end{table}
\end{claim}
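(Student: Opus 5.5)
The plan is to treat each class $K'_i$ separately, following the pattern of Claims \ref{cla9_1} and \ref{cla14}. For each class I will exhibit a directed path from $w$ into a set whose distance to $u$ is already bounded, and a directed path into $w$ from a set whose distance from $v$ is already bounded. The sources are Orientations \ref{cons1}, \ref{cons8}, \ref{cons8.5} and \ref{cons11}. The bounds invoked are Proposition \ref{pro1.5} for $I$, Proposition \ref{pro5} for $I^*$, Claims \ref{cla13_1} and \ref{cla15} for $I_{62}^{(3)}$, and Claim \ref{cla7} for $K$. Throughout I use $\partial(w,u)\le \partial(w,Z)+\partial(Z,u)$ and $\partial(v,w)\le\partial(v,Z)+\partial(Z,w)$ with $Z$ a neighbouring set oriented appropriately.

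\textbf{$K'_1$.} Here $w\in V(\rbjt{D_3})$, so by Orientation \ref{cons8} there is a directed path or cycle of length at most $g^*$ through $w$. It starts in $\cup_{i=1}^3A_i$ and ends in $A-A_{94}-A_{95}\to u$, exactly as in the proof of Claim \ref{cla9}(ii),(iii). This gives $\partial(w,u)\le g^*-1$. Since $\partial(v,\cup_{i=1}^3A_i)\le 3$, it also gives $\partial(v,w)\le g^*+1$.

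\textbf{$K'_2$, $K'_3$, $K'_4$.} Each vertex of $K'_2$ has a neighbour in $I^*$, and Proposition \ref{pro5} gives $\partial(v,I^*)\le 6$ and $\partial(I^*,u)\le 4$.
\begin{itemize}
\item For $K'_{21}$ we have $I^*\to w$, so $\partial(v,w)\le 7$. The out-path is $w\to K'_3\to I$, with $K'_3\to I$ landing in $I-I_6$ or in $I_{62}^{(3)}$; Proposition \ref{pro1.5} then gives $\partial(w,u)\le g^*+2$.
\item $K'_{22}$ is handled by the subcases of Orientation \ref{cons11}(ii); each gives an in-neighbour and an out-neighbour among $I^*,K'_1,K'_{21},K$.
\item For $K'_{23}$, Lemma \ref{l1} gives $\theta(w,I^*)\le 2$, hence $\partial(v,w)\le 8$ and $\partial(w,u)\le 6\le g^*+2$.
\item For $K'_3$, use $K'_1\cup K'_{21}\to w\to I$; the resulting bounds are $\partial(v,w)\le 8$ and $\partial(w,u)\le g^*+1$.
\item For $K'_4$, use $K\to w\to I$ with $\partial(v,K)\le 5$; this gives $\partial(v,w)\le 6$ and $\partial(w,u)\le g^*+1$.
\end{itemize}
The delicate point is $\partial(w,u)$ for $w\to I$ with an endpoint in $I_{61}$ (bound $g^*$) or $I_{62}^{(3)}$ (bound $g^*$). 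I will check this against Proposition \ref{pro1.5}, Claim \ref{cla13_1} and the orientations $K'\to I_{62}^{(33)}$ from Orientation \ref{cons11}.

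\textbf{$K'_5$, $K'_6$.} For $K'_5$, the in-edge is $(I-I_{62}^{(3)})\to w$, with $\partial(v,\cdot)\le g^*+2$ by Proposition \ref{pro1.5}, giving $\partial(v,w)\le g^*+3$. The out-edge is $w\to I_{62}^{(33)}$, giving $\partial(w,u)\le g^*+1$. For $K'_6$, by Proposition \ref{pro2}(ii) we have $g^*=4$ whenever $K'_6\neq\emptyset$. The in-edge comes from $I_{62}^{(31)}\cup I_{62}^{(32)}$, where Claim \ref{cla15} gives a bound of $7$, so $\partial(v,w)\le 8$. The out-edge goes to $I_{62}^{(33)}$, giving $\partial(w,u)\le 5$.

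\textbf{$K'_7$, $K'_8$, $K'_9$.}
\begin{itemize}
\item For $K'_7$, follow Orientation \ref{cons11}(iii). The in-neighbour lies in $I-I_{62}^{(33)}$, whose $\partial(v,\cdot)$ is at most $\max\{g^*+2,\,g^*+3,\,2g^*-3\}\le 7$ for $g^*\le5$. The out-neighbour lies in $\cup_{i=3}^6K'_i$ (using the bounds just proved) or in $I$. This gives $\partial(v,w)\le 8$ and $\partial(w,u)\le g^*+2$.
\item For $K'_8$, the paths $(I-I_{62}^{(3)})\to K'_{81}\to K'_{91}\to(I_{62}^{(31)}\cup I_{62}^{(32)})$, $K'_{81}\to K'_{82}\to (I-I_{62}^{(3)})$ and the $R$-$S$ orientation on $K'_{83}$ give $\partial(v,w)\le g^*+4$ and $\partial(w,u)\le g^*+2$.
\item $K'_9$ is symmetric to $K'_8$. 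Its in-bounds come from Claim \ref{cla15}, using $2g^*-3\le 7$ and $K'_9=\emptyset$ when $g^*=5$.
\end{itemize}

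The main obstacle is the arithmetic for the $I_{62}^{(3)}$-related classes, namely $K'_6$, $K'_9$ and the $\partial(w,u)$ bounds through $I_{62}^{(3)}$. There I rely on Proposition \ref{pro2} to force $g^*=4$, so that $2g^*-3$ and $g^*+3$ both become $\le 7$.
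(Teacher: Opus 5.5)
Your class-by-class plan is the same as the paper's proof, and your bounds for $K'_1$--$K'_6$, $K'_8$ and $K'_9$ agree with it. The one step that fails as written is $K'_7$.

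There you bound $\partial(v,\cdot)$ on $I-I_{62}^{(33)}$ by $\max\{g^*+2,\,g^*+3,\,2g^*-3\}\le 7$ ``for $g^*\le 5$''. For $g^*=5$, however, the middle term equals $8$; it is the bound from Claim \ref{cla15} on $I_{62}^{(32)}$. Suppose $g^*=5$ and all $I$-neighbours of $w\in K'_7$ lie in $I_{62}^{(32)}$. This covers both the non-isolated case, where $(I-I_{62}^{(33)})\to w$, and the two-ways subcase of Orientation \ref{cons11}(iii). Then your argument only gives $\partial(v,w)\le 9$, not the $8$ claimed in the table. For $g^*=4$ your estimate is fine, since $g^*+3=7$ and $2g^*-3=5$.

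The missing ingredient is Proposition \ref{pro2}(ii). You invoke it for $K'_6$ and $K'_9$, but it is needed here too.
\begin{itemize}
\item Since $w\in K'_7$ is not in $\cup_{i=1}^4K'_i$, Proposition \ref{pro2}(ii) forbids $N(w)\cap I\subseteq I_{62}^{(3)}$ when $g^*=5$. So $w$ has a neighbour in $I-I_{62}^{(3)}$.
\item In the non-isolated case, that neighbour is oriented into $w$ by $(I-I_{62}^{(33)})\to w$.
\item In the isolated case, it is oriented into $w$ by the choice of $w'$. The two-ways subcase cannot occur when $g^*=5$.
\item Proposition \ref{pro1.5} gives $\partial(v,I-I_{62}^{(3)})\le g^*+2=7$, so $\partial(v,w)\le 8$.
\end{itemize}
This is exactly how the paper treats $K'_7$.

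A minor point: in the last subcase of Orientation \ref{cons11}(ii) for $K'_{22}$, the out-neighbour lies in $I$, not among $I^*,K'_1,K'_{21},K$. This gives $\partial(w,u)\le g^*+1$, which is still within the stated bound.
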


\begin{proof}
If $w\in K'_1$, then by Orientation \ref{cons8}, there exists a directed path (or cycle) $x_1x_2wx_3x_4x_5$, $x_1x_2x_3wx_4x_5$, or $y_1y_2wy_3y_4$ such that $x_1,y_1\in \cup_{i=1}^3 A_i$ and $x_5,y_4\in A$. Moreover, if \( g^* = 4 \), there exists only \( y_1 y_2 w y_3 y_4 \). By the definition of $I_{62}^{(2)}$, we have $x_4,y_3\notin I_{62}^{(3)}$, which implies $x_5,y_4\in A-A_{94}-A_{95}$. Hence the conclusion follows from Claim \ref{cla1} and the fact that $(A-A_{94}-A_{95})\rightarrow u$ given in Orientation \ref{cons8.5}.

If $w\in K_2'$, we have $N(w)\cap I^*\neq\emptyset$.
For $w\in K'_{21}$, we have $N(w)\cap K'_3\neq\emptyset$. By Orientation \ref{cons11}, $I^*\rightarrow w\rightarrow K'_3\rightarrow I$. Then $\partial(w,u)\leq \partial(w,I)+\partial(I,u)\leq 2+g^*$ by Proposition \ref{pro1.5}, and $\partial(v,w)\leq \partial(v,I^*)+\partial(I^*,w)\leq 6+1= 7$ by Proposition \ref{pro5}. For $w\in K'_{22}$, if $w$ is not isolated in $G[K']$, then $\partial(w,u)\leq g^*$ and $\partial(v,w)\leq 7$ if $N(w)\cap K'_1\neq\emptyset$, and $\partial(w,u)\leq 5$ and $\partial(v,w)\leq 8$ otherwise by Orientation \ref{cons11} and Proposition \ref{pro5}; if $w$ is isolated in $G[K']$, then the expected result also holds by Claim \ref{cla7}. By Lemma \ref{l1}, we have $\partial(w,u)\leq 6$ and $\partial(v,w)\leq 8$ for $w\in K'_{23}$.

If $w\in K'_3$, then $\partial(w,u) \leq\partial(w,I)+\partial(I,u)\leq  1+ g^*$ by Orientation \ref{cons11} and Proposition \ref{pro1.5}.
By the definition of $K'_3$, $N(w)\cap (K'_1\cup K'_{21})\neq\emptyset$, and thus $\partial(v,w)\leq \partial(v,w_1)+\partial(w_1,w)\leq 7+1=8$ for $w_1\in K'_1\cup K'_{21}$ by the result of $K'_1\cup K'_{21}$. 

If $w\in K'_4\cup K'_5\cup K'_6$,	by Orientation \ref{cons11}, Claim \ref{cla7} and Proposition \ref{pro1.5}, we have $\partial(w,u)\leq g^*+1$ and $\partial(v,w)\leq 6$ if $w\in K'_4$, and $\partial(w,u)\leq g^*+1$ and $\partial(v,w)\leq g^*+3$ if $w\in K'_5$. For $w\in K'_6$, we have $g^*=4$ by (ii) of Proposition \ref{pro2}, which implies $\partial(w,u)\leq g^*+1=5$ and $\partial(v,w)\leq g^*+4=8$ by Proposition \ref{pro1.5} and Claim \ref{cla15}. 

If $w\in K'_7$, we have $N(w)\cap (I-I_{62}^{(33)})\neq\emptyset$ for $g^*=4$, and $N(w)\cap (I-I_{62}^{(3)})\neq\emptyset$ for $g^*=5$ by Proposition \ref{pro2}. By the definitions of $K'_5$ and $K'_6$, we have $N(w)\cap I_{62}^{(33)}=\emptyset$. Whether $w$ is an isolated vertex in $G[K']$ or not, by Orientation \ref{cons11} and Proposition \ref{pro1.5}, if $g^*=5$, then $(I-I_{62}^{(3)})\rightarrow w\rightarrow \cup_{i=3}^6 K'_i$, or $w'\rightarrow w\rightarrow w''$ for some $w'\in I-I_{62}^{(3)},w''\in I-I_{62}^{(33)}$ since $N(w)\cap I_{62}^{(33)}=\emptyset$, which implies $\partial(w,u)\leq g^*+2=7$ and $\partial(v,w)\leq g^*+3=8$. Similarly, if $g^*=4$, then $\partial(w,u)\leq 6$ and $\partial(v,w)\leq 8$ by Claim \ref{cla15}.

If $w\in K'_{81}\cup K'_{82}\cup K'_{91}\cup K'_{92}$, we have $N(w)\cap K'_{81}\neq \emptyset$ for $w\in K'_{82}\cup K'_{91}$, and $N(w)\cap K'_{91}\neq\emptyset$ for $w\in K'_{81}\cup K'_{92}$ by the definitions of $ K'_7$. By Orientation \ref{cons11}, we have $(I-I_{62}^{(3)})\rightarrow K'_{81}\rightarrow K'_{91}\rightarrow (I_{62}^{(31)}\cup I_{62}^{(32)})$, $ K'_{81}\rightarrow K'_{82}\rightarrow (I-I_{62}^{(3)})$, $ (I_{62}^{(31)}\cup I_{62}^{(32)})\rightarrow K'_{92}\rightarrow K'_{91}$, which implies that $\partial(w,u)\leq g^*+2$ and $\partial(v,w)\leq g^*+4$ by Proposition \ref{pro1.5} and Claim \ref{cla15}.

If $w\in K'_{83}\cup K'_{93}$, we consider the following two cases. For $w\in K'_{83}$, by Lemma \ref{l1}, Proposition \ref{pro1.5}, Claim \ref{cla15} and Orientation \ref{cons11}, we have $\partial(w,u)\leq g^*+2$ and $\partial(v,w)\leq g^*+4$. For $w\in K'_{93}$, by (ii) of Proposition \ref{pro2}, we have $g^*=4$. Similarly, $\partial(w,u)\leq 6$ and $\partial(v,w)\leq 9$.
\end{proof}
\begin{claim}\label{cla13}
For $w\in I_{62}^{(33)}$, we have $\partial(v,w)\leq 9$. 
\end{claim}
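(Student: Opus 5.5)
Since $\partial(v,w)\le\partial(v,x)+1$ whenever $x\to w$, the plan is to find, for each $w\in I_{62}^{(33)}$, an in-neighbour $x$ with $\partial(v,x)\le 8$. The orientations touching $I_{62}^{(33)}$ are $K'\rightarrow I_{62}^{(33)}$ from (i) of Orientation \ref{cons11}, and $(\cup_{i=1}^3 I'_i)\cup(I-I_6)\rightarrow I_{62}^{(32)}\cup I_{62}^{(33)}$ from Orientation \ref{cons12}. By the definition of $I_{62}^{(32)}$, $N(w)$ misses $(\cup_{i=1}^3 I'_i)\cup I'_{41}\cup I'_{51}\cup(I-I_6)$, so the second source is unavailable. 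I would therefore aim to show $N(w)\cap K'\neq\emptyset$ and then bound $\partial(v,K'\cap N(w))$ using Claim \ref{cla12}.

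First I show $N(w)\cap K'\neq\emptyset$. Here $w\in I_{62}\subseteq I_6$, so by Observation \ref{obs1} $N(w)\cap K'\neq\emptyset$ directly. Let $y\in N(w)\cap K'$. By Proposition \ref{pro2}(i), $K'_{10}=\emptyset$, so $y\in\cup_{i=1}^9 K'_i$. Claim \ref{cla12} gives $\partial(v,y)\le 8$ for $y\in K'_2\cup K'_7\cup K'_3\cup K'_4\cup K'_6$, since $K'_4$ gives $6$ and $K'_6$ gives $8$. For $y\in K'_1$ we get $g^*+1\le 6$, and for $y\in K'_5$ we get $g^*+3\le 8$. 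In all these cases $\partial(v,w)\le 9$.

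The remaining cases are $y\in K'_8$, with bound $g^*+4$, and $y\in K'_9$, with bound $9$. For $K'_9$, Proposition \ref{pro2}(ii) shows $K'_9=\emptyset$ when $g^*=5$, so $g^*=4$. For $K'_8$ with $g^*=4$, the bound $g^*+4=8$ suffices. The main obstacle is therefore $y\in K'_8$ with $g^*=5$, and $y\in K'_9$ with $g^*=4$, where the crude bound is $10$.

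For these cases I would exploit the definitions of $K'_5$ and $K'_6$. A vertex of $K'_8\cup K'_9$ is not in $K'_5\cup K'_6$, yet it is adjacent to $w\in I_{62}^{(33)}$. So a vertex of $K'_8$ adjacent to $I_{62}^{(33)}$ and to $I-I_{62}^{(3)}$ would lie in $K'_5$, and one of $K'_9$ adjacent to $I_{62}^{(33)}$ and to $I_{62}^{(31)}\cup I_{62}^{(32)}$ would lie in $K'_6$. Since $K'_8\subseteq N(I-I_{62}^{(3)})$ and $K'_9\subseteq N(I_{62}^{(31)}\cup I_{62}^{(32)})$, this is a contradiction, so $N(w)\cap(K'_8\cup K'_9)=\emptyset$. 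Hence $y$ always falls in one of the good classes, and $\partial(v,w)\le\partial(v,y)+1\le 9$.
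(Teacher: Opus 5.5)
Your proof is correct and is essentially the paper's argument: $N(w)\cap K'\neq\emptyset$ because $w\in I_6$ (Observation~\ref{obs1}), $N(w)\cap(K'_8\cup K'_9\cup K'_{10})=\emptyset$ by Proposition~\ref{pro2}(i) and the definitions of $K'_5$ and $K'_6$, and then $K'\rightarrow I_{62}^{(33)}$ combined with the bound $\partial(v,y)\le 8$ from Claim~\ref{cla12} for all $y\in\cup_{i=1}^7K'_i$ gives $\partial(v,w)\le 9$. The intermediate case split on $g^*$ for $K'_8$ and $K'_9$ is unnecessary, since your last paragraph shows these classes miss $N(w)$ entirely.
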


\begin{proof}
By the definition of $ I_{62}^{(33)}$, we have $N(w)\cap K'\neq\emptyset$. It is easy to check that $N(w)\cap (\cup_{i=8}^{10} K'_i)=\emptyset$ by (i) of Proposition \ref{pro2}, the definitions of $K'_5$ and $K'_6$. By Orientation \ref{cons11}, we have $\cup_{i=1}^{7} K'_i \rightarrow w$, which implies $\partial(v,w)\leq 9$ by Claim \ref{cla12}.
\end{proof}

\vspace{5pt}

We are now prepared to prove Claim \ref{cla19}.

{\noindent\textbf{\bf {Proof of (i) of Claim \ref{cla19}.}}} By Tables \ref{Table8-1} and \ref{Table8-2}, we have $\partial(x,y)\leq \partial(x,u)+\partial(u,v)+\partial(v,y)\leq g^*+13$ if $y\in B\cup J$, or $x\in B\cup J$ and $y\in A\cup A'\cup B'\cup J'\cup K\cup L\cup L'\cup S_{2,2}\cup S_{3,3}\cup M$, or $x\in J$ and $y\in M'$. Next we consider $x\in B\cup J, y\in I\cup I'\cup K'$, and $x\in B,y\in M'$.

\vspace{0.1\baselineskip}
{\noindent\bf Case 1.} $x\in B$ and $y\in I$.
\vspace{0.1\baselineskip}		

If $x\notin  B_8\cup B_9$, then by Claims \ref{cla1}, \ref{cla7}, \ref{cla7_1} and \ref{cla10}, $\partial(x,u)\leq 2g^*-2$, and thus $\partial(x,y)\leq 2g^*+8\leq g^*+13$ by Table \ref{Table8-1}. If $y\notin I_{62}^{(33)}$, then by Proposition \ref{pro1.5} and Claim \ref{cla15}, we have $\partial(v,y)\leq g^*+3$, and thus $\partial(x,y)\leq g^*+13$ by Table \ref{Table8-1}.

Now we assume $x\in B_8\cup B_9$ and $y\in I_{62}^{(33)}$. Then $N(y)\cap ((\cup_{i=1}^3 A_i)\cup (\cup_{i=1}^3 I'_i)\cup I'_{41}\cup I'_{51}\cup (I-I_6)\cup I_{61}\cup K)=\emptyset$ by the definitions of $I_{61}$, $I_{62}^{(32)}$ and $I_{62}^{(1)}$. Clearly, $N(I'-(\cup_{i=1}^3 I'_i)- I'_{41}- I'_{51})\cap I\cap (\cup_{i=1}^6 A_i)=\emptyset$. Let $x'\in N(x)\cap (J_5\cup J_6)$. Then $N(x')\cap (J_1\cup J_2\cup L_1)=\emptyset$ by the definitions of $J_3$ and $J_4$. Checking all $(x',y)$-paths, $d(x',y)\geq 5$, a contradiction with $d(G)=4$.

\vspace{0.1\baselineskip}
{\noindent \rm \bf Case 2.} $x\in B$ and $y\in I'$.
\vspace{0.1\baselineskip}	

If $x\notin  B_8\cup B_9\cup B_{10}^{(1)}$, then by Claims \ref{cla1}, \ref{cla7}, \ref{cla7_1} and \ref{cla10}, $\partial(x,u)\leq 6$, and thus $\partial(x,y)\leq g^*+12$ by Table \ref{Table8-1}. If $y\notin  I'_{52}\cup I'_{62}\cup I'_{8}$, then by Claims \ref{cla9.5} and \ref{cla14}, $\partial(v,y)\leq g^*+3$, and thus $\partial(x,y)\leq g^*+13$.

Now we assume $x\in  B_8\cup B_9\cup B_{10}^{(1)}$ and $y\in I'_{52}\cup I'_{62}\cup I'_{8}$. Then $N(y)\cap ((\cup_{i=1}^4 A_i)\cup (I-I_{62}^{(3)}))=\emptyset$. Let $x'\in N(x)\cap (J_5\cup J_6\cup J')$. Then $d(x',y)=4$, and thus we take a shortest $(x',y)$-path $P=x'x_1x_2x_3y$. Then $x_1\in B_1\cup J_1$, $x_2\in A_1\cup I_1$ and $x_3\in I'$, and thus $x_3\in (\cup_{i=1}^4 I'_i)\cup I'_{51}\cup I'_{61}\cup I'_{7}$ since $N(x_3)\cap (A_1\cup I_1)\neq\emptyset$, and $x_3\rightarrow y$ by Orientation \ref{cons12}. If $x_3\notin I'_{42}$, then by Claims \ref{cla9.5} and \ref{cla14}, we have $\partial(v,x_3)\leq g^*+2$. If $x_3\in I'_{42}$, then $N(x_3)\cap A_1=\emptyset$, and thus $N(x_3)\cap I_1\neq\emptyset$, which implies $\partial(v,x_3)\leq\partial(v,I_1)+\partial(I_1,x_3)\leq 4$ by Claim \ref{cla1} and Orientation \ref{cons12}. Then $\partial(v,y)\leq \partial(v,x_3)+\partial(x_3,y)\leq g^*+3$ by Orientation \ref{cons12}, which implies $\partial(x,y)\leq g^*+13$.

\vspace{0.1\baselineskip}
{\noindent \rm \bf Case 3.} $x\in B$ and $y\in K'$.
\vspace{0.1\baselineskip}

If $x\notin  B_8\cup B_9$, then by Claims \ref{cla1}, \ref{cla7}, \ref{cla7_1} and \ref{cla10}, $\partial(x,u)\leq 2g^*-2$, and thus $\partial(x,y)\leq 2g^*+8$ by Table \ref{Table8-2}. 

Now we assume $x\in B_8\cup  B_9$. Then $N(x)\cap J\subseteq J_5\cup J_6$, $N(x)\cap B_1=\emptyset$ and $d(x,y)=4$. Let $P=xx_1x_2x_3y$ be a shortest $(x,y)$-path. Then $x_1\in B_2\cup B_3\cup B_4\cup J'$, $x_2\in S_{2,2}\cup J_1$ and $x_3\in I_1$. 

If $x_1\in B_2\cup B_3\cup B_4$, then by Orientation \ref{cons1} and Claim \ref{cla1}, we have $x\rightarrow x_1$ and $\partial(x_1,u)\leq 4$, and thus $\partial(x,u)\leq 5$. Hence $\partial(x,y)\leq 15$ by Table \ref{Table8-2}. 


If $x_1\in J'$, then $x_2\in J_1$ and $x_3\in I_1$, so that $x_1\in J_1'\cup J_2'$. By Orientation \ref{cons81}, we have $x\to x_1$. Applying Claim \ref{cla9_1}, we obtain $\partial(x,u)\leq \partial(x,x_1)+\partial(x_1,u)\leq 1+2g^*-3 = 2g^*-2$, which yields $\partial(x,y)\leq 2g^*+8\leq g^*+13$.

\vspace{0.1\baselineskip}
{\noindent \rm \bf Case 4.} $x\in J$ and $y\in I$.
\vspace{0.1\baselineskip}

If $y\notin I_{62}^{(33)}$, then by Proposition \ref{pro1.5} and Claim \ref{cla15}, we have $\partial(v,y)\leq g^*+3$, and thus $\partial(x,y)\leq g^*+12$ by Table \ref{Table8-2}. If $x\notin J_5$, then by Claims \ref{cla1}, \ref{cla7} and \ref{cla10}, we have $\partial(x,u)\leq 7$, and thus $\partial(x,y)\leq 17$.

Now we assume $y\in I_{62}^{(33)}$ and $x\in  J_5$. Then $N(x)\cap (J_1\cup J_2\cup L_1)=\emptyset$ and $N(y)\cap ((\cup_{i=1}^3 A_i)\cup  I_{61}\cup (\cup_{i=1}^3 I'_i)\cup I'_{41}\cup I'_{51}\cup (I-I_6)\cup K)=\emptyset$ by the definition of $J_5,I_{61},I_{62}^{(1)},I_{62}^{(32)}$, and thus $d(x,y)> 4$, a contradiction.

\vspace{0.1\baselineskip}
{\noindent \rm \bf Case 5.} $x\in J$ and $y\in I'$.
\vspace{0.1\baselineskip}

If $x\notin J_5$, then $\partial(x,u)\leq 7$ by Claims \ref{cla1}, \ref{cla7} and \ref{cla10}, and thus $\partial(x,y)\leq g^*+13$. If $y\notin I'_{8}$, then by Claims \ref{cla9.5} and \ref{cla14}, $\partial(v,y)\leq g^*+4$, and thus $\partial(x,y)\leq g^*+13$.

Now we assume $x\in J_5$ and $y\in I'_{8}$. Then $N(x)\cap (J_1\cup J_2\cup L_1)=\emptyset$, and $N(y)\cap ((\cup_{i=1}^4A_i)\cup I)=\emptyset$ by the definition of $I'_{8}$, and thus $d(x,y)=4$. Let $P=xx_1x_2x_3y$ be a shortest $(x,y)$-path. Then $x_1\in B_1$, $x_2\in A_1$ and $x_3\in (\cup_{i=1}^3 I'_i)\cup I'_{41}\cup I'_{51}\cup I'_{61}\cup I'_{7}$. By Orientation \ref{cons12} and Claim \ref{cla14}, we have $x_3\rightarrow y$ and $\partial(v,x_3)\leq g^*+2$, which implies $\partial(v,y)\leq g^*+3$. Hence $\partial(x,y)\leq g^*+12$ by Table \ref{Table8-1}.

\vspace{0.1\baselineskip}
{\noindent \rm \bf Case 6.} $x\in J$ and $y\in K'$.
\vspace{0.1\baselineskip}

If $x\notin J_5$, then $\partial(x,u)\leq 7$ by Claims \ref{cla1}, \ref{cla7} and \ref{cla10}, which implies $\partial(x,y)\leq 17$ by Table \ref{Table8-2}. If $y\notin K'_9$, then $\partial(v,y)\leq g^*+4$ by Claim \ref{cla12}, which implies $\partial(x,y)\leq g^*+13$.

Now we assume $x\in J_5$ and $y\in K'_9$. Then $N(x)\cap (J_1\cup J_2\cup L_1)=\emptyset$ and $N(y)\cap (K\cup (I-I_{62}^{(3)}))=\emptyset$, and thus $d(x,y)>4$, a contradiction with $d(G)=4$.

\vspace{0.1\baselineskip}
{\noindent \rm \bf Case 7.} $x\in B$ and $y\in M'$.
\vspace{0.1\baselineskip}

If $x\notin  B_8$, then by Claims \ref{cla1}, \ref{cla7}, \ref{cla7_1} and \ref{cla10}, $\partial(x,u)\leq 8$, which implies $\partial(x,y)\leq 17$ by Table \ref{Table8-2}. If $y\notin M'_2$, then by Claim \ref{cla6}, we have $\partial(v,y)\leq g^*$, which implies $\partial(x,y)\leq g^*+10$. 

Now we assume $x\in B_8$ and $y\in M'_2$. Then $N(x)\cap (A\cup S_{2,2}\cup B_1\cup (\cup_{i=1}^4 J_i))=\emptyset$, and $N(y)\cap (K\cup L)=\emptyset$ by Proposition \ref{pro1}. Then $d(x,y)=4$. Let $P=xx_1x_2x_3y$ be a shortest $(x,y)$-path. Then $x_1\in  J_5\cup J'\cup (\cup_{i=2}^5 B_i)$, $x_2\in (L-L_1)\cup S_{2,2}\cup J_1\cup J_2$ and $x_3\in M\cup S_{3,3}$.

If $x_1\in J_5$, then $x_2\in L-L_1$. By the definition of $X'$ and Proposition \ref{pro1}, we have $x_3\in X\cup M$. By Orientation \ref{cons1}, $x\rightarrow x_1\rightarrow x_2\rightarrow x_3$, and thus $\partial(x,x_3)\leq 3$. For $x_3\in X$, we have $\partial(x,u)\leq \partial(x,x_3)+\partial(x_3,u)\leq 3+5=8$ by Claim \ref{cla4}, which implies $\partial(x,y)\leq 17$. For $x_3\in M$, we have $y\in M'_{21}$, and $x_3\rightarrow y$ by Orientation \ref{cons6}, and thus $\partial(x,y)\leq \partial(x,x_3)+\partial(x_3,y)\leq 4$.

If $x_1\in J'$, then $x_2\in J_1\cup J_2$, and thus $x_1\in J'_1\cup J'_2$. By Orientation \ref{cons81}, we have $x\to x_1$. By Claim \ref{cla9_1}, we obtain $\partial(x,u)\leq \partial(x,x_1)+\partial(x_1,u)\leq  2g^*-2$, which yields $\partial(x,y)\leq 2g^*+7\leq g^*+12$.

If $x_1\in \cup_{i=2}^{5}B_i$, then $x\rightarrow x_1$ and $\partial(x_1,u)\leq 5$ by Orientation \ref{cons1}, Claims \ref{cla1} and \ref{cla7}. Thus $\partial(x,u)\leq 6$, which implies $\partial(x,y)\leq 15$.	{\hfill $\blacksquare$\par}

\vspace{0.3\baselineskip}

{\noindent\textbf{\bf {Proof of (ii) of Claim \ref{cla19}.}}} By (i) of Claim \ref{cla19}, we may assume $\{x,y\}\cap (B\cup J)=\emptyset$. By Tables \ref{Table8-1} and \ref{Table8-2}, we have $\partial(x,y)\leq \partial(x,u)+\partial(u,v)+\partial(v,y)\leq g^*+13$ if $x\in I'$, or $y\in J'$, or $y\in I'$ and $x\in A\cup A'\cup B'\cup I\cup K\cup L\cup  K'\cup L'\cup S_{2,2}\cup S_{3,3}\cup M$, or $x\in J'$ and $y\in A'\cup B'\cup K\cup L\cup L'\cup S_{2,2}$. Next we consider $y\in I'$ and $x\in  J'\cup M'$, and $x\in J'$ and $y\in A\cup I\cup K'\cup S_{3,3}\cup M\cup  M'$.

\vspace{0.1\baselineskip}
{\noindent \rm \bf Case 1.} $y\in I'$ and $x\in J'$.
\vspace{0.1\baselineskip}

If $x\notin J'_{32}\cup J'_{42}\cup J'_6$, then by Claim \ref{cla9_1}, we have $\partial(x,u)\leq 7$, and thus $\partial(x,y)\leq g^*+13$. 

Now we assume $x\in J'_{32}\cup J'_{42}\cup J'_6$. Then $N(x)\cap ((\cup_{i=1}^6 B_i)\cup (\cup_{i=1}^3 J_i))=\emptyset$, and thus $d(x,y)=4$. Let $P=xx_1x_2x_3y$ be a shortest $(x,y)$-path. Then $x_1\in J'\cup (\cup_{i=4}^6 J_i)$, $x_2\in B_1\cup  J_1$ and $x_3\in A_1\cup I_1$. If $x_1\in \cup_{i=4}^6 J_i$, then $x_2\in B_1$ and $x_3\in A_1$, and thus $x\in J'_{32}$ and $y\notin  I'_{42}\cup I'_{52}\cup I'_{62}\cup I'_8$ by $N(x)\cap (\cup_{i=4}^6 J_i)\neq\emptyset$ and $N(y)\cap A_1\neq\emptyset$, which implies $\partial(x,u)\leq 9$ and $\partial(v,y)\leq g^*+2$ by Claims \ref{cla9_1}, \ref{cla9.5} and \ref{cla14}. Hence $\partial(x,y)\leq g^*+12$. If $x_1\in J'$, then $x_2\in B_1\cup J_1$ and $x_3\in A_1\cup I_1$.

If $x_3\in I_1$, then $x_2\in J_1$, and $y\in \cup_{i=1}^4 I'_i$ since $N(y)\cap I_1\neq\emptyset$, and thus $\partial(v,y)\leq g^*+3$ by Claims \ref{cla9.5} and \ref{cla14}. Since $x_1\in J'$ and $N(x_1)\cap J_1\neq\emptyset$, it follows that $x_1\in J_1'\cup J_2'$. By Orientation \ref{cons81}, we have $x\to x_1$, and by Claim \ref{cla9_1}, $\partial(x_1,u)\leq 2g^*-3$. Hence $\partial(x,u)\leq 2g^*-2$ and $\partial(x,y)\leq 3g^*+2\leq g^*+12$.

If $x_3\in A_1$, then $x_2\in B_1$ and $y\notin I'_{42}\cup I'_{52}\cup I'_{62}\cup I'_8$, which implies $\partial(v,y)\leq g^*+2$. Since $x_2\in B_1$, we have $x_1\in J'_1\cup J'_2\cup J'_{31}\cup J'_{41}\cup J'_5$, and thus $x\rightarrow x_1$ and $\partial(x_1,u)\leq 7$, which implies $\partial(x,u)\leq 8$. Hence $\partial(x,y)\leq g^*+11$.

\vspace{0.1\baselineskip}
{\noindent \rm \bf Case 2.} $y\in I'$ and $x\in M'$.
\vspace{0.1\baselineskip}

If $y\notin I'_{83}$, then by Claims \ref{cla9.5}, \ref{cla14} and the proof of Claim \ref{cla14}, $\partial(v,y)\leq g^*+4$, and thus $\partial(x,y)\leq 2g^*+8$ by Table \ref{Table8-2}. 

Now we assume $y\in I'_{83}$.  Then $N(y)\cap ((\cup_{i=1}^4 A_i)\cup I)=\emptyset$ and $d(x,y)=4$. Let $P=xx_1x_2x_3y$ be a shortest $(x,y)$-path. Then $x_1\in X$, $x_2\in I_1\cup I_2$ and $x_3\in I'\cup A_5$. 

If $x_3\in I'$, then $x_3\in \cup_{i=1}^4 I'_i$, and thus $x_3\rightarrow y$ by Orientation \ref{cons12}. If $x_3\notin I'_{42}$, then $\partial(v,x_3)\leq g^*+2$ by Claims \ref{cla9.5} and \ref{cla14}, which implies $\partial(v,y)\leq g^*+3$ and $\partial(x,y)\leq 2g^*+7$. If $x_3\in I'_{42}$, then $x_2\rightarrow x_3\rightarrow y$ and $\partial(v,x_2)\leq 4$ by Orientation \ref{cons12}, Claims \ref{cla1} and \ref{cla7}, which implies $\partial(v,y)\leq 6$ and $\partial(x,y)\leq g^*+10$.

If $x_3\in A_5$, then $x_3\rightarrow y$ or $y\rightarrow x_3$ by (ii) of Orientation \ref{cons12}. If $x_3\rightarrow y$, then $\partial(v,y)\leq \partial(v,x_3)+\partial(x_3,y)\leq 5+1=6$ by Claim \ref{cla7}, which implies $\partial(x,y)\leq g^*+10$. If $y\rightarrow x_3$, then by Orientation \ref{cons12} and Definition \ref{def2}, there exists $y'\in I'_{83}$ such that $\cup_{i=5}^{9}A_i\rightarrow y'\rightarrow y\rightarrow\cup_{i=5}^{9}A_i$. Then $d(x,y')=4$, and let $Q=xy_1y_2y_3y'$ be a shortest $(x,y')$-path. Similarly, we have $y_1\in X$, $y_2\in I_1\cup I_2$ and $y_3\in I'\cup A_5$. If $y_3\in I'$, then $\partial(x,y')\leq 2g^*+7$ by the above arguments, and thus $\partial(x,y)\leq 2g^*+8\leq g^*+13$. If $y_3\in A_5$, then $y_3\to y'$, and similarly $\partial(x,y')\leq g^*+10$, which implies $\partial(x,y)\leq g^* +11$.

\vspace{0.1\baselineskip}
{\noindent \rm \bf Case 3.} $x\in J'$ and $y\in A$.
\vspace{0.1\baselineskip}

If $x\notin J'_{42}\cup J'_6$, then by Claim \ref{cla9_1}, we have $\partial(x,u)\leq 9$, and thus $\partial(x,y)\leq g^*+13$ by Table \ref{Table8-1}. If $y\notin A_7\cup A_8\cup A_9$, then by Proposition \ref{pro3}, we have $\partial(v,y)\leq 5$, and thus $\partial(x,y)\leq 17\leq g^*+13$ by Table \ref{Table8-1}. 

Now we assume $x\in J'_{42}\cup J'_6$ and $y\in A_7\cup A_8\cup A_9$. Then $N(x)\cap ((\cup_{i=1}^{6}B_i)\cup J)=\emptyset$ and $N(y)\cap (B\cup S_{2,2}\cup A_1\cup I\cup I')\subseteq (I_4\cup I_5\cup I_6\cup I')$. Let $y'\in N(y)\cap ((I_4\cup I_5\cup I_6)\cup I')$. Then $d(x,y')=4$, and let $P=xx_1x_2x_3y'$ be a shortest $(x,y')$ path. Thus we have $x_1\in J'$, $x_2\in B_1\cup J_1$ and $x_3\in A_1\cup I_1$. Since $x_2\in B_1\cup J_1$, we have $x_1\in J'_1\cup J'_2\cup J'_{31}\cup J'_{41}\cup J'_5$. Then $x\rightarrow x_1$ and $\partial(x_1,u)\leq 7$ by Orientation \ref{cons81} and Claim \ref{cla9_1}, and thus $\partial(x,u)\leq 8$, which implies $\partial(x,y)\leq g^*+12$.

\vspace{0.1\baselineskip}
{\noindent \rm \bf Case 4.} $x\in J'$ and $y\in I$.
\vspace{0.1\baselineskip}

If $y\notin I_5\cup I_{61}^{(22)}\cup I_{62}^{(2)}\cup I_{62}^{(3)}$, then by Claims \ref{cla1}, \ref{cla7} and \ref{cla9}, we have $\partial(v,y)\leq 5$, and thus $\partial(x,y)\leq 17$. If $x\notin J'_{32}\cup J'_{42}\cup J'_{6}$, then by Claim \ref{cla9_1}, we have $\partial(x,u)\leq 7$, and thus $\partial(x,y)\leq 17$.	

Now we assume $x\in J'_{32}\cup J'_{42}\cup J'_{6}$ and $y\in I_5\cup I_{61}^{(22)}\cup I_{62}^{(2)}\cup I_{62}^{(3)} $. Then $N(x)\cap ((\cup_{i=1}^6 B_i)\cup (\cup_{i=1}^3 J_i))=\emptyset$. Then $d(x,y)=4$ and let $P=xx_1x_2x_3y$ be a shortest $(x,y)$-path, and thus $x_1\in J'\cup (\cup_{i=4}^6 J_i)$, $x_2\in B_1$ and $x_3\in A_1$. Since $x_3\in A_1$, we have $y\in I_5\cup I_{61}^{(22)}$ by the definition of $I_{61}$, which implies $\partial(v,y)\leq g^*+2$ by Proposition \ref{pro1.5}. 

If $x\in J'_{32}\cup J'_{42}$, then $\partial(x,u)\leq 10$ by Claim \ref{cla9_1}, and thus $\partial(x,y)\leq g^*+13$. If $x\in J'_6$, then $x_1\in J'$ by the definition of $J'_6$. Since $x_2\in B_1$, we have $x_1\in J'_1\cup J'_2\cup J'_{31}\cup J'_{41}\cup J'_5$, which implies $\partial(x_1,u)\leq 7$ and $x\rightarrow x_1$ by Orientation \ref{cons81} and Claim \ref{cla9_1}. Hence $\partial(x,u)\leq 8$, and $\partial(x,y)\leq g^*+11$.

\vspace{0.1\baselineskip}
{\noindent \rm \bf Case 5.} $x\in J'$ and $y\in K'$.
\vspace{0.1\baselineskip}

If $x\notin J'_{32}\cup J'_{42}\cup J'_6$, then by Claim \ref{cla9_1}, we have $\partial(x,u)\leq 7$, and thus $\partial(x,y)\leq 17$.

Now we assume $x\in J'_{32}\cup J'_{42}\cup J'_6$. Then $N(x)\cap ((\cup_{i=1}^6 B_i)\cup (\cup_{i=1}^3 J_i))=\emptyset$, and thus $d(x,y)=4$. Let $P=xx_1x_2x_3y$ be a shortest $(x,y)$-path. Then $x_1\in J_4\cup J'$, $x_2\in L_1\cup J_1$ and $x_3\in K_1\cup I_1$. If $x_1\in J_4$, then $x\in J'_{32}$ and $x\rightarrow x_1$ by Orientation \ref{cons81}, which implies $\partial(x,u)\leq \partial(x,x_1)+\partial(x_1,u)\leq 1+5=6$ by Claim \ref{cla7}. Hence $\partial(x,y)\leq 16$. If $x_1\in J'$, then $x_2\in J_1$, and thus $x_1\in J'_1\cup J'_2$. By Orientation \ref{cons81} and Claim \ref{cla9_1}, $x\rightarrow x_1$ and $\partial(x_1,u)\leq 2g^*-3$, which implies $\partial(x,u)\leq 2g^*-2$. Therefore, $\partial(x,y)\leq 2g^*+8\leq g^*+13$.

\vspace{0.1\baselineskip}
{\noindent \rm \bf Case 6.} $x\in J'$ and $y\in S_{3,3}$.
\vspace{0.1\baselineskip}

If $x\notin J'_{63}$, then $\partial(x,u)\leq 10$ by Claim \ref{cla9_1} and its proof, which implies $\partial(x,y)\leq 17$. If $y\notin X'_2\cup X'_3$, then $\partial(v,y)\leq g^*+1$ by Claims \ref{cla4} and \ref{cla5}, which implies $\partial(x,y)\leq g^*+13$.

Now we assume $x\in J'_{63}$ and $y\in X'_2\cup X'_3$. Then $N(x)\cap ((\cup_{i=1}^6 B_i)\cup J)=\emptyset$, and thus $d(x,y)=4$. Let $P=xx_1x_2x_3y$ be a shortest $(x,y)$-path. Thus $x_1\in (\cup_{i=7}^{10} B_i)\cup J'$, $x_2\in B_2\cup J_1\cup J_2$ and $x_3\in S_{2,2}\cup X$. If $x_1\in J'$, then $x_1\in J'_1\cup J'_2\cup J'_{31}\cup J'_{41}\cup J'_5$ since $x_2\in B_2\cup J_1\cup J_2$, and thus $\partial(x_1,u)\leq 7$ and $x\rightarrow x_1$ by Claim \ref{cla9_1} and Orientation \ref{cons81}, which implies $\partial(x,u)\leq 8$. Hence $\partial(x,y)\leq 15$.

If $x_1\in \cup_{i=7}^{10} B_i$, then $x_2\in B_2$. For $x\rightarrow x_1$ and $x_1\in \cup_{i=7}^{9} B_i$, we have $x\rightarrow x_1\rightarrow x_2$ and $\partial(x_2,u)\leq 3$ by Orientation \ref{cons1} and Claim \ref{cla1}, and thus $\partial(x,u)\leq 5$, which implies $\partial(x,y)\leq 12$. For $x\rightarrow x_1$ and $x_1\in B_{10}$, we have $\partial(x,u)\leq \partial(x,x_1)+\partial(x_1,u)\leq 2g^*-1$ by Claim \ref{cla7_1}, which implies $\partial(x,y)\leq 2g^*+6$.  For $x_1\rightarrow x$, by Orientation \ref{cons81} and Definition \ref{def2}, there exists $x'\in J'_{63}$ such that $\cup_{i=7}^{10} B_i\rightarrow x\rightarrow x'\rightarrow\cup_{i=7}^{10} B_i$. Then $d(x',y)=4$, and let $Q=x'y_1y_2y_3y$ be a shortest $(x',y)$-path. Similarly, we have $y_1\in (\cup_{i=7}^{10} B_i)\cup J'$, $y_2\in B_2\cup J_1\cup J_2$ and $y_3\in S_{2,2}\cup X$. If $y_1\in J'$, then $\partial(x',y)\leq 15$ by the above arguments, and thus $\partial(x,y)\leq 16$. If $y_1\in \cup_{i=7}^{10} B_i$, then $x'\rightarrow y_1$, and similarly $\partial(x',y)\leq 2g^*+6$, which implies $\partial(x,y)\leq 2g^*+7\leq g^*+12$.

\vspace{0.1\baselineskip}
{\noindent \rm \bf Case 7.} $x\in J'$ and $y\in M$.
\vspace{0.1\baselineskip}

If $x\notin J'_{42}\cup J'_6$, then by Claim \ref{cla9_1}, $\partial(x,u)\leq 9$, and thus $\partial(x,y)\leq 17$. If $y\in M_1\cup M_2\cup M_{31}$, then $\partial(v,y)\leq 5$ by Claim \ref{cla6}, which implies $\partial(x,y)\leq 17$.

Now we assume $x\in  J'_{42}\cup J'_6$ and $y\in M_{32}$. Then $N(x)\cap (J\cup (\cup_{i=1}^6 B_i))=\emptyset$ and $N(y)\cap (K\cup L)=\emptyset$, and thus $d(x,y)=4$. Let $P=xx_1x_2x_3y$ be a shortest $(x,y)$-path. Then $x_1\in J'$, $x_2\in J_1\cup J_2$ and $x_3\in X$, and thus $x_1\in J'_1\cup J'_2$. Then $x\rightarrow x_1$ and $\partial(x_1,u)\leq 2g^*-3$ by Orientation \ref{cons81} and Claim \ref{cla9_1}, and thus $\partial(x,u)\leq 2g^*-2$. Hence $\partial(x,y)\leq 2g^*+6$ by Table \ref{Table8-2}.

\vspace{0.1\baselineskip}
{\noindent \rm \bf Case 8.} $x\in J'$ and $y\in M'$.
\vspace{0.1\baselineskip}

If $x\notin J'_{32}\cup J'_{42}\cup J'_6$, then by Claim \ref{cla9_1}, $\partial(x,u)\leq 7$, and thus $\partial(x,y)\leq 16$. If $y\in M'_1$, then by Claim \ref{cla6}, $\partial(v,y)\leq g^*$, and thus $\partial(x,y)\leq g^*+12$.

Now we assume $x\in J'_{32}\cup  J'_{42}\cup J'_6$ and $y\in M'_2$. Then $N(x)\cap ((\cup_{i=1}^3 J_i)\cup (\cup_{i=1}^6 B_i))=\emptyset$, and thus $d(x,y)=4$. Let $P=xx_1x_2x_3y$ be a shortest $(x,y)$-path. Then $x_1\in (J_4\cup J_5)\cup J'$, $x_2\in L\cup J_1\cup J_2$ and $x_3\in M\cup X$. 

If $x_1\in J'$, then $x_1\in J'_1\cup J'_2$, and thus $x\rightarrow x_1$ and $\partial(x_1,u)\leq 2g^*-3$ by Orientation \ref{cons81} and Claim \ref{cla9_1}. Hence $\partial(x,u)\leq 2g^*-2$, and $\partial(x,y)\leq 2g^*+7$.

If $x_1\in J_4\cup J_5$, then $x\in J'_{32}$ and $x_2\in L\cup J_2$. By Orientations \ref{cons1} and \ref{cons81}, $x\rightarrow x_1\rightarrow x_2\rightarrow x_3$, and thus $\partial(x,x_3)\leq 3$. For $x_3\in X$, we have $\partial(x,u)\leq \partial(x,x_3)+\partial(x_3,u)\leq 3+5=8$ by Claim \ref{cla4}, which implies $\partial(x,y)\leq 17$. For $x_3\in M$, we have $y\in M'_{21}$, and $x_3\rightarrow y$ by Orientation \ref{cons6}, and thus $\partial(x,y)\leq \partial(x,x_3)+\partial(x_3,y)\leq 4$.
{\hfill $\blacksquare$\par}
	
	\section*{Funding}
	This work is supported by the National Natural Science Foundation of China (Grant Nos. 12371347, 12271337, 12401447).

	\vspace{0.5em}
	
	\bibliographystyle{unsrt}

\end{document}